\documentclass[letterpaper]{amsart}

\usepackage[utf8]{inputenc}
\usepackage{amsmath}
\usepackage{amssymb}
\usepackage{amsthm}
\usepackage{tikz-cd}
\usepackage{mathrsfs}
\usepackage{enumitem}
\usepackage[alphabetic]{amsrefs}
\usepackage{tikz}
\usepackage{float}
\usepackage{hyperref}
\usepackage{multirow}

\theoremstyle{plain}
\newtheorem{teo}{Theorem}[section]
\newtheorem{prop}[teo]{Proposition}
\newtheorem{lemma}[teo]{Lemma}
\newtheorem{cor}[teo]{Corollary}
\theoremstyle{definition}

\newtheorem{defin}[teo]{Definition}

\theoremstyle{remark}
\newtheorem{remark}[teo]{Remark}
\newtheorem{claim}[teo]{Claim}
\numberwithin{equation}{section}

\newcommand{\A}{\mathcal{A}}
\newcommand{\afrak}{\mathfrak{a}}
\newcommand{\C}{\mathbb{C}}
\newcommand{\D}{\mathcal{D}}
\newcommand{\dbf}{\mathbf{d}}
\newcommand{\DC}{\mathrm{D}}
\newcommand{\Ftors}{\mathscr{F}}
\newcommand{\K}{\mathcal{K}}
\newcommand{\nbf}{\mathbf{n}}
\renewcommand{\O}{\mathscr{O}}
\renewcommand{\P}{\mathbb{P}}
\newcommand{\Q}{\mathbb{Q}}
\newcommand{\R}{\mathbb{R}}
\newcommand{\rbf}{\mathbf{r}}
\newcommand{\T}{\mathcal{T}}
\newcommand{\Ttors}{\mathscr{T}}
\newcommand{\vbf}{\mathbf{v}}
\newcommand{\Z}{\mathbb{Z}}

\let\Im\relax
\let\Re\relax
\DeclareMathOperator{\Bl}{Bl}
\DeclareMathOperator{\Coh}{Coh}
\DeclareMathOperator{\coker}{coker}
\DeclareMathOperator{\Ext}{Ext}
\DeclareMathOperator{\Hom}{Hom}
\DeclareMathOperator{\Im}{Im} 
\DeclareMathOperator{\Ku}{Ku}
\DeclareMathOperator{\Pic}{Pic}
\DeclareMathOperator{\Re}{Re}
\DeclareMathOperator{\rk}{rk}
\DeclareMathOperator{\Stab}{Stab}
\DeclareMathOperator{\Sym}{Sym}

\newcommand{\ev}{\mathrm{ev}}
\newcommand{\GL}{\mathrm{GL}}
\newcommand{\id}{\mathrm{id}}
\newcommand{\num}{\mathrm{num}}
\newcommand{\Vect}{\mathrm{Vect}}

\newcommand{\abs}[1]{\left\lvert#1\right\rvert}

\begin{document}

\title{Weak stability conditions on coherent systems of genus four curves}
\author{Nicolás Vilches}
\address{Department of Mathematics, Columbia University, 2990 Broadway, New York, NY 10027, USA}
\email{nivilches@math.columbia.edu}
\begin{abstract}
The derived category of coherent systems is an interesting triangulated category associated with a smooth, projective curve $C$. These categories admit Bridgeland stability conditions, as recently shown by Feyzbakhsh and Novik. Their construction depends explicitly on the higher rank Brill--Noether theory of $C$.

In this short note, we study the Feyzbakhsh--Novik stability conditions for a general curve of genus four. We show that these stability conditions degenerate to a stability condition on the Kuznetsov component of the corresponding nodal cubic threefold, using a result of Alexeev--Kuznetsov.
\end{abstract}
\maketitle

\section{Introduction}

The study of coherent systems on curves has been an active area of research in the last thirty years. A \emph{coherent system} on a smooth, projective curve $C$ consists of a pair $(V, E)$, where $E$ is a vector bundle on $C$ and $V \subset H^0(C, E)$. In a similar way to classical slope-stability, there is a notion of \emph{$\alpha$-stability} on coherent systems, depending on an extra parameter $\alpha$. We refer to \cite{FN25}*{\textsection 1.1} and the references therein for an historic summary. 

We are interested of exploring the derived category of coherent systems, following the foundational works \citelist{\cite{AK25} \cite{FN25}}. The first observation is that the category of coherent systems is not abelian; instead, one should relax the condition $V \subset H^0(C, E)$ to a morphism $V \to H^0(C, E)$ (possibly zero). These generalized coherent systems form an abelian category $\T_C$, which gives us a derived category $\DC^b(\T_C)$

The main result of \cite{FN25} ensures that the category $\DC^b(\T_C)$ admits \emph{Bridgeland stability conditions} (cf. \cite{JRLVM25}). One of the constructions of \cite{FN25} is given by the theorem below; see also our discussion in Section \ref{sec:stab}.

\begin{teo}[Feyzbakhsh--Novik, \cite{FN25}*{\textsection 3}] \label{teo:intro_FN}
Let
\[ \Phi_C(x) = \lim_{\varepsilon \to 0}\sup\{ h^0(C, E)/\rk(E): E\text{ semistable, } \mu(E) \in (x-\varepsilon, x+\varepsilon)\}. \]
Then there exists a family of stability conditions on $\DC^b(\T_C)$, depending of two parameters $b, w$ satisfying $w>\Phi_C(b)$.
\end{teo}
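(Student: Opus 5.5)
The plan is to build the stability condition in the standard way: choose a heart of a bounded t-structure on $\DC^b(\T_C)$, define a central charge that depends on $(b,w)$, and then verify positivity, the Harder--Narasimhan property and the support property.

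\textbf{Setup and tilted heart.} Recall that $\T_C$ has two natural kinds of objects. One kind is a sheaf $E$ on $C$, regarded as $(0,E)$. The other is a vector space $V$, regarded as $(V,0)$ and placed in a shifted position. A natural numerical invariant is the triple $(\dim V, \rk E, \deg E)$. For the parameter $b$, I would tilt the slope stability of the sheaf part at $\mu=b$, in analogy with the tilt construction on surfaces. Concretely, let $\Ttors_b$ be the extension closure of $(V,0)$ and of the coherent systems whose sheaf part has all HN factors of slope $>b$. Let $\Ftors_b$ be generated by those whose sheaf factors have slope $\le b$, with the appropriate condition on $V$. The heart is then $\A_b=\langle \Ftors_b[1],\Ttors_b\rangle$. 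The central charge I would use is
\[ Z_{b,w}(V,E) = -\deg E + b\,\rk E + i\,(w\,\rk E - \dim V), \]
possibly after adjusting signs or a shift, chosen so that the generators $(V,0)$ and $(0,\O_x)$ land in the closed upper half plane. The paper's conventions may differ by an affine change of the parameters.

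\textbf{Positivity.} This is the key step. Take $E$ semistable of slope near $b$ and the system $(H^0(E),E)$ or its shift. Its central charge has real part near $0$, so positivity requires the imaginary part to satisfy $w\,\rk E - h^0(E)>0$ (up to sign). This is exactly the condition $w>h^0(E)/\rk E$. Taking the supremum over all such $E$, and using the limit that defines $\Phi_C(b)$, this becomes the hypothesis $w>\Phi_C(b)$. In general I would reduce an object of $\A_b$ with $\Re Z=0$ to a filtration by pieces of this form, using HN filtrations of the sheaf part and the evaluation map $V\to H^0(E)$. The worst case for such a piece is when $V$ is as large as possible, that is $V=H^0(E)$. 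Objects supported on torsion, or of the form $(V,0)$, are handled directly. The openness in the definition of $\Phi_C$, via the $\varepsilon$-limit, is what lets the inequality stay strict uniformly near $b$.

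\textbf{HN property and support property.} Noetherianity of the heart should follow from the standard tilting argument, since $\T_C$ is noetherian and the data are discrete. Alternatively, when $b$ is rational, the image of $Z$ is discrete, and the general case follows by deformation. Because $\Phi_C$ is upper semicontinuous, the set $\{w>\Phi_C(b)\}$ is open. For the support property I would take a quadratic form of the type $Q=(w'\rk-\dim V)^2$ plus a multiple of a Bogomolov-type term, built from $\Phi_C$ and bounded below by a piecewise-linear function. The main obstacle is this step together with positivity. One has to control $\dim V$ for semistable objects uniformly, and Clifford-type bounds ($\Phi_C(x)\le x/2+1$ for $0\le x\le 2g-2$, and Riemann--Roch outside that range) give the needed finite-slope control. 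With the support property in hand, Bridgeland's deformation theorem produces the continuous two-parameter family.
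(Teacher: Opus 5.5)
Your overall route matches the paper's sketch in Section \ref{sec:stab}, which follows \cite{FN25}: tilt $\T_C$ at $\mu=b$, reduce positivity to $w>h^0(E)/\rk E$ for semistable $E$ of slope $b$, pass to $\Phi_C$ by upper semicontinuity, then prove the support property and deform. Two steps fail as written, however.

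\textbf{The central charge does not fit your heart.} Your $Z$ is $i$ times the paper's $Z_{b,w}=(w\rbf-\nbf)+i(\dbf-b\rbf)$. As a result it does not send $\langle\Ftors_b[1],\Ttors_b\rangle$ into the upper half plane. The object $(\C,0;0)\in\Ttors_b$ goes to $-i$. Any $(V,E;\phi)\in\Ttors_b$ with $\dim V>w\rk E$ lands in the lower half plane. The key object $(H^0(E),E;\ev_E)[1]$ with $\mu(E)=b$ goes to $-i(w\rk E-h^0(E))$, so the crucial inequality comes out reversed. No sign change, shift, or affine reparametrization of $(b,w)$ repairs this. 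On the slope-$b$ tilt the imaginary part must be a positive multiple of $\dbf-b\rbf$, which is the quantity that is $\ge0$ on $\Ttors^b$ and $\le0$ on $\Ftors^b$; the term $w\rbf-\nbf$ belongs in the real part. With that correction, $(V,0;0)$ sits unshifted in $\A(b)$ with phase $1$, and positivity is a direct check. If $\Im Z_{b,w}(T)=0$, then $H^0(T)=(V,0;0)$ and $H^{-1}(T)=(V',E';\phi')$ is $\mu$-semistable of slope $b$ with $V'\hookrightarrow H^0(E')$; for irrational $b$ this forces $H^{-1}(T)=0$. Hence $\Re Z_{b,w}(T)=-\dim V+\dim V'-w\rk E'\le-\dim V-(w-\varphi_C(b))\rk E'<0$ for $T\neq0$.

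\textbf{The support property is not supplied.} You rightly call it the main obstacle, but the form you sketch misses the mechanism. On a curve there is no Bogomolov inequality; its role is played by the Brill--Noether bound $\nbf\le\Phi_C(\dbf/\rbf)\,\rbf$. In the paper (Lemma \ref{lemma:stab_quad}, after \cite{FN25}*{Lemma 3.7}) one chooses $s,t>0$ with $s(x-b_0)^2+w_0-t>\Phi_C(x)$ for $x\neq b_0$ and $w_0-t>\Phi_C(b_0)$. This is possible by upper semicontinuity at $b_0$ and the linear bounds of Lemma \ref{lemma:bn_prephi} elsewhere. One then sets $Q(r,d,n)=s(d-b_0r)^2+r^2(w_0-t)-nr$. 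So $Q(\vbf)\ge0$ says exactly that $\nbf/\rbf$ lies under this parabola at $\mu=\dbf/\rbf$, and $Q(1,b_0,w)<0$ gives negative definiteness on $\ker Z_{b_0,w}$ for $w\ge w_0$.

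The real work is showing $Q\ge0$ on every $\sigma_{b,w}$-semistable object, in two cases:
\begin{enumerate}
\item Objects that stay semistable for all $w'\gg0$ are either $(V,E;\phi)$, or have $H^0=(\C^m,0;0)$ and $H^{-1}=(V,E;\phi)$. In both cases $E$ is a semistable bundle and $\dim V\le h^0(E)\le\Phi_C(\mu(E))\rk E$, so the parabola bound gives $Q\ge0$ directly.
\item Any other semistable object becomes strictly semistable at a first wall $w_1>w$. There $Q$ is negative definite on $\ker Z_{b,w_1}$, and the Jordan--H\"older factors have aligned central charges and strictly smaller $\Im Z$, which is discrete for rational $b$. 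So nonnegativity propagates by induction.
\end{enumerate}
Without this large-volume-plus-wall-crossing argument, the uniform control of $\dim V$ you invoke is not established. Consequently the deformation to a continuous two-parameter family has nothing to rest on.
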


We point out that the function $\Phi_C$ involves the higher rank Brill--Noether theory of $C$. As the next result highlights, the structure of $\DC^b(\T_C)$ is heavily controlled by the Brill--Noether theory of $C$. We refer to the discussion of these two results in Sections \ref{sec:coh} and \ref{sec:genfour} for more details.

\begin{teo}[Alexeev--Kuznetsov] \label{teo:intro_AK}
\begin{enumerate}
\item (\cite{AK25}*{Proposition 3.12}) If $L$ is a \emph{Brill--Noether--Petri extremal} line bundle on $C$, then there is an associated exceptional object $\afrak(L) \in \DC^b(\T_C)$.
\item (\cite{AK25}*{Corollary A.2}) Let $Y$ be a nodal cubic threefold, and let $C$ be the genus 4 curve parametrizing lines passing through the node of $Y$. Then, the Kuznetsov component $\Ku(Y)$ of $Y$ is equivalent to the Verdier quotient of $\DC^b(\T_C)$ by the two objects $\afrak(L_1), \afrak(L_2)$ associated to the two trigonal line bundles on $C$. 
\end{enumerate}
\end{teo}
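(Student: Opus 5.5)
The statement has two independent parts, and I would prove them separately: part (1) by a direct Ext computation in $\T_C$, part (2) by geometry of the blow-up of the node.

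\textbf{Part (1).} I take $\afrak(L)$ to be the coherent system $(H^0(C,L), L)$, possibly shifted or dualized depending on conventions. The main tool is the long exact sequence computing morphisms between objects of $\T_C$: for $(V,E),(W,F)$ there is a triangle relating $\mathrm{RHom}_{\T_C}((V,E),(W,F))$ to $\Hom(V,W)\oplus \mathrm{RHom}_C(E,F)$ and $\Hom(V, R\Gamma(F))$. Applied to $(V,E)=(W,F)=(H^0(L),L)$ it reads
\[ 0\to \Hom_{\T_C}\to \mathrm{End}(H^0L)\oplus H^0(\O_C)\to \mathrm{End}(H^0L)\to \Ext^1_{\T_C}\to H^1(\O_C)\xrightarrow{\ \delta\ } \Hom(H^0L,H^1L)\to \Ext^2_{\T_C}\to 0. \]
The first map is surjective with one-dimensional kernel, so $\Hom=\C$. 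The connecting map $\delta$ should be Serre dual to the Petri map
\[ \mu\colon H^0(L)\otimes H^0(K_C-L)\to H^0(K_C). \]
The BNP extremal hypothesis says exactly that $\mu$ is an isomorphism, equivalently $g=(r+1)(g-d+r)$ with $\mu$ injective. Hence $\delta$ is an isomorphism and $\Ext^1=\Ext^2=0$. The category $\T_C$ has global dimension $2$, so there are no higher Exts, and $\afrak(L)$ is exceptional. I would carefully check that $\delta$ is dual to $\mu$, which is standard deformation theory of the pair.

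\textbf{Part (2).} I use the classical description of the nodal cubic. Projecting from the node $p$ gives $\Bl_p Y\cong \Bl_C\P^3$, where $C\subset\P^3$ is the $(2,3)$ complete intersection. The exceptional quadric over $p$ maps isomorphically onto the quadric $Q\supset C$, and for general $C$ the quadric $Q$ is smooth. Its two rulings cut out the two $g^1_3$'s, which are $L_1$ and $L_2$.

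The steps, in order, are:
\begin{enumerate}
\item Write Orlov's decomposition $\DC^b(\Bl_C\P^3)=\langle \DC^b(C),\O,\O(1),\O(2),\O(3)\rangle$.
\item Write Kuznetsov's categorical resolution of $Y$ via $\Bl_pY$, whose semiorthogonal complement of $\langle\O,\O(H)\rangle$ is a categorical resolution $\widetilde{\Ku}(Y)$ of $\Ku(Y)$.
\item Apply a sequence of mutations to identify $\widetilde{\Ku}(Y)$ with a gluing of $\DC^b(C)$ with one exceptional object. This gluing is the recollement presentation of $\DC^b(\T_C)$, namely $\T_C$ as $\Vect$ glued to $\Coh(C)$ via $R\Gamma$.
\item Show that the kernel of the localization $\widetilde{\Ku}(Y)\to\Ku(Y)$ is generated by the two objects $\O_Q(-1,0)$ and $\O_Q(0,-1)$ supported on the exceptional quadric.
\item Show that these two objects correspond, under the equivalence, to $\afrak(L_1)$ and $\afrak(L_2)$; here their restrictions to $C$ are the rulings, that is, $L_1$ and $L_2$.
\end{enumerate}

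\textbf{Main obstacle.} I expect the hard step to be matching the glued category from the mutations with $\DC^b(\T_C)$ exactly, including the gluing functor, and tracking the two spherical or exceptional kernel objects through all the mutations. I would first verify this on numerical classes and $\mathrm{RHom}$'s, and then lift to objects.
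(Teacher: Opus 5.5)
Your proposal is correct and follows the route the paper takes via Alexeev--Kuznetsov. For (2), Orlov's decomposition of $X=\Bl_C\P^3=\Bl_pY$ plus mutations gives $\widetilde{\Ku}(Y)\cong\DC^b(\T_C)$ with $\O_Q(-1,0),\O_Q(0,-1)\mapsto\afrak(L_1),\afrak(L_2)$; the paper gets your step 4 from Kuznetsov--Shinder's result that $R\pi_*\colon\DC^b(X)\to\DC^b(Y)$ is a Verdier localization with kernel generated by $\O_Q(-1,-1),\O_Q(-1,0),\O_Q(0,-1)$, combined with compatibility of the decompositions. For (1) the paper only cites Alexeev--Kuznetsov, and your computation with the long exact sequence of Lemma~\ref{lemma:aug_Exts} is a complete proof, since $H^1(\O_C)\to\Hom(H^0(L),H^1(L))$ is Serre dual to the Petri map (also, starting from $Y$, $Q$ is automatically smooth as the projectivized tangent cone at the node, so no generality assumption on $C$ is needed).
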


Our main result connects these two theorems as a degeneration of Bridgeland stability conditions.

\begin{teo} \label{teo:intro_stab}
Let $Y$ be a nodal cubic threefold, and let $C$ be the associated genus four curve. Then, the Feyzbakhsh--Novik stability conditions $\{\sigma_{b, w}\}$ of Theorem \ref{teo:intro_FN} degenerate to a weak stability condition $\sigma_{3,2}$, induced by a stability condition $\overline{\sigma}_{3, 2}$ on $\Ku(Y)$.
\end{teo}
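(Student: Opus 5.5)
The plan is to compute the Brill--Noether function $\Phi_C$ for a general genus four curve near slope $b=3$, identify the boundary point $(b,w)=(3,2)$, and show that the central charge $Z_{3,2}$ still defines a weak stability condition on $\DC^b(\T_C)$ whose kernel is generated by $\afrak(L_1)$ and $\afrak(L_2)$.

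First I will recall the Feyzbakhsh--Novik construction in coordinates. The heart is a tilt of $\T_C$ at slope $b$. The central charge, up to the normalisation used in Section \ref{sec:stab}, has the form
\[ Z_{b,w}(V,E) = -\deg E + b\rk E + i\bigl(w\rk E - \dim V\bigr), \]
or a rotation of it. The condition $w>\Phi_C(b)$ guarantees positivity on the heart. For a general genus four curve, $\Phi_C(3)=2$: each trigonal bundle $L_i$ has degree $3$ and $h^0=2$, so $h^0/\rk=2$ at slope $3$. By Mercat-type bounds (Clifford for higher rank), no semistable bundle of slope near $3$ beats this, and the supremum is attained exactly by $L_1$ and $L_2$. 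Then $(3,2)$ lies on the boundary of the allowed region, and I consider the limit $\sigma_{b,w}\to\sigma_{3,2}$ along $w\downarrow\Phi_C(b)$, $b\to 3$.

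Next I will show that $Z_{3,2}$ is a weak stability function on the limiting heart $\A_{3}$. Nonnegativity of $\Im Z$ and the condition $\Re Z\le 0$ when $\Im Z=0$ should follow by continuity from the FN inequalities. The new phenomenon is objects with $Z_{3,2}=0$. I will check that these are precisely the (shifts of) extensions of the exceptional objects $\afrak(L_1),\afrak(L_2)$, i.e. the coherent systems $(H^0(L_i),L_i)$. The argument is to run the FN positivity proof and track the equality cases: $Z_{3,2}(E)=0$ forces every HN factor of the underlying bundle to have slope $3$ and $h^0/\rk=2$. I expect the equality case of the higher-rank Clifford/Mercat bound at slope $3$ in genus four to force the factors to be $L_1$ or $L_2$. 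I will also need to verify that $\Hom$ and $\Ext^1$ between $\afrak(L_1)$ and $\afrak(L_2)$ vanish, so the kernel is the semisimple category $\langle \afrak(L_1),\afrak(L_2)\rangle$. Harder--Narasimhan filtrations and the support property for the weak condition would be inherited by a limiting argument, as in Bayer--Macrì--Stellari style degenerations, using that the kernel is a finite-length category.

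Finally I will pass to the quotient using Theorem \ref{teo:intro_AK}(2). Since $\ker Z_{3,2}\cap\A_3$ is a Serre subcategory generated by $\afrak(L_1)$ and $\afrak(L_2)$, the heart descends to a heart $\overline{\A}$ in the Verdier quotient $\Ku(Y)$. $Z_{3,2}$ factors through $K(\Ku(Y))$ because it vanishes on the classes of the $\afrak(L_i)$. On $\overline{\A}$ the induced function has no nonzero kernel, so it is a genuine stability function, giving $\overline{\sigma}_{3,2}$. HN filtrations descend from $\A_3$, and the support property follows from the one for $\sigma_{3,2}$ restricted to classes orthogonal to the kernel.

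The main obstacle is the middle step. It requires the sharp identification $\Phi_C(3)=2$ with equality only for $L_1$ and $L_2$, and also showing that the degenerate limit still has HN filtrations. For the former I will try the Mercat/Re bound on $h^0$ for semistable bundles of slope in $[2,4]$ on a general genus four curve, checking the equality case by induction on rank via the evaluation sequence. For the latter I will try to prove that $\A_3$ is Noetherian and $Z_{3,2}$ has discrete image in a neighbourhood of the relevant phases.
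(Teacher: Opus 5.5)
Your architecture agrees with the paper's: the weak central charge $Z_{3,2}$ on the heart $\A(3)$, the objects of $\A(3)$ with $Z_{3,2}=0$ being direct sums of $\afrak(L_1)[1]$ and $\afrak(L_2)[1]$, the Serre quotient of the heart, and descent to $\Ku(Y)$ through Theorem~\ref{teo:intro_AK}. Your direct description of that kernel is a fine substitute for the paper's mass argument via Bolognese: HN factors of slope $3$ with $h^0=2\rk$, Jordan--H\"older factors $L_i$, and $\Hom=\Ext^1=0$ between the $\afrak(L_i)$. Also, the support property for $\overline{\sigma}_{3,2}$ is automatic, since $\overline{Z}_{3,2}$ is injective on $(\Z^3/(1,3,2))\otimes\R$.

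\textbf{The gap: support property for $\sigma_{3,2}$.} The statement asserts this for the weak stability condition $\sigma_{3,2}$ itself, and it is not inherited by a limiting argument. You need a single quadratic form, negative on $\ker Z_{3,2}=\R(1,3,2)$, that is nonnegative on all $\sigma_{3,2}$-semistable objects uniformly as $w\to 2^+$. Finite length of the kernel gives no control on this.

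The paper takes $Q=(d-3r)^2+1.9r^2-nr$ and uses Corollary~\ref{cor:genfour_bound}: $\Phi_C(x)\le (x-3)^2+1.9$ for $x\neq 3$. Near $3$ this rests on the sharp bound $\Phi_C(x)\le \tfrac12 x+\tfrac14$ on $[5/2,3)$ from Proposition~\ref{prop:genfour_bound}, together with its reflection. In other words, $\Phi_C$ has an isolated spike at $3$, with values at most $7/4$ nearby.

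Your proposal misses this. You only claim that nothing near slope $3$ beats $2$. You also describe the degeneration as $w\downarrow\Phi_C(b)$, $b\to 3$; for $b\neq 3$ that path tends to height at most $7/4$, not to $(3,2)$. The correct path is $b=3$ fixed, $w\to 2^+$. If $\Phi_C(x)\to 2$ as $x\to 3$, the criterion of Lemma~\ref{lemma:stab_quad} would force $c\ge 2$ in any form $s(d-3r)^2+cr^2-nr$. Negativity on $(1,3,2)$ forces $c<2$, so the method would collapse.

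Two further ingredients are missing:
\begin{enumerate}
\item A $\sigma_{3,2}$-stable object with $\Im Z_{3,2}>0$ must remain $\sigma_{3,2+\delta}$-stable for small $\delta>0$. This is Lemma~\ref{lemma:pfmain_presupp}, proved via discreteness of $Z_{3,2}$, and it lets the $w$-independent Feyzbakhsh--Novik estimate apply.
\item The phase-one objects $(H^0(E),E;\ev_E)[1]$, with $E$ stable of slope $3$ and rank $r\ge 2$, need the uniform bound $h^0(E)\le\tfrac32 r$ of Lemma~\ref{lemma:genfour_slope3}. Your equality-case analysis by induction on rank would only give $h^0(E)<2r$. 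That is genuinely insufficient: these objects are automatically semistable, and classes $-(r,3r,2r-1)$ have $\abs{Z_{3,2}}=1$ while their norm grows linearly in $r$.
\end{enumerate}
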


This is motivated by the general philosophy that certain degenerations of Bridgeland stability conditions on a triangulated category $\D$ should converge to stability conditions on appropriate Verdier quotients of $\D$. See \citelist{\cite{BPPW22} \cite{Bol23} \cite{HLR25}} for various incarnations of this principle, and \citelist{\cite{Cho24} \cite{Cho25} \cite{Vil25}} for some recent examples. 

As a sample application, we will prove in Section \ref{sec:moduli} the following consequence.

\begin{cor} \label{cor:intro_moduli}
Let $C$ be a general genus four curve. There exists a numerical vector $\vbf \in K^{\num}(\T_C)$ satisfying the following. 
\begin{enumerate}
\item For each $w>2$, the moduli space $M_w$ of $\sigma_{3, w}$-semistable objects has an connected component $M_w^\circ$ isomorphic to $\Sym^2(C)$. 

\item Given $p+q, p'+q' \in \Sym^2(C)$, we have that the two corresponding objects in $M_w^\circ$ become S-equivalent in $\sigma_{3, 2}$ if and only if the two lines determined by $p+q, p'+q'$ in $\P^3$ (under the canonical embedding) intersect at a point $r \in C$. 
\end{enumerate}
\end{cor}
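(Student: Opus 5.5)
We need to prove Corollary \ref{cor:intro_moduli}. The plan is to choose $\vbf$ to be the class of an ideal-sheaf-type coherent system attached to a degree-two divisor. For $p+q\in\Sym^2(C)$, consider the coherent system $(H^0(\omega_C(-p-q)), \omega_C(-p-q))$. Here $h^0(\omega_C(-p-q))=2$ for general $p+q$, since the line $\overline{pq}$ in $\P^3$ cuts out a pencil of planes. Equivalently, take the dual-type object $E_{p+q}$ defined as the cone/kernel of the evaluation $V\otimes\O_C\to \omega_C(-p-q)$ packaged as an object of $\T_C$; this object has rank $2$ and degree $4$. Then $\vbf$ is its class, which is independent of $p+q$. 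I would first check with the explicit Feyzbakhsh--Novik central charge (Section \ref{sec:stab}) at $b=3$, $w>2$ that $E_{p+q}$ is $\sigma_{3,w}$-stable. The natural argument is a wall analysis: any destabilizing subobject would have to be a coherent system built from a line bundle of degree $\le 3$ with sections. By the Brill--Noether bound encoded in $\Phi_C(3)=2$ (so the trigonal $L_1,L_2$ are exactly what touches $w=2$), no wall occurs for $w>2$. Since there are no walls in the region $w>2$ at $b=3$, the moduli space $M_w$ is constant there. I would then show that $M_w^\circ$ is a connected component isomorphic to $\Sym^2(C)$ by the following steps:
\begin{enumerate}
\item the family $\{E_{p+q}\}$ is flat over $\Sym^2(C)$, the relative version of the construction, and it is injective on points;
\item $\Ext^1(E_{p+q},E_{p+q})$ has dimension $2$, computed via the long exact sequences in $\T_C$ together with $h^0,h^1$ on $C$, so the image is open and smooth;
\item the image is closed, by properness of $\Sym^2(C)$.
\end{enumerate}
One subtlety must be handled: the divisors $p+q$ where $h^0(\omega_C(-p-q))$ jumps. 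That would need $\omega_C(-p-q)$ to be a $g^1_4$ with extra sections, which cannot happen. The divisors with $p+q$ inside a trigonal divisor need separate checking, but stability should still hold.

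For part (2), I would use Theorem \ref{teo:intro_stab}. At $w=2$ the weak stability condition $\sigma_{3,2}$ kills $\afrak(L_1)$ and $\afrak(L_2)$, so S-equivalence in $\sigma_{3,2}$ means that the images in $\Ku(Y)$ have the same $\overline{\sigma}_{3,2}$-Jordan--Hölder factors. The key step is to identify the Jordan--Hölder filtration of $E_{p+q}$ at $w=2$. My claim is that $E_{p+q}$ becomes strictly semistable exactly when it receives a map from, or maps to, the object attached to a single point. Concretely, the line $\overline{pq}$ meets the curve $C$ (a $(2,3)$ complete intersection on a quadric cone) in a third point $r$ precisely when $p+q+r$ is a trigonal divisor, lying on a ruling. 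There is then an exact sequence with factors $\afrak(L_i)$ together with an object $F_r$ depending only on $r$. After quotienting, the JH factors are $\{F_r\}$ plus killed objects. Hence two such objects are S-equivalent iff they share $r$, which holds iff the lines meet at $r\in C$. If instead $E_{p+q}$ stays $\sigma_{3,2}$-stable, its image in $\Ku(Y)$ determines $p+q$. For this I would compare with the known description of $\overline{\sigma}$-stable objects in $\Ku(Y)$ as lines or points on the nodal cubic, using Theorem \ref{teo:intro_AK}.

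The main obstacle is this last identification: matching the walls at $w=2$ precisely with the geometric condition of coplanarity through a point of $C$. One direction is subtle. Two lines $\overline{pq}$ and $\overline{p'q'}$ meeting at a point $r\in C$ means, for instance, $r=p=p'$. So the S-equivalence presumably arises from a factor $F_p$ shared among all divisors containing $p$. Ensuring the JH factors are exactly $F_p$ plus something killed, and that distinct $r$ give non-isomorphic images in $\Ku(Y)$, would require an explicit Hom computation against $\afrak(L_1)$ and $\afrak(L_2)$. I would attempt that computation first.
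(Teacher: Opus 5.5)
Your outline has the right overall shape for part (1) (a family over $\Sym^2(C)$, injectivity, a two-dimensional $\Ext^1$, properness). However, the substantive steps are missing, and part (2) drifts toward a false claim.

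First, the object is not pinned down. The coherent system $(H^0(\omega_C(-p-q)),\omega_C(-p-q);\ev)$ has rank $1$, degree $4$ and class $(1,4,2)$. The kernel of evaluation is a line bundle of negative degree. Nothing here has rank $2$. Moreover $h^0(\omega_C(-p-q))=2$ for every $p+q$. The genuinely special divisors are those on a trisecant line, where $\omega_C(-p-q)\cong L_{3-i}(r)$ acquires a base point $r$ (here $p+q+r\in|L_i|$).

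Second, ``no walls because $\Phi_C(3)=2$'' is not a stability argument. The paper instead uses $(\C,\O_C(p+q);\id)[1]$, with $\vbf=(-1,-2,-1)$. It proves $\sigma_{3,w}$-stability by analysing a destabilizing sequence through its long exact cohomology sequence in $\T_C$ (Lemma~\ref{lemma:moduli_stable}). If you commit to your object $T$, there is a clean replacement. Since $\Im Z_{3,w}=\dbf-3\rbf$ is integer-valued and equals $1$ on $T$, only a subobject $P$ with $\Im Z_{3,w}(P)=0$ could destabilize. Such a $P$ has $H^0(P)=i_\ast V$. Both $\Hom(i_\ast V,T)$ and $\Hom(H^{-1}(P)[1],T)$ vanish, because $T\in\T_C$ and $V\to H^0(\omega_C(-p-q))$ is injective. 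With that fix, your family looks like a legitimate alternative to the paper's.

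For part (2), you are missing the explicit exact sequence, and your reading of the geometric condition is off. The relevant lines are the two rulings through a point $r\in C$ of the smooth quadric $Q$. It is not a cone: on a cone there is only one trigonal bundle. So $L_i\cong\O_C(p_i+q_i+r)$, and $r$ is a third point of $C$ on both lines. The paper uses the sequence $0\to(\C,\O_r;\id)\to(\C,\O_C(p_i+q_i);\id)[1]\to\afrak(L_i)[1]\to0$ in $\A(3)$. This identifies both objects with $(\C,\O_r;\id)$ in $\Ku(Y)$. For your object the analogue is $0\to\afrak(L_{3-i})\to T\to j_\ast\O_r\to0$ in $\T_C$, which gives $T\cong j_\ast\O_r$ in the quotient.

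The case $r=p=p'$ with $q,q'$ general produces no identification: those objects stay stable and non-isomorphic in $\Ku(Y)$. So looking for a factor $F_p$ shared by all divisors through $p$ is aimed at a false statement.

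Finally, the converse cannot rest on an unspecified ``known description'' of stable objects in $\Ku(Y)$. The paper argues directly. An isomorphism in $\A(3)/(\K\cap\A(3))$ is a roof $T_1\xleftarrow{f_1}W\xrightarrow{f_2}T_2$ with kernels and cokernels in $\K\cap\A(3)$. Away from the trisecant divisors, the $f_j$ are surjective and their kernels are iterated extensions of the $\afrak(L_i)[1]$. Then $\Hom(\afrak(L_i)[1],T_j)=0$ forces $\ker f_1=\ker f_2$, hence $T_1\cong T_2$.
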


This result should be compared to the normalization of the Fano variety of lines on a nodal cubic threefold; see the end of Section \ref{sec:moduli} for more details.

\subsection{Structure of the paper}

We will start by reviewing the construction of the derived category of coherent systems in Section \ref{sec:coh}. We will then discuss the \emph{Brill--Noether} function of a curve $C$ in Section \ref{sec:bn}, and will use it to construct the stability conditions of Theorem \ref{teo:intro_FN} in Section \ref{sec:stab}.

We will then specialize to general genus four curves. Their geometry will be reviewed in Section \ref{sec:genfour}, and their relation to nodal cubic fourfolds (including Theorem \ref{teo:intro_AK}) will be discussed in Section \ref{sec:sod}.

The next two sections are devoted to the proofs of our main results. We will prove Theorem \ref{teo:intro_stab} in Section \ref{sec:pfmain}, and we will prove Corollary \ref{cor:intro_moduli} in Section \ref{sec:moduli}. Finally, we give a short list of future directions in Section \ref{sec:questions}.

\subsection{Acknowledgements}

I am deeply grateful to my PhD advisor, Giulia Saccà, for her constant support and encouragement through the years. I am thankful to Laura Pertusi and Alekos Robotis for discussions related to this project, and to Alexander Kuznetsov for comments on a preliminary version of this paper.

This work was partially supported by the National Science Foundation (grant number DMS-2052934) and by the Simons Foundation (grant number SFI-MPS-MOV-00006719-09).

\section{Coherent systems} \label{sec:coh}

In this section we briefly recall the construction of the category of \emph{(generalized) coherent systems} on a curve, following \cite{FN25}*{\textsection 2}. See also \cite{AK25}*{\textsection 3} for a slightly different approach. We will then discuss some properties of its derived category, including the \emph{Brill--Noether exceptional objects} from \cite{AK25}*{\textsection 3.2.}

Through this section we fix a smooth, projective curve $C$ of genus $g$. We start by recalling the following construction.

\begin{defin}[\cite{FN25}*{p. 5}]
We let $\T_C$ be the category of triples\footnote{Note that in \cite{FN25} the notation $[V \otimes \O_C \xrightarrow{\phi} E]$ is also used.} $(V, E; \phi)$, where $V$ is a vector space, $E \in \Coh(C)$, and $\phi\colon \O_C \otimes V \to E$. Morphisms from $(V, E; \phi)$ to $(V', E'; \phi')$ consists of compatible maps $V \to V'$, $E \to E'$.
\end{defin}

It is not difficult to see that $\T_C$ is an abelian category. This way, we let $\DC^b(\T_C)$ be its bounded derived category.

\begin{lemma}[\cite{FN25}*{\textsection 2.1}] \label{lemma:aug_functors}
\begin{enumerate}
\item There exists an adjoint pair $i^\ast \dashv i_\ast$ between $\T_C$ and $\Vect_\C$, with $i^\ast(V, E; \phi)=V$ and $i_\ast(V)=(V, 0; 0)$. The functors are exact, and the corresponding extensions to the derived categories are also adjoint.

\item There exists an adjoint pair\footnote{Note that in \cite{FN25} the notation $j^\dagger$ is used instead.} $j_\ast \dashv j^!$ between $\Coh(C)$ and $\T_C$ with $j_\ast(E)=(0, E; 0)$ and $j^!(V, E; \phi) = E$. The functors are exact, and the corresponding extensions to the derived categories are also adjoint.

\item There is a semi-orthogonal decomposition
\[ \DC^b(\T_C) = \langle i_\ast \DC^b(\Vect_\C), j_\ast \DC^b(C) \rangle. \]
Up to a shift, the associated gluing functor is given by
\[ \DC^b(\Vect_\C)^{op} \times \DC^b(C) \to \DC^b(\C), \quad (V^\bullet, F^\bullet) \mapsto R\Hom(V^\bullet \otimes \O_C, F^\bullet). \]
\end{enumerate}
\end{lemma}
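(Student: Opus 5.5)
The plan is to check each part directly from the definition of $\T_C$, and then get the decomposition and its gluing functor from the two adjunctions.

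\emph{Part (1).} I first verify that $i^\ast(V,E;\phi)=V$ and $i_\ast(V)=(V,0;0)$ are functors. Both are exact, because kernels and cokernels in $\T_C$ are computed componentwise: a sequence of triples is exact exactly when its $V$-parts and its $E$-parts are exact. For the adjunction, a morphism $(V,E;\phi)\to(V',0;0)$ is a pair of maps $V\to V'$ and $E\to 0$. The compatibility condition is vacuous, since the square lands in $0$. So $\Hom_{\T_C}((V,E;\phi),i_\ast V')=\Hom(V,V')$, which gives $i^\ast\dashv i_\ast$.

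\emph{Part (2).} This is dual. A morphism $(0,E;0)\to(V',E';\phi')$ consists of $0\to V'$ and a map $f\colon E\to E'$. The compatibility is again automatic, because the source has $V=0$. Hence $\Hom(j_\ast E,(V',E';\phi'))=\Hom(E,E')$, and $j_\ast\dashv j^!$.

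\emph{Passing to derived categories.} Since all four functors are exact, they extend termwise to $\DC^b$. An adjunction between exact functors of abelian categories induces an adjunction on bounded derived categories; the unit and counit simply extend termwise. One caveat is needed when identifying derived Homs: $\T_C$ may lack enough projectives or injectives. I would handle this by noting that $\T_C$ has finite homological dimension (it is glued from $\Vect_\C$ and $\Coh(C)$). Alternatively, I would use the standard fact that an exact functor with an exact adjoint gives derived adjoints, checking it via Yoneda Exts.

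\emph{Part (3).} First, $i_\ast$ and $j_\ast$ are fully faithful, because the counit $i^\ast i_\ast\to\id$ and the unit $\id\to j^!j_\ast$ are identities. For semi-orthogonality, $\Hom(j_\ast F,i_\ast V[k])=\Hom(i^\ast j_\ast F,V[k])=0$, since $i^\ast j_\ast=0$. For generation, every object $(V,E;\phi)$ of $\T_C$ sits in the short exact sequence
\[0\to j_\ast E\to (V,E;\phi)\to i_\ast V\to 0.\]
So the two subcategories generate the heart, and therefore generate $\DC^b(\T_C)$.

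\emph{Gluing functor.} By the sequence above, the gluing functor is $R\Hom(i_\ast V, j_\ast F)$ up to shift. I would compute it from a two-term resolution of $i_\ast V$:
\[0\to j_\ast(V\otimes\O_C)\to (V,V\otimes\O_C;\id)\to i_\ast V\to 0.\]
The middle term $P_V$ satisfies $\Hom(P_V,(V',E';\phi'))=\Hom(V,V')$, by the universal property: $\phi'$ determines the map on the sheaf part. Hence $P_V$ is left adjoint-like, and $R\Hom(P_V,j_\ast F)=R\Hom(V,i^\ast j_\ast F)=0$. It follows that $R\Hom(i_\ast V,j_\ast F)\cong R\Hom(j_\ast(V\otimes\O_C),j_\ast F)[-1]=R\Hom(V\otimes\O_C,F)[-1]$.

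The main obstacle is justifying the derived computation $R\Hom(P_V,-)=R\Hom(V,i^\ast-)$. For this I would exhibit $P_V=p(V)$, where $p$ is an exact left adjoint of $i^\ast$, and then apply the same derived-adjunction argument as above.
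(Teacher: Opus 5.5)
Your proposal is correct. The paper gives no argument of its own for this lemma and simply cites \cite{FN25}, so your write-up is a self-contained version of the standard direct verification:
\begin{itemize}
\item componentwise kernels and cokernels give exactness of $i^\ast, i_\ast, j_\ast, j^!$;
\item the two Hom computations give the adjunctions;
\item the canonical sequence $0\to j_\ast E\to (V,E;\phi)\to i_\ast V\to 0$ gives generation;
\item the resolution of $i_\ast V$ by $p(V)=(V,V\otimes\O_C;\id)$, with $p\dashv i^\ast$ and both exact, computes the gluing functor as $R\Hom(V\otimes\O_C,F)[-1]$.
\end{itemize}

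Two small remarks. First, your caveat about $\T_C$ lacking enough projectives or injectives is unnecessary. Appealing to finite homological dimension is also not what resolves it, and would be somewhat circular. For an adjunction between exact functors, the termwise unit and counit are natural on $K^b$, descend to $\DC^b$, and still satisfy the triangle identities. That is all you need, and it covers $p\dashv i^\ast$ as well. Note also that $p(V)$ is in fact projective in $\T_C$, since $\Hom(p(V),-)=\Hom(V,i^\ast(-))$ is exact. Second, your shift $[-1]$ is consistent with Lemma~\ref{lemma:aug_Exts}. Taking $T_1=i_\ast V$ and $T_2=j_\ast F$ there gives $\Ext^k(i_\ast V,j_\ast F)\cong\Ext^{k-1}(V\otimes\O_C,F)$.
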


It turns out that $i_\ast$ also has a \emph{right} adjoint $i^!\colon \T_C \to \Vect_\C$; and $j_\ast$ has a \emph{left} adjoint $j^\ast\colon \T_C \to \Coh(C)$. We will not be using them explicitly; thus, we refer to \cite{FN25}*{\textsection 2.1} for details.

\begin{claim}
The category $\DC^b(\T_C)$ is equivalent to an admissible
subcategory of $\DC^b(X)$, for a smooth, proper variety $X$. See \cite{AK25}*{Lemma 3.4} for an explicit construction; alternatively, this follows from \cite{Orl16}*{Theorem 4.15}.
\end{claim}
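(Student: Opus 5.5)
The plan is to realize $\T_C$ as a category of modules over a sheaf of algebras on $C$, and then embed its derived category into that of a projective bundle. Fix a finite-dimensional vector space; since $\T_C$ allows arbitrary $V$, it is convenient to view objects as representations of a "one-vertex-plus-curve" quiver. Concretely, $\T_C$ is equivalent to the category of coherent modules over the sheaf of triangular algebras
\[ \A = \begin{pmatrix} \C & 0 \\ \O_C & \O_C \end{pmatrix} \]
on a suitable space. More invariantly, $\T_C$ is the gluing of $\Vect_\C = \Coh(\mathrm{pt})$ and $\Coh(C)$ along the bimodule $\O_C$, viewed as an object of $\DC^b(\mathrm{pt} \times C)$. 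This matches the semi-orthogonal decomposition of Lemma \ref{lemma:aug_functors}(3), whose gluing functor is $R\Hom(V\otimes \O_C, -)=R\Gamma$.

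First, I verify that $\DC^b(\T_C)$ is the categorical gluing, in the sense of Orlov, of $\DC^b(\mathrm{pt})$ and $\DC^b(C)$ along the kernel $\O_C$. This is essentially Lemma \ref{lemma:aug_functors}(3), together with the fact that the gluing data determine the dg-enhancement.

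Next, I invoke \cite{Orl16}*{Theorem 4.15}. It says that a gluing of $\DC^b$ of smooth projective schemes along a perfect bimodule embeds as an admissible subcategory of $\DC^b(X)$ for some smooth projective $X$. Smoothness and properness of $\DC^b(\T_C)$ are inherited from those of the pieces, since the gluing bimodule $\O_C$ is perfect.

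For an explicit model, as in \cite{AK25}, I would try the following. Take a point $x\in C$ (or any embedding) and set $X=\Bl$ of $\mathrm{pt}\times\P^1$ style constructions. A cleaner choice is $X=\P_C(\O_C\oplus\O_C)=C\times\P^1$, which has a decomposition $\langle \DC^b(C)\otimes\O(-1),\DC^b(C)\rangle$. Then find an exceptional-type object $\O$ whose orthogonal realizes the gluing, e.g. a subcategory $\langle \O_{X}, \DC^b(C)\otimes \O(1)\rangle$ where $R\Hom(\O_X, \pi^*F(1)) \cong R\Gamma(F)\otimes H^0(\O_{\P^1}(1))$. This gives two copies of $R\Gamma$, so the construction must be adjusted, for instance by using a section $C\hookrightarrow X$ or a blowup. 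Admissibility would then follow because $\O_X$ is exceptional and $\pi^*\DC^b(C)(1)$ is admissible in a smooth projective variety, and subcategories generated by admissible pieces of a semi-orthogonal sequence are admissible.

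The main obstacle is the explicit geometric model: matching the gluing functor exactly to $R\Gamma$, rather than a multiple of it. For this reason I would rely primarily on Orlov's theorem, where the only check is that $\O_C$ is a perfect bimodule on $\mathrm{pt}\times C$, which is immediate.
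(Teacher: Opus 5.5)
Your proposal is correct and follows the paper's second route. You identify $\DC^b(\T_C)$, through the semi-orthogonal decomposition of Lemma \ref{lemma:aug_functors}(3), with Orlov's gluing of $\DC^b(\mathrm{pt})$ and $\DC^b(C)$ along the perfect bimodule $\O_C$ (up to shift), and then apply \cite{Orl16}*{Theorem 4.15}. The explicit model you abandon is not needed; the paper only cites \cite{AK25}*{Lemma 3.4} for it.

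The one step to state more carefully is that the gluing bimodule must be identified at the dg level, not read off from the triangulated decomposition alone. This is easy here: $i_\ast\C$ is the cone of $j_\ast\O_C \to (\C,\O_C;\id)$, and $R\Hom((\C,\O_C;\id), j_\ast F)=0$ for all $F$. Hence $R\Hom(i_\ast\C, j_\ast F)\cong R\Hom(\O_C,F)[-1]$ functorially, so the bimodule is represented by $\O_C[-1]$.
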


\begin{lemma}[\cite{FN25}*{Proposition 2.2}] \label{lemma:aug_Exts}
Given two objects $T_i = (V_i, E_i; \phi_i) \in \T_C$, we have that $\Ext^k(T_1, T_2)=0$ for $k \neq 0, 1, 2$. Moreover, we have a long exact sequence
\begin{gather*}
0 \to \Hom(T_1, T_2) \to \Hom(V_1, V_2) \oplus \Hom(E_1, E_2) \to \Hom(\O_C \otimes V_1, E_2) \\
\to \Ext^1(T_1, T_2) \to \Ext^1(E_1, E_2) \to \Ext^1(\O_C \otimes V_1, E_2) \to \Ext^2(T_1, T_2) \to 0.
\end{gather*}
\end{lemma}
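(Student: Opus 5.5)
We compute $R\Hom(T_1,T_2)$ using the semi-orthogonal decomposition of Lemma \ref{lemma:aug_functors}(3). Each $T=(V,E;\phi)\in\T_C$ sits in a short exact sequence in $\T_C$
\[ 0 \to j_\ast E \to T \to i_\ast V \to 0, \]
where the first map is the identity on $E$ and the second is the identity on $V$. Exactness is checked componentwise, and both maps are morphisms of triples since $\phi$ composed with the zero map is zero. We apply $R\Hom(-,T_2)$ to this sequence for $T_1$, and afterwards $R\Hom(i_\ast V_1,-)$ and $R\Hom(j_\ast E_1,-)$ to the sequence for $T_2$.

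The first step is to compute the four pieces from the adjunctions.
\begin{itemize}
\item $R\Hom(j_\ast E_1, T_2)=R\Hom(E_1, j^!T_2)=R\Hom(E_1,E_2)$, using $j_\ast\dashv j^!$.
\item $R\Hom(T_1, i_\ast V_2)=R\Hom(V_1,V_2)$, using $i^\ast\dashv i_\ast$. This lies in degree $0$ only.
\item $R\Hom(j_\ast E_1, i_\ast V_2)=0$ by semi-orthogonality.
\item $R\Hom(i_\ast V_1, j_\ast E_2)$ is the gluing term. It equals $R\Hom(\O_C\otimes V_1,E_2)[-1]$, since the only extensions of $i_\ast V_1$ by $j_\ast E_2$ are the triples $(V_1,E_2;\phi)$, and these are classified by $\phi\in\Hom(\O_C\otimes V_1,E_2)$.
\end{itemize}
The degree shift in the last item is the "up to a shift" in Lemma \ref{lemma:aug_functors}(3). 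I would confirm it directly: $\Hom_{\T_C}(i_\ast V_1, j_\ast E_2)=0$, because a map of triples from $(V_1,0;0)$ to $(0,E_2;0)$ is zero on both components.

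Combining these, $R\Hom(i_\ast V_1,T_2)$ sits in a triangle with $R\Hom(i_\ast V_1,j_\ast E_2)$ and $R\Hom(V_1,V_2)$. This gives a triangle
\[ R\Hom(T_1,T_2)\to R\Hom(V_1,V_2)\oplus R\Hom(E_1,E_2)\to R\Hom(\O_C\otimes V_1,E_2). \]
The cleanest route is to present $T_1$ directly as a cone of the evaluation $i_\ast V_1[-1]$, or equivalently to identify the connecting maps as the natural ones. The map $R\Hom(V_1,V_2)\to R\Hom(\O_C\otimes V_1,E_2)$ should be $f\mapsto \phi_2\circ(1\otimes f)$. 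The map $R\Hom(E_1,E_2)\to R\Hom(\O_C\otimes V_1,E_2)$ should be $g\mapsto g\circ\phi_1$, up to sign.

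The vanishing range then follows from the triangle. $R\Hom(V_1,V_2)$ lives in degree $0$. Since $C$ is a curve, $R\Hom(E_1,E_2)$ and $R\Hom(\O_C\otimes V_1,E_2)$ live in degrees $0$ and $1$. The cone of a map between complexes in degrees $[0,1]$, shifted by $[-1]$, lives in degrees $[0,2]$. This gives $\Ext^k(T_1,T_2)=0$ for $k\neq0,1,2$. Writing out the long exact sequence yields exactly the stated sequence. It ends in $0$ because $\Ext^2(E_1,E_2)=0$ on a curve, and it begins with $0$ because $\Ext^{-1}$ of the target vanishes.

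The main obstacle is bookkeeping rather than concept. One must correctly identify the middle map as the difference $(f,g)\mapsto g\phi_1-\phi_2 f$ and not lose track of shifts. I would avoid the octahedral juggling by an explicit route. Take the two-term complex $R\Hom(V_1,V_2)\oplus R\Hom(E_1,E_2)\to R\Hom(\O_C\otimes V_1,E_2)$, computed with an injective resolution of $E_2$ in $\Coh(C)$. Check that this complex computes $R\Hom_{\T_C}(T_1,T_2)$ by noting that triples $(V,I;\psi)$ with $I$ injective quasi-coherent are injective in the ind-completion of $\T_C$. In degree $0$ this recovers the obvious description of $\Hom(T_1,T_2)$ as pairs of maps $(f,g)$ satisfying $g\phi_1=\phi_2 f$, which fixes the signs.
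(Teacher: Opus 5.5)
The paper gives no proof of this lemma beyond citing \cite{FN25}*{Proposition 2.2}, so your proposal has to stand on its own. Most of it does. The sequence $0\to j_\ast E\to T\to i_\ast V\to 0$, the adjunction computations, and the two triangles
\[ R\Hom(i_\ast V_1,T_2)\to R\Hom(T_1,T_2)\to R\Hom(E_1,E_2),\qquad R\Hom(\O_C\otimes V_1,E_2)[-1]\to R\Hom(i_\ast V_1,T_2)\to \Hom(V_1,V_2) \]
are correct. These two triangles alone already give $\Ext^k(T_1,T_2)=0$ for $k\neq 0,1,2$, with no identification of maps needed. In your fourth bullet, the extension count only determines $\Hom$ and $\Ext^1$. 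The value of $\Ext^2(i_\ast V_1,j_\ast E_2)$ comes entirely from the ``up to a shift'' in Lemma \ref{lemma:aug_functors}(3). That is admissible, but it also follows directly from the projective object $i_!V_1:=(V_1,\O_C\otimes V_1;\id)$, which satisfies $\Hom(i_!V_1,T)=\Hom(V_1,i^\ast T)$, together with the exact sequence $0\to j_\ast(\O_C\otimes V_1)\to i_!V_1\to i_\ast V_1\to 0$.

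The genuine gap is in the step you flag yourself: merging these into one triangle with middle term $\Hom(V_1,V_2)\oplus R\Hom(E_1,E_2)$. Your explicit route relies on the claim that every triple $(V,I;\psi)$ with $I$ injective quasi-coherent is injective in the ind-completion. This is false. If $0\neq s\in H^0(I)$, the extension $0\to(0,I;0)\to(\C,I;s)\to(\C,0;0)\to 0$ does not split, since a splitting would force $s=0$. In fact, for $I$ injective, $(V,I;\psi)$ is injective exactly when the adjoint map $V\to H^0(I)$ is surjective. Equivalently, it has the form $i_\ast W\oplus j_\sharp I$, where $j_\sharp F:=(H^0(F),F;\ev)$ is the right adjoint of the exact functor $j^!$.

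Consequently, resolving $T_2$ by $(V_2,I^0;\psi^0)\to(0,I^1;0)\to\cdots$ computes the wrong thing. For $T_1=i_\ast\C$ and $T_2=j_\ast\O_C$ it gives $0$, whereas $\Ext^1(i_\ast\C,j_\ast\O_C)\cong H^0(\O_C)\neq 0$. Your cone complex is nevertheless correct, for a different reason: it equals $\Hom(T_1,J^\bullet)$ for the genuine injective resolution $J^0=i_\ast V_2\oplus j_\sharp I^0$, $J^k=i_\ast H^0(I^{k-1})\oplus j_\sharp I^k$.

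There are two simpler repairs. First, you can apply the octahedral axiom to the composite $R\Hom(\O_C\otimes V_1,E_2)[-1]\to R\Hom(i_\ast V_1,T_2)\to R\Hom(T_1,T_2)$. The resulting extension of $R\Hom(E_1,E_2)$ by $\Hom(V_1,V_2)$ splits for degree reasons over a field. This suffices for the statement, though it does not pin down the maps. Second, and better, use the exact sequence
\[ 0\to j_\ast(\O_C\otimes V_1)\to i_!V_1\oplus j_\ast E_1\to T_1\to 0, \]
whose first map is zero on vector spaces and $x\mapsto(x,-\phi_1x)$ on sheaves. Applying $R\Hom(-,T_2)$ gives at once the triangle $R\Hom(T_1,T_2)\to\Hom(V_1,V_2)\oplus R\Hom(E_1,E_2)\to R\Hom(\O_C\otimes V_1,E_2)$. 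Its middle map is $(f,g)\mapsto\phi_2\circ(1\otimes f)-g\circ\phi_1$. This is precisely the ``present $T_1$ as a cone'' route you mention, carried out.
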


\begin{cor}[cf. \cite{FN25}*{p. 7}]
The Euler pairing $\chi(T_1, T_2)$ on $\DC^b(\T_C)$ is given by the formula $\chi(T_1, T_2) = \vbf(T_1) M \vbf(T_2)^\top$. Here $\vbf(T) = (\rbf(T), \dbf(T), \nbf(T))=(\rk E, \deg E, \dim V)$ for $T=(V, E; \phi)$, and extended linearly; and 
\[ M = \begin{pmatrix} 1-g & 1 & 0 \\ -1 & 0 & 0 \\ g-1 & -1 & 1 \end{pmatrix}. \]
In particular, we have that the numerical K-theory of $\DC^b(\T_C)$ is isomorphic to $\Z^3$, induced by $\vbf$. 
\end{cor}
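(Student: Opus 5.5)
The plan is to compute $\chi(T_1,T_2)$ from the long exact sequence of Lemma \ref{lemma:aug_Exts}, using additivity of the Euler characteristic, and then express the result in terms of $\vbf$. Since $\Ext^k(T_1,T_2)$ vanishes outside $k=0,1,2$, the alternating sum of dimensions along the exact sequence gives
\[ \chi(T_1,T_2) = \dim\Hom(V_1,V_2) + \chi(E_1,E_2) - \chi(\O_C\otimes V_1, E_2). \]
Equivalently, one can use the semi-orthogonal decomposition of Lemma \ref{lemma:aug_functors}(3) to get the same formula, since the gluing functor contributes exactly $R\Hom(V_1\otimes\O_C,E_2)$. The exact sequence route is more direct, so I would present that.

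Next, I would evaluate each term by Riemann--Roch on $C$, writing $r_i=\rk E_i$, $d_i=\deg E_i$, $n_i=\dim V_i$. We have $\dim\Hom(V_1,V_2)=n_1n_2$. For coherent sheaves on a curve, $\chi(E_1,E_2)=r_1d_2-r_2d_1+r_1r_2(1-g)$. Finally, $\chi(\O_C\otimes V_1,E_2)=n_1\chi(E_2)=n_1(d_2+r_2(1-g))$. Summing,
\[ \chi(T_1,T_2) = (1-g)r_1r_2 + r_1d_2 - d_1r_2 + (g-1)n_1r_2 - n_1d_2 + n_1n_2. \]
Comparing coefficients with the bilinear form $\vbf(T_1)M\vbf(T_2)^\top$ recovers exactly the matrix $M$, with rows indexed by $(r_1,d_1,n_1)$ and columns by $(r_2,d_2,n_2)$. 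The formula then extends to all of $\DC^b(\T_C)$ by bilinearity, since $\T_C$ objects generate $K_0$.

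For the numerical K-theory statement, I would show two things.
\begin{enumerate}
\item \textbf{The form factors through $\vbf$ and is nondegenerate.} The Euler form factors through $\vbf\colon K_0\to\Z^3$ by the formula above. It is nondegenerate on $\Z^3$, since $\det M = 1\cdot(0\cdot1 - 0\cdot(-1)) - 1\cdot(-1\cdot 1 - 0) = 1$, so $M$ is unimodular. Hence the numerical kernel equals $\ker\vbf$.
\item \textbf{$\vbf$ is surjective.} The classes $(0,0,1)=\vbf(i_\ast\C)$, $(0,1,0)=\vbf(j_\ast\O_p)$ and $(1,0,0)=\vbf(j_\ast\O_C)$ all lie in the image.
\end{enumerate}
Therefore $K^{\num}(\T_C)\cong\Z^3$. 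The only delicate point is sign bookkeeping in the alternating sum, namely that $\Hom(\O_C\otimes V_1,E_2)$ enters with a minus sign; there is no real obstacle.
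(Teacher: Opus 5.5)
Your proposal is correct and follows the route the paper intends: the statement is recorded as an immediate corollary of the long exact sequence in Lemma \ref{lemma:aug_Exts}, whose alternating sum plus Riemann--Roch on $C$ gives exactly your formula, and your surjectivity-plus-unimodularity argument correctly identifies $K^{\num}(\T_C)$ with $\Z^3$. The only blemish is cosmetic: in your cofactor expansion of $\det M$ the first coefficient should be $1-g$ rather than $1$, but its minor vanishes, so $\det M = 1$ still holds.
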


We finish our summary by recalling the construction of \emph{BN exceptional objects}, following \cite{AK25}*{\textsection 3.2}. To do so, we recall that a line bundle $L$ on $C$ is \emph{Brill--Noether--Petri extremal} (or \emph{BNP extremal} for short) if the map
\[ H^0(C, L) \otimes H^0(C, L^\vee \otimes \omega_C) \to H^0(C, \omega_C) \]
is an isomorphism. Given such a line bundle, we associate the object
\[ \afrak(L) = (H^0(C, L), L; \ev_L) \in \T_C, \]
where $\ev_L$ is the evaluation map. We call $\afrak(L)$ the \emph{Brill--Noether exceptional object} associated by $L$, or \emph{BN exceptional} for short. The name is justified by the following proposition.

\begin{prop}[\cite{AK25}*{3.12}]
Given a BNP extremal line bundle $L$, the Brill--Noether modification $\afrak(L)$ is an exceptional object in $\DC^b(\T_C)$.
\end{prop}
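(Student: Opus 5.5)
We need to show $\Hom(\afrak(L),\afrak(L))=\C$ and $\Ext^1=\Ext^2=0$, using the long exact sequence of Lemma \ref{lemma:aug_Exts} with $T_1=T_2=\afrak(L)$, $V=H^0(C,L)$, $E=L$ and $\phi=\ev_L$. Since $\Ext^k$ vanishes for $k\neq 0,1,2$ by the same lemma, only these three groups need to be computed.

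First I compute the terms of the sequence. We have $\Hom(V,V)=\operatorname{End}H^0(L)$ and $\Hom(L,L)=\C$. Also $\Hom(\O_C\otimes V,L)=V^\vee\otimes H^0(L)=\operatorname{End}H^0(L)$. Next, $\Ext^1(L,L)=H^1(\O_C)$, which has dimension $g$. Finally $\Ext^1(\O_C\otimes V,L)=V^\vee\otimes H^1(L)$, which by Serre duality is $(V\otimes H^0(\omega_C\otimes L^\vee))^\vee$.

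The first map sends $(a,c)\in \operatorname{End}(V)\oplus\C$ to $c\cdot\ev_L-\ev_L\circ(a\otimes 1)$. Under the identification $\Hom(\O\otimes V,L)=\operatorname{End}V$, where $\ev_L$ corresponds to $\id$, this map is $(a,c)\mapsto c\,\id-a$. It is surjective, with kernel $\{(c\,\id,c)\}\cong\C$. Hence $\Hom(\afrak(L),\afrak(L))=\C$, and the connecting map into $\Ext^1(\afrak(L),\afrak(L))$ is zero. Exactness then gives
\[ 0\to \Ext^1(\afrak(L),\afrak(L))\to H^1(\O_C)\xrightarrow{\ \delta\ } V^\vee\otimes H^1(L)\to \Ext^2(\afrak(L),\afrak(L))\to 0. \]

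The remaining step is to identify $\delta$. It is composition with $\ev_L$, i.e.\ the map $H^1(\O)\to \Hom(V,H^1(L))$ given by $\xi\mapsto(s\mapsto s\cup\xi)$. I claim its Serre dual is the Petri map
\[ \mu\colon V\otimes H^0(\omega_C\otimes L^\vee)\to H^0(\omega_C)=H^1(\O)^\vee. \]
This follows from the compatibility of the cup product with Serre duality: $\langle s\cup\xi,t\rangle=\langle \xi, st\rangle$ for $s\in H^0(L)$, $t\in H^0(\omega\otimes L^\vee)$ and $\xi\in H^1(\O)$. Since $L$ is BNP extremal, $\mu$ is an isomorphism, so $\delta$ is an isomorphism as well. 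Therefore $\Ext^1=\Ext^2=0$ and $\afrak(L)$ is exceptional.

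The main obstacle is checking that the maps in the long exact sequence of Lemma \ref{lemma:aug_Exts} really are the natural composition maps, with the signs as used above, so that the dual identification of $\delta$ with $\mu$ is valid. Signs do not affect injectivity or surjectivity, so it suffices to verify that each map agrees with composition by $\phi$ up to sign. This follows from the cone description of $R\Hom$ via the semi-orthogonal decomposition in Lemma \ref{lemma:aug_functors}.
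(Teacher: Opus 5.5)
Your proof is correct and complete. The surjection $(a,c)\mapsto c\,\id-a$ in the sequence of Lemma \ref{lemma:aug_Exts} gives $\Hom(\afrak(L),\afrak(L))=\C$. It also leaves $\Ext^1$ and $\Ext^2$ as the kernel and cokernel of $\delta\colon H^1(\O_C)\to H^0(L)^\vee\otimes H^1(L)$, and you correctly identify $\delta$ as the Serre dual of the Petri map $\mu$.

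The paper gives no argument of its own; it only cites Alexeev--Kuznetsov. Your direct Ext computation is the natural self-contained verification. It actually gives more: $\Ext^1\cong(\coker\mu)^\vee$ and $\Ext^2\cong(\ker\mu)^\vee$, so BNP extremality is also necessary for $\afrak(L)$ to be exceptional.
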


\section{Brill--Noether function} \label{sec:bn}

In this section we briefly define the \emph{Brill--Noether function} of a curve $C$, following \cite{FN25}*{p. 13}. Informally, this function is introduced to bound the dimension of the space of global sections of semistable sheaves. We let $C$ be a smooth, projective curve of genus $g \geq 1$. 

We start by defining the function $\varphi=\varphi_C\colon \Q \to \R$ via
\[ \varphi_C(\mu) = \sup\left\{ \frac{h^0(C, F)}{\rk(E)}: E \in \Coh(C), \text{slope-semistable, } \mu(E)=\mu \right\}, \]
where $\mu(E)=\deg(E)/\rk(E)$. Note that our assumption on the genus guarantees that for every rational number $\mu$ there is a semistable sheaf with slope $\mu$, hence the supremum is over a non-empty set. The fact that $\varphi(\mu)\neq +\infty$ is a consequence of the following result.

\begin{lemma}[cf. \cite{FN25}*{p. 13}] \label{lemma:bn_prephi}
Let $\mu$ be a rational number.
\begin{enumerate}
\item If $\mu<0$, then $\varphi(\mu)=0$. 
\item We have $\varphi(\mu) = \varphi(2g-2-\mu) + \mu + (1-g)$.
\item If $\mu>2g-2$, then $\varphi(\mu)=\mu + 1-g$.
\item If $0 \leq \mu \leq 2g-2$, then $\varphi(\mu) \leq \mu/2+1$. 
\end{enumerate}
\end{lemma}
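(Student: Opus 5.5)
We prove the four items in the order (1), (2), (3), (4), reducing each to a statement about a single semistable sheaf $E$ of slope $\mu$ and then taking the supremum. (The definition of $\varphi$ has a typo, $h^0(C,F)$ for $h^0(C,E)$; we read it as $h^0(C,E)/\rk(E)$.) Since $\varphi$ only involves slope-semistable sheaves of positive rank, we may take $E$ to be a vector bundle: a torsion subsheaf would have infinite slope and destabilize $E$.

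For (1), suppose $\mu(E)<0$ and $s\in H^0(C,E)$ is nonzero. Then $s$ generates a subsheaf $\O_C\to E$ whose saturation is a line subbundle of degree $\geq 0>\mu(E)$, contradicting semistability. Hence $h^0(E)=0$ and $\varphi(\mu)=0$.

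For (2), the natural tool is Riemann--Roch plus Serre duality:
\[ h^0(E)-h^0(E^\vee\otimes\omega_C)=\deg E+\rk E\,(1-g). \]
Dividing by $\rk E$ gives $h^0(E)/\rk E = h^0(E')/\rk E' + \mu + 1-g$, where $E'=E^\vee\otimes\omega_C$. The assignment $E\mapsto E'$ is an involution on semistable bundles, because dualizing and twisting by a line bundle both preserve semistability. It sends slope $\mu$ to slope $2g-2-\mu$ and preserves rank. So taking suprema on both sides yields the identity in (2).

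For (3), if $\mu>2g-2$ then $2g-2-\mu<0$, so (1) gives $\varphi(2g-2-\mu)=0$, and (2) gives $\varphi(\mu)=\mu+1-g$.

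For (4) we use Clifford's theorem for semistable bundles, as proved by Brambila-Paz, Grzegorczyk and Newstead: a semistable $E$ with $0\le\mu(E)\le 2g-2$ satisfies $h^0(E)\le \deg E/2+\rk E$. This is the main obstacle. The proof first uses the Harder--Narasimhan and Jordan--Hölder filtrations to reduce to the case where $E$ is stable, since $h^0$ is subadditive along filtrations and all the graded pieces have slopes in $[0,2g-2]$. For stable $E$ one then argues by induction on the rank. The generic sections generate a subsheaf $G$. If $G$ has smaller rank, induction applies to $G$, whose slope lies in $[0,\mu]$ by stability, and the estimate is combined with Serre duality for the quotient. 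If $G$ has full rank, one uses the classical argument: the multiplication map $H^0(E)\otimes H^0(\omega\otimes E^\vee)\to H^0(\omega)$ (or its analogue after taking determinants) gives $h^0(E)+h^0(E')\le \rk E\,(g+1)$. Adding this to the Riemann--Roch equality from (2) yields $2h^0(E)\le \deg E + 2\rk E$. If these steps become lengthy, we would cite the Brambila-Paz--Grzegorczyk--Newstead result, as \cite{FN25} does, rather than reprove it. Dividing by $\rk E$ and taking the supremum gives $\varphi(\mu)\le\mu/2+1$.
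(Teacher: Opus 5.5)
Your arguments for (1)--(3), together with your decision to cite \cite{BPGN97}*{Theorem 2.1} for (4), are exactly the paper's proof: a nonzero section gives $\O_C\to E$, Riemann--Roch with Serre duality plus semistability of $E^\vee\otimes\omega_C$ gives (2), and (3) combines the two. Do rely on that citation rather than on your sketch, whose full-rank step does not work. The Hopf-lemma argument needs a pairing without zero divisors, and the evaluation pairing $H^0(E)\otimes H^0(\omega_C\otimes E^\vee)\to H^0(\omega_C)$ generally has zero divisors in rank $\geq 2$. Moreover, the inequality $h^0(E)+h^0(E^\vee\otimes\omega_C)\le \rk E\,(g+1)$ is, by Riemann--Roch, just a restatement of the conclusion. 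A working self-contained route is induction on rank for all bundles whose Harder--Narasimhan slopes lie in $[0,2g-2]$: use duality to assume $\mu\le g-1$, then split off the saturation of a single section.
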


\begin{proof}
Part (1) follows from the fact that a non-zero element of $h^0(C, E)$ gives an injection $\O_C \to E$ if $E$ is a vector bundle. Part (2) is a consequence of Riemann--Roch, together that $E^\vee \otimes \omega_C$ is slope-semistable if $E$ is slope-semistable. Part (3) follows from (1) and (2). Finally, part (4) is a consequence of the higher rank Clifford's theorem, cf. \cite{BPGN97}*{Theorem 2.1}.
\end{proof}

It turns out that for our purposes it is better to write a slightly different function. We set $\Phi = \Phi_C\colon \R \to \R$ via
\[ \Phi_C(x) = \limsup_{\varepsilon \to 0} \{ \varphi(\mu): \mu \in (x-\epsilon, x+\epsilon) \cap \Q \}. \]
The following lemma follows immediately from Lemma \ref{lemma:bn_prephi}.

\begin{lemma}[\cite{FN25}*{3.2}]
\begin{enumerate}
\item The function $\Phi_C$ satisfies the analogous properties of Lemma \ref{lemma:bn_prephi}. 
\item We have that $\Phi_C$ is upper-semicontinuous and $\Phi_C(\mu(E)) \geq h^0(C, E)/\rk(E)$ for any semistable vector bundle $E$ on $C$. 
\end{enumerate}
\end{lemma}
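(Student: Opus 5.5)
We prove the two parts of the statement separately. Part (1) says that $\Phi_C$ satisfies the analogues of Lemma \ref{lemma:bn_prephi}. Part (2) consists of upper-semicontinuity and the inequality $\Phi_C(\mu(E)) \geq h^0(C,E)/\rk(E)$. Throughout we use the definition
\[ \Phi_C(x)=\lim_{\varepsilon\to 0}\sup\{\varphi(\mu):\mu\in(x-\varepsilon,x+\varepsilon)\cap\Q\}. \]
Here the supremum is non-increasing in $\varepsilon$, so the limit exists in $[-\infty,+\infty]$. It is finite because, by Lemma \ref{lemma:bn_prephi}, $\varphi$ is bounded on bounded intervals: it is $0$ for $\mu<0$, at most $\mu/2+1$ on $[0,2g-2]$, and equal to $\mu+1-g$ beyond $2g-2$.

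\textbf{Part (1).} Each property transfers by passing the corresponding identity or inequality through sup and limit.
\begin{itemize}
\item For $x<0$, choose $\varepsilon<|x|$. Then every $\mu$ in the window is negative, so $\varphi(\mu)=0$ and $\Phi_C(x)=0$.
\item For $x>2g-2$, the same argument gives $\varphi(\mu)=\mu+1-g$ on a small window. The supremum over the window is $x+\varepsilon+1-g$, which tends to $x+1-g$.
\item For the symmetry, the map $\mu\mapsto 2g-2-\mu$ is a bijection of $\Q$ sending $(x-\varepsilon,x+\varepsilon)$ onto the window around $2g-2-x$. Hence
\[ \sup_{\text{window at } x}\varphi(\mu)=\sup_{\text{window at } 2g-2-x}\bigl(\varphi(\nu)+(2g-2-\nu)+1-g\bigr). \]
Since $\nu$ varies by at most $\varepsilon$ in its window, this supremum differs from $\sup\varphi(\nu)+x+1-g$ by at most $\varepsilon$. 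Letting $\varepsilon\to0$ gives $\Phi_C(x)=\Phi_C(2g-2-x)+x+1-g$.
\item For the Clifford bound with $0\le x\le 2g-2$: in the window, every $\mu$ satisfies $\varphi(\mu)\le \max(\mu/2+1,\,0,\,\mu+1-g)$, using whichever case of Lemma \ref{lemma:bn_prephi} applies to $\mu$. Points of the window with $\mu<0$ or $\mu>2g-2$ lie within $\varepsilon$ of $[0,2g-2]$. There $\mu+1-g$ and $0$ are at most $\mu/2+1+O(\varepsilon)$, since $\mu+1-g\le \mu/2+1$ exactly when $\mu\le 2g$, and $0\le \mu/2+1$ for $\mu\ge -2$. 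So the supremum is at most $x/2+1+O(\varepsilon)$, and the limit is at most $x/2+1$.
\end{itemize}

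\textbf{Part (2).} For the inequality, let $E$ be semistable with $\mu=\mu(E)\in\Q$. Then $\mu$ lies in every window around itself, so
\[ \Phi_C(\mu)\ge\varphi(\mu)\ge h^0(E)/\rk(E) \]
by the definition of $\varphi$.

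For upper-semicontinuity, set $S(x,\varepsilon)=\sup\{\varphi(\mu):\mu\in(x-\varepsilon,x+\varepsilon)\cap\Q\}$. Fix $x$ and $\delta>0$, and choose $\varepsilon$ with $S(x,\varepsilon)<\Phi_C(x)+\delta$. For any $y$ with $|y-x|<\varepsilon/2$, the window $(y-\varepsilon/2,y+\varepsilon/2)$ is contained in $(x-\varepsilon,x+\varepsilon)$. Therefore
\[ \Phi_C(y)\le S(y,\varepsilon/2)\le S(x,\varepsilon)<\Phi_C(x)+\delta, \]
which gives $\limsup_{y\to x}\Phi_C(y)\le\Phi_C(x)$.

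The only point needing care is the Clifford bound in part (1) at the endpoints $x=0$ and $x=2g-2$. There the window leaves $[0,2g-2]$, and the elementary comparison of the three branches above is what controls the supremum. Everything else is formal.
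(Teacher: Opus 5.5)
Your proof is correct and follows essentially the same route as the paper, which simply asserts that the lemma follows immediately from Lemma~\ref{lemma:bn_prephi} together with the definition of $\Phi_C$ as the upper envelope of $\varphi_C$. You check each property window by window: the Clifford bound near the endpoints $x=0$ and $x=2g-2$, and the standard envelope argument for upper-semicontinuity. This supplies exactly the details the paper leaves implicit.
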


\section{Tilting stability conditions} \label{sec:stab}

In this section we recall a construction of Bridgeland stability conditions on the derived category of coherent systems, following \cite{FN25}*{\textsection 3}. Through this section we let $C$ be a smooth, projective curve of genus $g \geq 1$. 

We start by considering the slope function $\mu$ on $\T_C$ given by
\[ \mu(V, E; \phi) = \begin{cases} \deg E/\rk E & \text{if }\rk E>0, \\ +\infty & \text{otherwise.} \end{cases} \]
An object $T$ in $\T_C$ is \emph{$\mu$-semistable} if $\mu(T') \leq \mu(T)$ for any non-zero subobject $T' \subseteq T$. A standard argument shows that every object in $\T_C$ admits a unique Harder--Narasimhan filtration by $\mu$-semistable objects. 

Fix a real number $b$. We let $\Ttors^b$ be the full subcategory of $\T_C$ generated by $\mu$-semistable objects of slope bigger than $b$; and we let $\Ftors^b$ be the full subcategory of $\T_C$ generated by $\mu$-semistable objects of slope smaller or equal than $b$. We get that $(\Ttors^b, \Ftors^b)$ is a torsion pair, and we denote $\A(b) = \langle \Ttors^b, \Ftors^b[1] \rangle$ the tilt.

\begin{teo}[\cite{FN25}*{Theorem 3.3}] \label{teo:stab_FN}
Given $(b, w) \in \R^2$ satisfying $w>\Phi(b)$, we have that $\sigma_{b, w} =(\A(b), Z_{b, w})$ is a Bridgeland stability condition, where
\[ Z_{b, w}\colon K^{\num}(\T_C) \to \C, \qquad Z_{b, w}(T) = (-\nbf(T)+w \rbf(T)) + i (\dbf(T)-b\rbf(T)) \]
is the central charge. This defines a two-dimensional continuous family of stability conditions on $\DC^b(\T_C)$.
\end{teo}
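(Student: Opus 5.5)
The statement is Theorem \ref{teo:stab_FN}. The proof follows the standard tilting template. There are three things to verify: that $\A(b)$ is the heart of a bounded t-structure; that $Z_{b,w}$ is a stability function on $\A(b)$; and that Harder--Narasimhan filtrations exist and the support property holds. Continuity in $(b,w)$ then comes from Bridgeland's deformation theorem.

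\textbf{The heart.} Since $(\Ttors^b,\Ftors^b)$ is a torsion pair in the abelian category $\T_C$, Happel--Reiten--Smal\o{} tilting gives that $\A(b)$ is the heart of a bounded t-structure on $\DC^b(\T_C)$. The input is the existence of $\mu$-Harder--Narasimhan filtrations in $\T_C$. These exist because $\T_C$ is noetherian, and $\mu$ satisfies the weak see-saw property with $\rbf\ge 0$ and $\dbf$ bounded above on subobjects with fixed rank.

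\textbf{Positivity of $\Im Z$.} On $\Ttors^b$ we have $\dbf-b\rbf>0$ whenever $\rbf>0$. The objects with $\rbf=0$ are either $E$ torsion of positive degree, or $\mu=+\infty$ objects with $\dbf=0$: these are $i_\ast V$ together with torsion sheaves of length $0$, so only $i_\ast V$ remains. For $\Ftors^b[1]$ we have $\dbf-b\rbf\le 0$ before the shift.

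\textbf{The key case $\Im Z=0$.} We must show $\Re Z<0$, i.e. $-\nbf+w\rbf<0$ after accounting for the shift. There are two subcases.
\begin{enumerate}
\item In $\Ttors^b$ with $\Im Z=0$, only $i_\ast V$ occurs, and there $Z=-\dim V<0$.
\item In $\Ftors^b[1]$, we need $F=(V,E;\phi)$ $\mu$-semistable with $\mu(F)=b$ to satisfy $\nbf(F)< w\,\rbf(F)$.
\end{enumerate}
Subcase (2) is the main obstacle, and it is where the Brill--Noether function enters. For a $\mu$-semistable $F$ of slope $b<\infty$, I will first show that $E$ is a semistable sheaf of slope $b$ (a destabilizing subsheaf $E'$ gives the subobject $(0,E';0)$). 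Next I will show that $\phi$ induces an injection $V\hookrightarrow H^0(E)$. Indeed, the kernel $K$ of $V\to H^0(E)$ gives a subobject $(K,0;0)$ of slope $+\infty$, contradicting semistability. Hence $\nbf(F)\le h^0(E)\le \Phi(b)\,\rk E<w\,\rbf(F)$. When $b$ is irrational no such objects exist; this is why $\Phi$ is defined via the $\limsup$, which is needed later for the support property.

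\textbf{HN property.} $Z$ takes values in a discrete lattice only when $b,w$ are rational. I would therefore first treat rational $(b,w)$ via the standard noetherianity argument for tilted hearts, as in Bridgeland's K3 paper and in Bayer--Macr\`i--Toda. Then I would extend to all $(b,w)$ by deformation, once the support property is established.

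\textbf{Support property.} I would seek a quadratic form $Q$ on $K^{\num}\cong\Z^3$ that is negative definite on $\ker Z_{b,w}$ and nonnegative on all $\sigma_{b,w}$-semistable objects, for example a form built from $\Phi$. A cleaner route is to construct it locally: on a small neighbourhood of $(b_0,w_0)$, use a linear bound $\nbf\le (\Phi(b_0)+\delta)\rbf+c(\dbf-b_0\rbf)$ for semistable objects. This bound is justified by the upper semicontinuity of $\Phi$ and the slope estimates of Lemma \ref{lemma:bn_prephi}, combined with an induction on walls.

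I expect this last step, the uniform BN-type bound on $\sigma_{b,w}$-semistable objects (not just $\mu$-semistable ones), to be the hardest part. I would first try to reduce it to the large-$w$ limit, where $\sigma_{b,w}$-stability agrees with $\mu$-stability, and then argue that walls cannot accumulate.
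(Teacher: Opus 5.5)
Your treatment of the heart and of the stability function is correct. It fleshes out the justification the paper sketches after the statement: a $\mu$-semistable $(V,E;\phi)$ of slope $b$ has $E$ semistable and $V\hookrightarrow H^0(C,E)$, whence $\nbf<w\,\rbf$. For the HN property, only discreteness of $\Im Z_{b,w}=\dbf-b\rbf$ is needed, so rational $b$ suffices and $w$ need not be rational.

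The gap is the support property, and the ``cleaner route'' you propose would fail. The bound $\nbf\le(\Phi(b_0)+\delta)\rbf+c(\dbf-b_0\rbf)$ is false for semistable objects. Take $b_0$ rational and $E$ semistable of slope $b_0$. The object $(0,E;0)[1]\in\A(b_0)$ has $Z_{b_0,w}=-w\rk E$, so it has phase $1$ and is semistable, yet the bound reads $0\le-(\Phi(b_0)+\delta)\rk E$. The underlying problem is that one linear inequality cuts out a half-space, which always contains one of the two rays of $\ker Z_{b,w}=\R\cdot(1,b,w)$. So it cannot keep semistable classes away from the kernel. Separate linear bounds for $\rbf>0$ and $\rbf<0$ do not help either. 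At a wall the Jordan--H\"older factors of an object can have ranks of opposite signs, and such piecewise conditions are not closed under extensions of objects of equal phase. A wall-crossing induction therefore cannot propagate them.

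The missing idea is the sign-symmetric quadratic form of Lemma \ref{lemma:stab_quad} (following \cite{FN25}*{Lemma 3.7}), $Q(r,d,n)=s(d-b_0r)^2+(w_0-t)r^2-nr$. It says that $\nbf/\rbf$ is bounded by a parabola in $\dbf/\rbf$, which handles both signs of $\rbf$ and both sides of $b_0$ at once. One chooses $s,t>0$ with $s(x-b_0)^2+w_0-t>\Phi(x)$ for $x\neq b_0$ and $w_0-t>\Phi(b_0)$. This is possible because $\Phi$ is upper semicontinuous and grows at most linearly (Lemma \ref{lemma:bn_prephi}).

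Since $Q(1,b_0,w)=w_0-t-w<0$, the form is negative definite on $\ker Z_{b_0,w}$ for every $w\ge w_0$. This gives the closure property you need. Suppose $Z(v_1)=\lambda Z(v_2)$ with $\lambda>0$ and $Q(v_1),Q(v_2)\ge0$. Then $Q(v_1-\lambda v_2)\le 0$ forces the bilinear form to satisfy $Q(v_1,v_2)\ge0$, hence $Q(v_1+v_2)\ge0$. So $Q\ge0$ descends from the large-$w$ limit through every wall, by induction on the discrete value $\Im Z_{b_0,w}>0$.

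Be careful that the large-$w$ semistable objects are not only $\mu$-semistable ones. They are $(V,E;\phi)$ with $E$ semistable, or complexes with $H^0=i_\ast\C^m$ and $H^{-1}=(V,E;\phi)$ with $E$ semistable. In both cases $V\hookrightarrow H^0(C,E)$, and $\mu(E)\neq b_0$ when $\Im Z>0$, so $Q\ge0$ follows from $h^0(C,E)\le\Phi(\mu(E))\rk E$. The phase-one objects are handled by $w_0-t>\Phi(b_0)$. Irrational $b$ then follows by deformation, as you say.
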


We will not give a proof of this result, and refer to \cite{FN25}*{pp. 13--4}. Let us briefly justify the bound $w>\Phi_C(b)$ for the stability conditions. Given $T=(V, E; \phi)$, we have that $\Im(T) = \rk(E) \cdot (\mu(E)-b)$. Thus, if $E$ is slope-semistable of slope $>b$ (resp. $\leq b$), then $\Im(T)>0$ (resp. $\Im(T[1]) \geq 0$).

Assume now that $b$ is rational. Given a semistable vector bundle $E$ of slope $b$, then the object $(H^0(C, E), E; \ev_E)[1] \in \A(b)$ has $Z_{b, w}(E) = h^0(C, E)-w \rk(E)$. We thus need $w > \varphi_C(b)$ to guarantee that $Z_{b, w}$ is a central charge on $\A(b)$. A deformation argument forces us to get $w>\Phi_C(b)$.

\begin{lemma}[cf. \cite{FN25}*{Lemma 3.7}] \label{lemma:stab_quad}
Let $b, w_0$ be rational numbers satisfying $w_0>\Phi_C(b)$. Fix some $s, t>0$ such that $s(x-b_0)^2 + w_0-t > \Phi_C(x)$ holds for any real number $x \neq b_0$. Set $Q(r, d, n) = s(d-b_0r)^2 + r^2(w_0-t)-nr$. 
\begin{enumerate}
\item For any $w\geq w_0$, the quadratic form $Q$ is negative definite on $\ker Z_{b, w}$.
\item For any $w \geq w_0$, we have that $Q(T) \geq 0$ for any $\sigma_{b, w}$-semistable object $T\in \A(b)$ with $\Im Z_{b, w}(E)>0$. 
\item Assume additionally that $w_0-t>\Phi_C(b)$. Then $\sigma_{b, w}$ satisfies the support property with respect to $Q$, for any $w \geq w_0$. 
\end{enumerate}
\end{lemma}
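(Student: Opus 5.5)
The proof handles each part by rewriting $Q$ in terms of the slope. Throughout, the $b_0$ in the statement is the $b$ of $\sigma_{b,w}$. For $r\neq 0$ and $\mu=d/r$ we have
\[ Q(r,d,n)=r^2\bigl(s(\mu-b)^2+w_0-t\bigr)-nr. \]
When $r>0$, this says $Q\ge 0$ exactly when $n/r\le s(\mu-b)^2+w_0-t$. By the choice of $s,t$, this holds as soon as $n/r\le \Phi_C(\mu)$ and $\mu\neq b$. This reduces every part to Brill--Noether type bounds.

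\emph{Part (1).} The kernel of $Z_{b,w}$ is cut out by $n=wr$ and $d=br$. On it, $Q=r^2(w_0-t-w)$. This is strictly negative unless $r=0$, and $r=0$ forces $d=n=0$. Since $w\ge w_0>w_0-t$, the form $Q$ is negative definite on $\ker Z_{b,w}$.

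\emph{Part (3), granting (2).} The support property for $\sigma_{b,w}$ with respect to $Q$ needs two things: negative definiteness on the kernel, which is (1), and $Q(T)\ge 0$ for every semistable $T$. Objects with $\Im Z>0$ are covered by (2). It remains to treat semistable objects with $\Im Z=0$ in $\A(b)$. These are built from the following pieces:
\begin{itemize}
\item $i_\ast V$, for which $Q=0$;
\item torsion sheaves, which have $\rbf=0$, hence $Q=s\,\dbf^2\ge0$;
\item shifts $F[1]$ with $F\in\Ftors^b$ $\mu$-semistable of slope exactly $b$.
\end{itemize}
For $F[1]$, the quadratic form is even, so we may compute with $F$ itself, getting $Q=r^2(w_0-t)-nr$. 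For $F=(V,E;\phi)$, semistability of slope $b$ forces $\phi$ to be essentially injective on global sections; otherwise $i_\ast$ of the kernel would violate it. Hence
\[ n/r\le h^0(E)/\rk E\le\Phi_C(b)<w_0-t, \]
so $Q>0$. I would check that semistable objects of phase $1$ are extensions of these pieces with compatible signs, so that $Q\ge 0$ persists.

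\emph{Part (2).} I would argue by wall-crossing in $w$, in the style of Bayer--Macr\`i--Toda.

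First, the large volume limit $w\gg 0$. I would show that a $\sigma_{b,w}$-semistable $T$ with $\Im Z>0$ lies in $\Ttors^b$ and is a $\mu$-semistable coherent system $(V,E;\phi)$ with $V\to H^0(E)$ injective. Injectivity holds because $i_\ast(\ker)$ is a subobject of phase $1$. Let $E_i$ be the HN factors of $E$, all of slope $>b$. Then
\[ n\le h^0(E)\le\sum_i \rk(E_i)\,\Phi_C(\mu(E_i)) \le \sum_i \rk(E_i)\bigl(s(\mu_i-b)^2+w_0-t\bigr). \]
By convexity (Jensen) the right-hand side is at least $r\bigl(s(\mu-b)^2+w_0-t\bigr)$, which gives $Q\ge 0$. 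The case $r=0$ is trivial.

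Then decrease $w$ towards $w_0$. Walls are locally finite, since the family is a continuous family of Bridgeland stability conditions. At a wall, $T$ has Jordan--H\"older factors whose central charges are aligned with $Z(T)$, and these factors satisfy $Q\ge0$ by induction. Because $Q$ is negative definite on $\ker Z_{b,w}$, the standard lemma (the set $\{Q\ge0\}$ meets each ray of a given phase in a convex cone) gives $Q(T)\ge 0$ for sums of such classes.

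The main obstacle is the large volume step: proving that for $w\gg0$, depending on $\vbf(T)$, semistable objects have no $H^{-1}$ part. I would try the usual argument, comparing the phase of the subobject $H^{-1}(T)[1]$ with the phase of $T$ as $w\to\infty$. I would also check the boundary behaviour at objects with $\mu(E_i)$ close to $b$, where strictness of the bound on $\Phi_C$ is needed.
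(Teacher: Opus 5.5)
Your architecture for (2) matches the paper's, which follows \cite{FN25}*{Lemma 3.7}: induct on $\Im Z_{b,w}$ by crossing walls in $w$, and reduce to objects that stay semistable for all $w'\geq w$. Part (1) and the phase-one computation in (3) are fine. One slip there: torsion sheaves have $\Im Z=\dbf>0$, so they do not belong in the phase-one list.

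The gap is the large volume step. You plan to prove that for $w\gg0$ a semistable $T$ with $\Im Z>0$ has no $H^{-1}$ and lies in $\Ttors^b$. This is false. Every object of $\Ttors^b$ has $\rbf\geq 0$, but there are objects with $\rbf<0$ and $\Im Z>0$ that are semistable for all large $w$. For example, the paper's $(\C,\O_C(p+q);\id)[1]$ has $\vbf=(-1,-2,-1)$ and $\Im Z_{3,w}=1$, and it is $\sigma_{3,w}$-stable for every $w>2$. Comparing $H^{-1}(T)[1]\subset T$ with $T$ only excludes $H^{-1}$ when $\rbf(T)\geq 0$. When $\rbf(T)<0$, both phases tend to $1$ and nothing is contradicted. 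So your induction has no base case for any class with $\rbf<0$. Hence (2) is not established, and neither is (3), which you derive from it.

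The missing piece is the second large-volume family, and this is exactly where the paper uses the hypothesis $\Im Z>0$. The argument for it runs as follows.

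\begin{enumerate}
\item Suppose $\rbf(T)<0$, so the phase of $T$ tends to $1$. Now $H^0(T)$ is a quotient of $T$ in $\A(b)$. Its phase is fixed and $<1$ if it has rank zero with nonzero sheaf part, and tends to $0$ if it has positive rank. Hence $H^0(T)=i_\ast\C^k$.
\item Let $F_1$ be the first $\mu$-HN factor of $H^{-1}(T)$. The subobject $F_1[1]\subset T$ has slope $-\Re Z/\Im Z$ asymptotic to $w/(b-\mu(F_1))$, or phase exactly $1$ if $\mu(F_1)=b$. For $T$ the asymptotic slope is $w/(b-\mu(H^{-1}(T)))$. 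So $H^{-1}(T)=(V,E;\phi)$ is $\mu$-semistable, which makes $E$ a semistable bundle with $V\hookrightarrow H^0(C,E)$.
\item Since $\Im Z(T)=\rk(E)\,(b-\mu(E))>0$, we get $\mu(E)<b$ strictly. So the hypothesis on $s,t$ applies at $x=\mu(E)$ even when $w_0-t\leq\Phi_C(b)$. This is the situation the paper needs, with $w_0-t=1.9$ and $\Phi_C(3)\geq h^0(L_1)=2$.
\item With $\vbf(T)=(-\rk E,-\deg E,k-\dim V)$ and $\dim V\leq h^0(C,E)\leq \rk(E)\,\Phi_C(\mu(E))$, this gives
\[ Q(T)=\rk(E)^2\bigl(s(\mu(E)-b)^2+w_0-t\bigr)-(\dim V-k)\rk(E)>0. \]
\end{enumerate}
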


\begin{proof}
Part (1) is clear, and part (3) is claimed in \cite{FN25}*{Lemma 3.7}. For (2), we follow the proof of \cite{FN25}*{Lemma 3.7}. Assume that $R \neq 0$ is a $\sigma_{b, w}$-semistable object with $\Im Z_{b, w}(T)>0$. If $T$ remains $\sigma_{b, w'}$-semistable for all $w' \geq w$, then \cite{FN25}*{Lemma 3.7} applies. Thus either $T=(V, E; \phi)$ with $E$ a semistable vector bundle; or $H^0(T)=(\C^{r}, 0; 0)$ and $H^{-1}(T)=(V, E; \phi)$ with $E$ a semistable vector bundle. But in both cases the assumption $\Im Z_{b, w}(T)>0$ ensures that $\mu(E) \neq b$. This way, the proof of \cite{FN25}*{Lemma 3.7} applies verbatim.
\end{proof}

\section{Genus four curves} \label{sec:genfour}

In this section we will study the geometry of general genus four curves. We will first recall their canonical embedding, and use it to relate them with nodal cubic threefolds, following the exposition of \citelist{\cite{vdGK10}*{\textsection 2} \cite{AK25}*{App. A}}. We will then describe their (higher rank) Brill--Noether theory, following \cite{LN17}. 

Fix a smooth, projective curve $C$ of genus four. We start by considering the canonical map $\phi=\phi_{\abs{K_C}}\colon C \to \P^3$. We assume that $C$ is non-hyperelliptic, so that $\phi$ is an embedding (e.g. by \cite{Har77}*{Proposition IV.5.2}). We thus identify $C$ with its image inside $\P^3$, which is a $(2, 3)$-complete intersection, by \cite{Har77}*{Example IV.5.2.2}. We fix $Q \subset \P^3$ to be the unique quadric surface containing $C$.

\begin{claim} \label{claim:genfour_quadric}
The quadric $Q$ corresponds to a quadratic form of rank 3 or 4. This follows immediately from the fact that $C$ is not contained in a plane. Moreover, the general curve of genus four is contained in a smooth quadric surface, e.g. by \cite{Har77}*{Exercise IV.5.3}.
\end{claim}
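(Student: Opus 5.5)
The claim has two parts: the quadric $Q$ has rank $3$ or $4$, and for a general genus four curve $Q$ is smooth. I will handle them separately, both through the equations of the canonical curve.

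For the rank statement, I argue by contradiction on the rank of the quadratic form $q$ defining $Q$. The form $q$ is nonzero, since $Q$ is an honest quadric surface (the unique one containing $C$, as $C$ is a $(2,3)$-complete intersection).
\begin{itemize}
\item If $\rk q = 1$, then $Q = 2H$ is a double plane, so $Q_{\mathrm{red}}$ is a plane $H$.
\item If $\rk q = 2$, then $Q = H_1 \cup H_2$ is a union of two planes.
\end{itemize}
In both cases the irreducible curve $C \subset Q$ lies in a plane. This contradicts nondegeneracy: the canonical embedding is given by the complete linear system $\abs{K_C}$, whose sections $H^0(C,\omega_C)$ span a space of dimension $4$, so no linear form vanishes on $C$. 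Hence $\rk q \in \{3,4\}$. Rank $4$ corresponds to a smooth quadric $\P^1 \times \P^1$, and rank $3$ to a quadric cone.

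For the generality statement, I would use the standard dictionary with $g^1_3$'s. The rulings of $Q$ cut out the trigonal pencils on $C$: each line of a ruling meets the $(2,3)$ complete intersection $C$ in a divisor of degree $3$.
\begin{itemize}
\item If $Q$ is smooth, the two rulings give two distinct $g^1_3$'s, $L_1$ and $L_2$, with $L_1 \otimes L_2 \cong \omega_C$.
\item If $Q$ is a cone, there is a single $g^1_3$, namely $L$ with $L^{\otimes 2} \cong \omega_C$, i.e.\ a vanishing theta-null.
\end{itemize}
It then suffices to show that the cone case is a proper closed condition in moduli. The cleanest route is a dimension count. The locus of genus four curves with a vanishing theta-null is the divisor in $M_4$ cut out by $\det q = 0$, where $q$ is the quadric in the family of canonical curves. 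It is proper because one exhibits a single curve on a smooth quadric. Such a curve exists: a general member of $\abs{\O(3,3)}$ on $\P^1 \times \P^1$ is a smooth curve of genus $(3-1)(3-1) = 4$, by Bertini and adjunction. Its canonical embedding is the restriction of the Segre embedding, since $K_C = \O(1,1)|_C$ by adjunction. Therefore it lies on a smooth quadric.

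Since $\det q$ is a nonvanishing function on the irreducible parameter space of canonical genus four curves (the complete intersections of a quadric and a cubic), the general curve has $\rk q = 4$.

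The only delicate point is that the relevant parameter space is irreducible, so that "general" is meaningful. This holds because smooth $(2,3)$-complete intersections form an open subset of a projective bundle over the space of quadrics.
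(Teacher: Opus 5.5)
Your proof is correct. The rank statement is argued exactly as in the paper: ranks $1$ and $2$ would force the irreducible curve $C$ into a plane, contradicting nondegeneracy of the canonical embedding.

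For the generality statement your route differs from the paper's. The paper only cites \cite{Har77}*{Exercise IV.5.3}, which is a parameter count. Modulo projective automorphisms, curves cut out by cubics on a smooth quadric form a family of dimension $15-6=9$. Those on a quadric cone form a family of dimension $15-7=8$. Hence a general genus four curve lies on a smooth quadric.

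You replace this count by a soft argument:
\begin{enumerate}
\item The space of smooth $(2,3)$-complete intersections is irreducible, being open in a $\P^{15}$-bundle over the $\P^9$ of quadrics.
\item The locus $\det q=0$ is closed.
\item A single smooth $(3,3)$-curve on $\P^1\times\P^1$, whose canonical model is its Segre image, shows this locus is proper.
\end{enumerate}
This is more self-contained, needs no knowledge of automorphism groups of quadrics, and uses just one example. The count gives more: the exact dimension $8$ of the cone locus and, with the hyperelliptic count, $\dim M_4=9$. Your aside that the cone locus is a divisor is not needed for the claim.

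Two small points to tighten:
\begin{itemize}
\item $\det q$ is a section of a line bundle pulled back from $\P^9$, not a function. Its zero locus is still well defined, so the argument is unaffected.
\item To pass from a general point of the parameter space to a general curve, say explicitly that every non-hyperelliptic genus four curve occurs in the family. Also note that singularity of $Q$ depends only on $C$. Then the bad locus is $\mathrm{PGL}_4$-invariant, and its image in moduli is a proper closed subset.
\end{itemize}
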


\begin{remark}
For the remainder of this note, we will say that a genus four curve $C$ is \emph{general} if it is non-hyperelliptic, and the associated quadric $Q$ is smooth. 
\end{remark}

Assume that $C$ is a general curve of genus four. The linear system of cubic surfaces in $\P^3$ containing $C$ defines a rational map $\P^3 \dashrightarrow \P^4$.

\begin{claim}[\cite{vdGK10}*{p. 28}] \label{claim:genfour_blowups}
The map $\P^3 \dashrightarrow \P^4$ can be resolved by blowing-up $\P^3$ along $C$. The induced map $X=\Bl_C \P^3 \to \P^4$ factors as $X \to Y \to \P^4$, where $X \to Y$ is the contraction of the strict transform of $Q$ to a point, and $Y \hookrightarrow \P^4$ is a cubic threefold with a single node. 

This process can be reversed. If $Y \subset \P^4$ is a cubic threefold with a single node $p \in Y$, then projection away from $p$ defines a birational map $Y \dashrightarrow \P^3$. This map is resolved by blowing up $p$, to get $X=\Bl_p Y \to \P^3$. The map contracts the strict transform $E$ of the surface spanned by lines passing through $p$.
\end{claim}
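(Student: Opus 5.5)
Claim \ref{claim:genfour_blowups} is classical (the paper cites \cite{vdGK10}), and I would prove it as a computation with the linear system of cubics through $C$. Let $\pi\colon X=\Bl_C\P^3\to\P^3$ have exceptional divisor $E_C$, and set $H=\pi^*\O(1)$.

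\emph{Resolution.} Since $C$ is a $(2,3)$-complete intersection, its ideal sheaf is generated by the quadric $q$ and a cubic $f$. Hence $I_C(3)$ is globally generated. Its sections are $\langle xq,yq,zq,wq,f\rangle$, so $h^0(I_C(3))=5$. It follows that $|3H-E_C|$ is base point free on $X$ and defines a morphism $\psi\colon X\to\P^4$ resolving $\P^3\dashrightarrow\P^4$.

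\emph{Degree and image.} Using $H^3=1$, $H^2E_C=0$, $HE_C^2=-\deg C=-6$, and $E_C^3=-\deg N_{C/\P^3}=-(4\cdot 6+2g-2)=-30$, I get $(3H-E_C)^3=27-27\cdot 6\cdot\tfrac{1}{3}\cdots$, computed carefully as
\[ 27-0+9HE_C^2-E_C^3=27-54+30=3. \]
The map is birational: a general fibre would otherwise be a positive-dimensional union of trisecant-type lines, but the cubics through $C$ separate general points because $f$ is not a multiple of $q$. So the image $Y$ is a cubic threefold.

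\emph{Contraction of $Q$.} The strict transform $\tilde Q$ has class $2H-E_C$. On each ruling line $\ell\subset Q$ (of type $(1,0)$ or $(0,1)$), $C\cdot\ell=3$ because $C$ has type $(3,3)$ on $Q\cong\P^1\times\P^1$. Hence $(3H-E_C)\cdot\tilde\ell=3-3=0$, and every ruling line is contracted. Since both rulings are contracted, $\tilde Q$ maps to a point $p$. By Zariski's main theorem $\psi$ is an isomorphism away from $\tilde Q$. The normal bundle of $\tilde Q\cong\P^1\times\P^1$ is $(2H-E_C)|_{\tilde Q}$, which has bidegree $(2,2)-(3,3)=(-1,-1)$. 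This is exactly the small-resolution/blow-up picture of an ordinary double point: contracting a quadric with normal bundle $\O(-1,-1)$ yields a node. So $Y$ has a single node at $p$ and is smooth elsewhere, because $X\setminus\tilde Q$ is smooth.

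\emph{Converse.} For a nodal cubic $Y$ with node $p$, projection from $p$ is birational, since a general line through $p$ meets $Y$ in exactly one further point. Blowing up $p$ resolves it, giving $\Bl_pY\to\P^3$. The exceptional divisor $\cong$ the projectivized tangent cone, a smooth quadric, maps isomorphically onto a quadric $Q\subset\P^3$.

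Write $Y=\{x_0q_2+q_3=0\}$. The lines through $p$ are exactly the directions where $q_2=q_3=0$, which is the curve $C=Q\cap\{q_3=0\}$, of genus $4$. Their union, the cone $E$, is contracted to $C$. Identifying the map with the inverse of the previous construction follows by checking that it is the blow-up of $\P^3$ along $C$. I expect the main obstacle to be this final identification, which requires checking that the fibres over $C$ are exactly the lines, and that the contraction of $\tilde E$ is the smooth blow-down; in the forward direction, I expect it to be verifying the node type via the $(-1,-1)$ normal bundle.
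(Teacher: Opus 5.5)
\emph{The proposal has a circular step in the forward direction and leaves the key identification in the converse unproved, though both are fixable.}

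The paper gives no argument of its own here and simply cites \cite{vdGK10}; the classical argument works with the explicit equation of $Y$. Your intersection-theoretic numerics are correct: $h^0(I_C(3))=5$, base-point-freeness of $|3H-E_C|$, $(3H-E_C)^3=3$, $(3H-E_C)\cdot\tilde\ell=0$ for both rulings, and $N_{\tilde Q/X}\cong\O(-1,-1)$. However, two steps are not actually proved.

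First, ``by Zariski's main theorem $\psi$ is an isomorphism away from $\tilde Q$'' needs three hypotheses you never check: $\psi$ quasi-finite on $X\setminus\tilde Q$, $\psi^{-1}(p)=\tilde Q$, and $Y\setminus\{p\}$ normal. The first two are easy. Write $3H-E_C=H+\tilde Q$; hyperplanes through $p$ pull back to $\tilde Q+|H|$ with $|H|$ base-point-free. A curve $\Gamma\not\subset\tilde Q$ with $(H+\tilde Q)\cdot\Gamma=0$ would be a fibre of $E_C\to C$, on which $3H-E_C$ has degree $1$. Normality is the real issue. Without it, ZMT only says $X\setminus\tilde Q$ normalizes $Y\setminus\{p\}$, and you then deduce smoothness of $Y\setminus\{p\}$ from the very isomorphism that required it, which is circular. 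The same normality is needed at $p$ before ``a quadric with normal bundle $\O(-1,-1)$ contracts to a node'' says anything about this particular hypersurface $Y$.

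Your birationality step is also garbled: a general fibre of a generically finite map is finite. The clean version is $\deg\psi\cdot\deg Y=3$ with $Y$ non-degenerate, or simply that projection from $[0:0:0:0:1]$ undoes $\psi$.

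Second, in the converse you leave open exactly the identification $\Bl_pY\cong\Bl_C\P^3$ that ``this process can be reversed'' asserts. You also never use the single-node hypothesis to get $C=\{q_2=q_3=0\}$ smooth, hence of genus $4$.

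The missing ingredient is the explicit equation you already wrote down in the converse. For $C=\{q=f=0\}$, the cubics through $C$ give $u\mapsto[u_0q(u):\dots:u_3q(u):f(u)]$. Its image satisfies $y_4\,q(y_0,\dots,y_3)=f(y_0,\dots,y_3)$, and projection from $p=[0:0:0:0:1]$ is visibly its inverse.

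\begin{itemize}
\item \emph{Singularities of $Y$.} Apply the Jacobian criterion to $y_4q-f$. A singular point with $(y_0,\dots,y_3)\neq 0$ would lie over a point of $C$ at which $y_4\,dq=df$, contradicting smoothness of the complete intersection $C$. At $p$ the local equation is $q-f$, whose quadratic part is nondegenerate because $Q$ is smooth. So $Y$ has exactly one singular point, a node.
\item \emph{Converse.} Running the same computation on $x_0q_2+q_3$ shows that, for a cubic with a node at $p$, having no further singular points is equivalent to smoothness of $C$.
\item \emph{Identification.} $\Bl_C\P^3$ and $\Bl_pY$ are both the closure in $\P^3\times\P^4$ of the graph of this one birational map. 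The first is because $u_iq$ and $f$ generate $I_C$; the second is because $\Bl_p\P^4$ is the graph closure of the projection from $p$. Hence they coincide over both $\P^3$ and $Y$.
\end{itemize}

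This yields the isomorphism off $\tilde Q$, the contraction of $\tilde Q$ to $p$, and the identification of $E_C$ with the strict transform of the cone of lines through $p$. It needs no appeal to Zariski's main theorem or to the analytic classification of $\O(-1,-1)$ contractions.
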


The two descriptions of Claim \ref{claim:genfour_blowups} fit in the following diagram.
\begin{equation} \label{eq:genfour_blowups}
\begin{tikzcd}
Q  \arrow[r, hook] \arrow[d] & \Bl_pY \arrow[r, equal, "=:X"'] \arrow[d, "\pi"] & \Bl_C \P^3 \arrow[d, "\alpha"'] & B \arrow[l, hook'] \arrow[d, "\beta"] \\ \{p\} \arrow[r, hook] & Y & \P^3 & C \arrow[l, hook', "i"']
\end{tikzcd}
\end{equation}
We point out that $\Pic(X) \cong \Z^2$, generated by $F$ and $B$, where $F$ is the pullback of an hyperplane in $\P^3$. If $H$ is the pullback of the hyperplane class in $\P^4$, then we have the relations $H=3F-B$ and $Q=2H-B$ in $\Pic(C)$.

We now turn into the Brill--Noether theory of a general genus four curve $C$. As before, we identify $C$ with its image in $\P^3$ via the canonical embedding. We let $Q \subset \P^3$ be the smooth quadric surface containing $C$.

\begin{lemma}
Let $L_1, L_2$ be the two line bundles on $C$ induced by the two rulings of $Q \subset \P^3$. Then $L_1$ and $L_2$ are non-isomorphic line bundles of degree three with $h^0(C, L_i)=2$. Moreover, they are the only Brill--Noether--Petri extremal line bundles of $C$.
\end{lemma}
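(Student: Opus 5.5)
The plan has two parts: first compute the invariants of $L_1, L_2$ from the geometry of $Q \cong \P^1 \times \P^1$, then classify BNP extremal line bundles by a dimension count in the Petri map.

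\emph{Invariants of $L_1, L_2$.} Since $C \subset Q$ is a $(2,3)$-complete intersection, its class on $Q \cong \P^1\times\P^1$ is $(3,3)$. The two rulings give $\O_Q(1,0)$ and $\O_Q(0,1)$, and we set $L_1 = \O_Q(1,0)|_C$ and $L_2 = \O_Q(0,1)|_C$. Each has degree $(1,0)\cdot(3,3) = 3$. For the sections, I will restrict along
\[ 0 \to \O_Q(-2,-3) \to \O_Q(1,0) \to L_1 \to 0. \]
Künneth gives $H^0(\O_Q(-2,-3)) = H^1(\O_Q(-2,-3)) = 0$, so $H^0(L_1) = H^0(\O_Q(1,0)) \cong \C^2$, and likewise for $L_2$. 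Because $\omega_C = \O_Q(1,1)|_C$, we get $L_1 \otimes L_2 \cong \omega_C$, hence $L_2 \cong \omega_C \otimes L_1^\vee$. If $L_1 \cong L_2$, then $L_1^2 \cong \omega_C$ and $H^0(L_1)\otimes H^0(L_1) \to H^0(\omega_C)$ would factor through $\Sym^2$, which is $3$-dimensional. Its image would then lie in a $3$-dimensional subspace, so $C$ would lie on a quadric cone (rank $3$), contradicting smoothness of $Q$. Alternatively, one can compute $h^0(\O_Q(1,-1)|_C)=0$ from the analogous restriction sequence.

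\emph{BNP extremality of $L_1$ and $L_2$.} The Petri map for $L_1$ is
\[ H^0(L_1)\otimes H^0(L_2) \to H^0(\omega_C), \]
which is $4\to 4$. It is the restriction to $C$ of the Segre multiplication $H^0(\O(1,0))\otimes H^0(\O(0,1)) \xrightarrow{\sim} H^0(\O_Q(1,1)) = H^0(\O_{\P^3}(1))$. This composite is injective because $C$ is not contained in a plane (it is canonically embedded), so the map is an isomorphism.

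\emph{No other BNP extremal line bundles.} Let $L$ be BNP extremal with $r+1 = h^0(L)$ and $s+1 = h^0(\omega_C\otimes L^\vee)$. Extremality forces $(r+1)(s+1) = 4$. If $\{r+1,s+1\} = \{1,4\}$, then $L$ or $\omega_C\otimes L^\vee$ has $4$ sections. By Riemann--Roch or Clifford, a line bundle of degree $\le 6$ on a non-hyperelliptic genus $4$ curve has $h^0 \le 3$ unless it equals $\omega_C$. So the bundle is $\omega_C$, and $L$ is $\O_C$ or $\omega_C$. I need to decide whether these count as extremal; presumably the convention in \cite{AK25} excludes them, or $\afrak$ is trivial in that case, and I will check that definition. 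Otherwise $r = s = 1$, so $h^0(L) = h^0(\omega_C\otimes L^\vee) = 2$, and Riemann--Roch gives $\deg L = 3$. Thus $L$ is a $g^1_3$. The fibers of a $g^1_3$ span lines meeting $C$ in $3$ points, hence lying on $Q$ (a quadric meeting a line in $3$ points contains it). So $L$ is cut out by a ruling and equals $L_1$ or $L_2$.

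The main obstacle is this last step: pinning down the precise convention for the degenerate cases $L=\O_C,\omega_C$, and justifying cleanly that every $g^1_3$ comes from a ruling. For the latter I would use the geometric Riemann--Roch statement that the divisors of a base-point-free $g^1_3$ are collinear in the canonical embedding.
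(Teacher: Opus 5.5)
Your proposal is correct and follows the paper's route: reduce a BNP extremal $L$ to a $g^1_3$ and identify the $g^1_3$'s with the two rulings of $Q$. The paper does the last step by citing Hartshorne's Example IV.5.5.2, you do it via geometric Riemann--Roch, and you also verify that $L_1, L_2$ really are BNP extremal, which the paper leaves implicit.

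Your worry about the case $\{h^0(L), h^1(L)\} = \{1,4\}$ is justified. With the definition as stated in this paper, $\O_C$ and $\omega_C$ satisfy the Petri condition trivially, and the paper's proof discards them without comment by asserting $h^0(L)=2$. So the uniqueness claim has to be read as excluding these two trivial cases, and with that reading your argument is complete.
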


\begin{proof}
The first part is a direct computation. For the second one, note that a BNP extremal line bundle $L$ must have degree 3, $h^0(C, L)=2$, and be globally generated. Thus, we have that $\abs{L}$ defines a degree 3 morphism into $\P^1$. The result follows now by \cite{Har77}*{Example IV.5.5.2}.
\end{proof}

Recall the definition of the \emph{Brill--Noether function} in Section \ref{sec:bn}. Under our assumptions on $C$, we can give a better bound for $\Phi$.

\begin{prop} \label{prop:genfour_bound}
Let $C$ be a general curve of genus four. We have that
\[ \Phi_C(x) \leq \begin{cases} 1 & x=0, \\ \frac{1}{4}x + \frac{3}{4} & 0<x<2, \\ \frac{1}{3}x + \frac{2}{3} & 2\leq x<\frac{5}{2}, \\ \frac{1}{2}x + \frac{1}{4} & \frac{5}{2} \leq x <3, \\ 2 & x=3. \end{cases} \]
\end{prop}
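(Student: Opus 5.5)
The plan is to bound $\varphi_C(\mu)$ for each rational $\mu\in[0,3]$, that is, to bound $h^0(C,E)/\rk E$ for a slope-semistable $E$ of slope $\mu$, and then pass to $\Phi_C$. Since the right-hand side is a piecewise linear function that is upper semicontinuous (at each breakpoint it takes the larger of the two adjacent values), a pointwise bound on $\varphi_C$ over $\Q$ will give the same bound on the limsup $\Phi_C$.

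\emph{Reductions.} By a Jordan--H\"older filtration and subadditivity of $h^0$ in short exact sequences, it suffices to treat stable bundles. The endpoints are easy.
\begin{enumerate}
\item If $\mu=0$, every section of a stable $E$ gives $\O_C\hookrightarrow E$ with the same slope, which forces $E\cong\O_C$. Hence $\varphi(0)=1$.
\item If $\mu=3$, then $\mu=g-1$. Lemma \ref{lemma:bn_prephi}(4), the higher rank Clifford theorem, gives $\varphi(3)\le 5/2$. To reach $2$ I would use Clifford-index theory: for a general genus four curve $\mathrm{Cliff}(C)=1$, and Lange--Newstead \cite{LN17} show the rank-$n$ Clifford indices $\gamma'_n$ all equal $1$. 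This gives $d-2(h^0-r)\ge r$ whenever $h^0\ge 2r$, i.e. $h^0\le (d+r)/2=2r$ at $\mu=3$. When $h^0<2r$ the bound holds trivially.
\end{enumerate}

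\emph{The range $0<\mu<2$.} Here I would use the known small-slope Brill--Noether bound $h^0(E)\le r+(d-r)/g$. This is due to Brambila-Paz--Grzegorczyk--Newstead for $\mu\le1$, and to Mercat for $1\le\mu\le2$ on a general (here: non-hyperelliptic, Petri) curve; both are recorded in \cite{LN17}. With $g=4$ it reads $h^0/r\le \mu/4+3/4$. I would quote it rather than reprove it. The idea behind it is to take the subsheaf generated by sections and use the dual span construction together with the fact that $C$ carries no $g^1_2$.

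\emph{The range $2\le\mu<3$.} This is the main obstacle, because the Clifford bound $(\mu+1)/2$ is too weak here and the genus four geometry must enter. For $2\le\mu<5/2$ I would show $3h^0\le d+2r$ as follows. Let $E'\subseteq E$ be the subsheaf generated by global sections and set $W=H^0(E)$. Form the dual span sequence $0\to D^\vee\to W\otimes\O_C\to E'\to0$ and estimate $\deg E'$ from below via $h^0$ of its quotients. Every generated rank-one quotient of degree $\le3$ with two sections is one of the trigonal bundles $L_1,L_2$, of slope $3$, and there are no generated line bundles of degree $\le 2$ with two sections. Each "$2$ sections per rank" step therefore costs at least slope $3$, whence $h^0\le r+(d-r)/2$ degenerates to $h^0\le (d+2r)/3$ on this range.

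For $5/2\le\mu<3$ I would instead use the symmetry in Lemma \ref{lemma:bn_prephi}(2), $\varphi(\mu)=\varphi(6-\mu)+\mu-3$, to move to slopes in $(3,7/2]$. There I would apply the $\gamma'_n=1$ estimate together with the refinement that equality in the Clifford bound forces $E$ to be built from $L_1,L_2$, hence to have slope exactly $3$. Interpolating between the value $2$ attained at $\mu=3$ (by $L_1\oplus L_2$-type bundles) and $3/2$ at $\mu=5/2$ should yield $\mu/2+1/4$.

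I expect this last interval to require the most care. If the interpolation argument fails, the fallback is to read the bound directly off the higher-rank Brill--Noether results for genus four in \cite{LN17}.
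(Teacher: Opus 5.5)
Your handling of $\mu=0$, of $\mu=3$ (granting $\gamma'_n=1$), and of $0<\mu<2$ by citation is fine. So is the passage from $\varphi_C$ to $\Phi_C$ by upper semicontinuity of the right-hand side, which is exactly the paper's last step. At $x=0,3$ the limsup also sees slopes outside $[0,3]$, but Lemma \ref{lemma:bn_prephi} gives $\varphi_C=0$ for $\mu<0$ and $\varphi_C(\mu)\le \mu/2+1/4\to 7/4<2$ as $\mu\to 3^+$. A small correction: the bound $h^0\le r+(d-r)/4$ cannot hold at $\mu=2$. Indeed $M_{\omega_C}^\vee$ is semistable of rank $3$, degree $6$, with $h^0\ge 4$, which is why the bound jumps at $x=2$. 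The genuine gap is the whole range $2\le\mu<3$: your sketches there do not prove the stated bounds.

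For $2\le\mu<5/2$, the facts you list about $L_1,L_2$ are true. But the step ``each two-sections-per-rank step costs slope $3$, whence $(d+r)/2$ degenerates to $(d+2r)/3$'' is not an argument, because nothing in it produces the coefficient $1/3$. Note that $(d+2r)/3=r+(d-r)/(g-1)$ is a different linear function from the Clifford-type bound $(d+r)/2$.

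For $5/2\le\mu<3$ the plan fails on several counts.
\begin{itemize}
\item Moving to $(3,7/2]$ by symmetry is circular, since Clifford-type bounds above slope $g-1$ are obtained by passing back to $E^\vee\otimes\omega_C$.
\item On $[5/2,3)$, the statement $\gamma'_n=1$ only says that no semistable $E$ there has $h^0\ge 2r$, i.e.\ $\varphi_C<2$.
\item The required bound $h^0/r\le\mu/2+1/4$ is equivalent to the uniform gap $\gamma(E)=\mu+2-2h^0/r\ge 3/2$. A characterisation of the equality case would only give $\gamma(E)>1$, hence $\gamma(E)\ge 1+1/r$ by integrality, which is not uniform in $r$.
\item There is no interpolation principle. $\varphi_C$ is not convex (it jumps up at $\mu=2$ and at $\mu=3$), and semistable bundles of different slopes cannot be combined into a semistable one.
\item Even numerically, the chord from $(5/2,3/2)$ to $(3,2)$ is $\mu-1$, which strictly exceeds $\mu/2+1/4$ on $(5/2,3)$. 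The target line passes through $(3,7/4)$, not through the isolated value $\varphi_C(3)=2$.
\item The value $3/2$ at $\mu=5/2$ is not established either, since your previous argument covers only a half-open interval.
\end{itemize}

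So the proposal becomes a proof only through your fallback, and that fallback is exactly the paper's proof. The bound for $\varphi_C$ on all of $[0,3]$ is \cite{LN17}*{Theorem 4.8}, and upper semicontinuity transfers it to $\Phi_C$. The slopes in $[2,3)$ are where genuine genus-four higher rank Brill--Noether input is needed; it cannot be recovered from $\gamma'_n=1$ plus interpolation.
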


\begin{proof}
The bound holds for $\varphi_C$ by \cite{LN17}*{Theorem 4.8}. By upper-semicontinuity the same bound holds for $\Phi_C$.
\end{proof}

\begin{cor} \label{cor:genfour_bound}
Let $C$ be a general curve of genus four. We have that $\Phi_C(x) \leq (x-3)^2+1.9$ for all $x \neq 3$.
\end{cor}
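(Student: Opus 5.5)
The plan is a direct piecewise verification. For $x<3$ I would use the bound of Proposition \ref{prop:genfour_bound} together with Lemma \ref{lemma:bn_prephi}(1). For $x>3$ I would reflect to the range $[0,3)$ using the symmetry of Lemma \ref{lemma:bn_prephi}(2) and (3). By the lemma following Lemma \ref{lemma:bn_prephi}, all of these properties hold for $\Phi_C$ as well as for $\varphi_C$.

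\emph{Case $x<3$.} For $x<0$ we have $\Phi_C(x)=0<(x-3)^2+1.9$. At $x=0$ the bound gives $1\le 10.9$.

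On each interval of Proposition \ref{prop:genfour_bound} the bound is linear, say $\ell(x)=ax+c$ with $a\in\{\tfrac14,\tfrac13,\tfrac12\}$. The difference $(x-3)^2+1.9-\ell(x)$ is then convex, with its minimum at $x=3+a/2>3$, so it is decreasing on each interval. It therefore suffices to check the right endpoints, using the value approached as $x\to 3$ on the last interval:
\begin{itemize}
\item at $x=2$ for $\tfrac14x+\tfrac34$: $2.9-1.25>0$;
\item at $x=\tfrac52$ for $\tfrac13x+\tfrac23$: $2.15-1.5>0$;
\item at $x=3$ for $\tfrac12x+\tfrac14$: $1.9-1.75=0.15>0$.
\end{itemize}
This last step is the tightest, and it is exactly where the constant $1.9$ is needed.

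\emph{Case $3<x\le 6$.} Put $y=6-x\in[0,3)$. Then $\Phi_C(x)=\Phi_C(y)+3-y$ and $(x-3)^2=(3-y)^2$. So the claim becomes $\Phi_C(y)\le (3-y)^2+y-1.1$ for $y\in[0,3)$, which I would check against the same piecewise bound.
\begin{itemize}
\item At $y=0$: $1\le 7.9$.
\item On $(0,2)$: the difference $(3-y)^2+\tfrac34y-1.85$ has its minimum at $y=2.625$, so it is decreasing here. At $y=2$ it equals $0.65>0$.
\item On $[2,\tfrac52)$: the difference $(3-y)^2+\tfrac23y-\tfrac53-1.1$ is decreasing. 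At $y=\tfrac52$ it is about $0.15>0$.
\item On $[\tfrac52,3)$: the difference $(3-y)^2+\tfrac12 y-1.35$ has an interior minimum at $y=\tfrac{11}{4}$, with value $0.0875>0$.
\end{itemize}

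\emph{Case $x>6$.} Here $\Phi_C(x)=x-3$ by Lemma \ref{lemma:bn_prephi}(3). With $t=x-3$ we need $t^2-t+1.9>0$, which holds since the discriminant $1-7.6$ is negative.

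The only real obstacle is bookkeeping. One has to transport the bound correctly through the symmetry, including the endpoint conventions at $x=2$, $x=\tfrac52$ and $x=6$. One also has to confirm the narrow margins near $x=3$: the $0.15$ on the left and the $0.0875$ on the reflected side. I would compute these exactly, as fractions, rather than approximately.
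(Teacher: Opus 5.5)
Your proposal is correct and follows the paper's proof: Proposition~\ref{prop:genfour_bound} on $[0,3)$, the reflection $\Phi_C(x)=\Phi_C(6-x)+x-3$ on $(3,6]$, and the explicit formulas for $x<0$ and $x>6$, with your convexity and endpoint computations supplying the details the paper leaves to Figure~\ref{fig:genfour_bound}. One typo: on $[2,\tfrac52)$ the difference should read $(3-y)^2+\tfrac23y-\tfrac23-1.1$, not $-\tfrac53$, and this corrected expression is the one that gives your value $0.15$.
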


\begin{proof}
The bound follows directly by Proposition \ref{prop:genfour_bound} for $0 \leq x <3$. For $3<x\leq 6$ we use that $\Phi_C(x) =\Phi_C(6-x)+x-3$ together with Proposition \ref{prop:genfour_bound}. Finally, for $x<0$ (resp. $x>6$) we use that $\Phi_C(x)=0$ (resp. $\Phi_C(x) =x-3$). We depicted the bound from Proposition \ref{prop:genfour_bound} together with $y=(x-3)^2+1.9$ in Figure \ref{fig:genfour_bound}.
\end{proof}

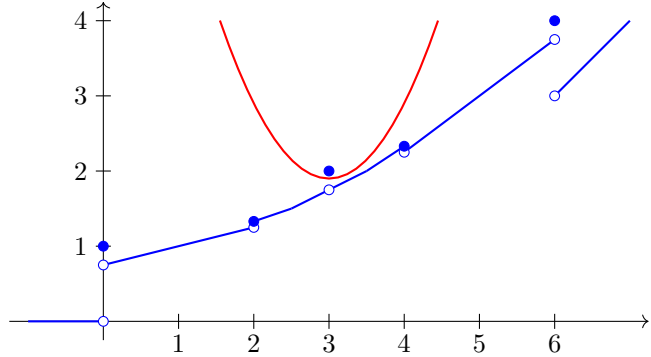
\begin{figure}[htbp]
\centering
\begin{tikzpicture}
\draw[->] (-1.25,0) -- (7.25,0); 
\draw[->] (0,-0.25) -- (0,4.25);

\foreach \y in {1,2,3,4} {\draw (-0.1,\y) -- (0.1,\y); \node at (-0.3,\y) {\y};}
\foreach \x in {1,2,3,4,5,6} {\draw (\x,0.1) -- (\x,-0.1); \node at (\x,-0.3) {\x};}

\draw[thick, blue] (-1,0) -- (0,0);
\draw[thick, blue] (0,0.75) -- (2,1.25);
\draw[thick, blue] (2,1.33) -- (2.5,1.5);
\draw[thick, blue] (2.5,1.5) -- (3.5,2);
\draw[thick, blue] (3.5,2) -- (4,2.33);
\draw[thick, blue] (4,2.25) -- (6,3.75);
\draw[thick, blue] (6,3) -- (7,4);

\draw[fill=white, draw=blue] (0,0) circle [radius=0.065];
\draw[fill=white, draw=blue] (0,0.75) circle [radius=0.065];
\draw[fill=white, draw=blue] (2,1.25) circle [radius=0.065];
\draw[fill=white, draw=blue] (3,1.75) circle [radius=0.065];
\draw[fill=white, draw=blue] (4,2.25) circle [radius=0.065];
\draw[fill=white, draw=blue] (6,3) circle [radius=0.065];
\draw[fill=white, draw=blue] (6,3.75) circle [radius=0.065];

\draw[fill=blue, draw=blue] (0,1) circle [radius=0.065];
\draw[fill=blue, draw=blue] (2,1.33) circle [radius=0.065];
\draw[fill=blue, draw=blue] (3,2) circle [radius=0.065];
\draw[fill=blue, draw=blue] (4,2.33) circle [radius=0.065];
\draw[fill=blue, draw=blue] (6,4) circle [radius=0.065];

\draw[red, thick] plot [variable=\t, domain=1.55:4.45] ({\t, \t*\t-6*\t+10.9});
\end{tikzpicture}
\caption{A bound and a partial bound of $y=\Phi_C(x)$.}
\label{fig:genfour_bound}
\end{figure}

For future reference we will need the following result.

\begin{lemma}[\cite{LN17}*{Lemma 4.2}] \label{lemma:genfour_slope3}
Let $C$ be a general curve of genus four, and let $E$ be a stable vector bundle of rank $r \geq 2$ and $\mu(E)=3$. Then $h^0(C, E)/r \leq 3/2$. 
\end{lemma}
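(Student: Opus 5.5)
The plan is to argue by contradiction: suppose $E$ is stable of rank $r\geq 2$, $\mu(E)=3$, and $h^0(C,E)>3r/2$. The tools are the dual span and elementary-transformation techniques of Paranjape--Ramanan and Mercat, together with the classical Brill--Noether theory of $C$, namely gonality $3$ with $L_1,L_2$ the only $g^1_3$'s.

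\textbf{Step 1 (symmetry).} Since $\deg E=3r=r(g-1)$, Riemann--Roch gives $\chi(E)=0$, so $h^0(E)=h^1(E)=h^0(E^\vee\otimes\omega_C)$. The bundle $E^\vee\otimes\omega_C$ is again stable of slope $3$. So every hypothesis we use applies equally to $E$ and to its Serre dual, and we may pass to whichever is more convenient.

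\textbf{Step 2 (reduce to a generated subsheaf).} Let $F\subseteq E$ be the subsheaf generated by $H^0(C,E)$, of rank $s$, so that $h^0(F)\geq h^0(E)$.
\begin{itemize}
\item If $F\neq E$, stability gives $\deg F<3s$. Also $F$ has no trivial summand, since $\Hom(\O_C,E)\to\Hom(\O_C,\O_C)$ cannot split against a stable $E$ of positive slope. Then $F$ is a generated bundle of slope $<3$ with more than $3s/2$ sections.
\item Such bundles are controlled by the dual span. Write
\[ 0\to M\to H^0(F)\otimes\O_C\to F\to 0, \]
so that $M^\vee$ is generated, has rank $h^0(F)-s$ and degree $\deg F$, and satisfies $h^0(M^\vee)\geq h^0(F)$.
\item If $h^0(F)>3s/2$, then $\operatorname{rk} M^\vee<2s$, so $\mu(M^\vee)>\deg F/(2s)$. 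Iterating between $F$ and $M^\vee$, and using Lemma \ref{lemma:bn_prephi}(4) (higher-rank Clifford) on the semistable pieces, should force a generated bundle of small slope with too many sections.
\item Ultimately this produces a line bundle of degree $\leq 2$ with $h^0\geq 2$, or one of degree $3$ with $h^0\geq 3$. Both are impossible, since $C$ has gonality $3$ and no $g^2_3$.
\end{itemize}

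\textbf{Step 3 (the case $F=E$).} Now $E$ itself is generated. Take $r-1$ general sections to obtain
\[ 0\to\O_C^{\,r-1}\to E\to \det E\to 0, \]
which is only useful for small $r$. More effectively, use the dual span again: $D=M^\vee$ has rank $n-r<r/2$, where $n=h^0(E)$, and degree $3r$, so $\mu(D)>6$. Dualizing and applying Step 1 to $E^\vee\otimes\omega_C$ gives a second inequality, and combining the two should contradict Lemma \ref{lemma:bn_prephi}(2),(3) for the HN factors of $D$. The boundary case of equality $n=3r/2$ should correspond exactly to extensions built from $L_1,L_2$ and $\omega_C\otimes L_i^\vee$. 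This case, where sections of $E$ come from a $g^1_3$, will be isolated using that $L_i$ is BNP extremal.

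\textbf{Main obstacle.} The hard part is the bookkeeping in Step 2: the dual span of a generated bundle need not be semistable. One must therefore run an induction on rank over the HN filtration of $M^\vee$, checking at each stage that the gonality-$3$ bound (roughly $h^0/\text{rk}\leq 1+\mu/3$ for slopes $\leq 3$, as in Proposition \ref{prop:genfour_bound}) is not exceeded. My first attempt would be a simultaneous induction on $r$ proving both the slope-$3$ statement and the bound $h^0\leq r+\deg/3$ for generated bundles of slope $<3$ without trivial summands. The base case $r=1$ is classical, from the line bundle description of $C$ in $\P^3$ and the two rulings of $Q$.
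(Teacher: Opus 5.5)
There is a genuine gap. What you have written is a strategy, not a proof: every decisive step is phrased as "should", and the obstacle you name yourself is never resolved. Beyond that, the strategy fails at identifiable points.

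\emph{Step 2.} You try to reach a contradiction using only four properties of $F$: it is generated, $\mu(F)<3$, it has no trivial summand, and $h^0(F)>3s/2$. Such bundles exist. Take $p\in C$ and $N=\omega_C(-p)$. Since $C$ is not hyperelliptic, $N$ is a base-point-free net of degree $5$. Hence $M_N=\ker(H^0(N)\otimes\O_C\to N)$ has $H^0(M_N)=0$, and $M_N^\vee$ is generated of rank $2$ and degree $5$ with $h^0(M_N^\vee)\geq 3$. Then $F=L_1\oplus M_N^\vee$ is generated, has no trivial summand, has slope $8/3$, and satisfies $h^0(F)\geq 5>9/2$.

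So passing to the generated subsheaf discards the only information that matters: $F$ sits inside a \emph{stable} bundle of slope $3$, which forbids nonzero maps $L_i\to E$. Your proposed auxiliary bound $h^0\le s+\deg/3$ would not rescue this either, since it is weaker than $3s/2$ as soon as the slope exceeds $3/2$.

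\emph{Step 3.} The inequality is reversed. From $n>3r/2$ you get $\rk D=n-r>r/2$, hence $\mu(D)<6$. But Lemma~\ref{lemma:bn_prephi}(3) only concerns slopes $>6$, so it gives you nothing here.

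\emph{Smaller points.} Your justification that $F$ has no trivial summand is wrong: two generically independent sections generate $\O_C^{2}$. This is repairable by splitting off trivial summands, but the stated reason does not work. Also, there is no boundary case to isolate. Equality is allowed by the statement, and a stable $E$ cannot be built as an extension of the $L_i$, since $L_i\subset E$ would contradict stability.

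\emph{The missing idea} is the base-point-free pencil trick, which proves the lemma in a few lines; the paper itself simply quotes \cite{LN17}*{Lemma 4.2}. Tensor the evaluation sequence $0\to L_1^{-1}\to H^0(L_1)\otimes\O_C\to L_1\to 0$ by $E$ and take global sections:
\[ 0\to \Hom(L_1,E)\to H^0(E)^{\oplus 2}\to H^0(E\otimes L_1). \]
Since $\rk E\geq 2$ and $\mu(E)=\deg L_1=3$, stability gives $\Hom(L_1,E)=0$.

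Likewise, $E\otimes L_1$ is stable of slope $6=\deg\omega_C$ and rank at least $2$. A nonzero map $E\otimes L_1\to\omega_C$ between stable bundles of equal slope would be an isomorphism, which is impossible since the ranks differ. Hence $h^1(E\otimes L_1)=\dim\Hom(E\otimes L_1,\omega_C)=0$.

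Therefore $2h^0(E)\leq h^0(E\otimes L_1)=\chi(E\otimes L_1)=6r-3r=3r$. This uses a single base-point-free $g^1_3$, and needs no dual spans, Clifford bounds or induction on rank.
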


\section{Two semi-orthogonal decompositions} \label{sec:sod}

The goal of this section is to describe the Kuznetsov component of a nodal cubic threefold, following the exposition in \cite{AK25}*{App. A}. We fix $Y \subset \P^4$ a nodal cubic threefold, and we keep the setup of Section \ref{sec:genfour}.

We recall that the \emph{Kuznetsov component} of $Y$ is defined by the semi-orthogonal decomposition 
\begin{equation} \label{eq:sod_Y}
\DC^b(Y) = \langle \Ku(Y), \O_Y(-1), \O_Y \rangle.
\end{equation}
The first step towards describing $\Ku(Y)$ is given by producing a semi-orthogonal decomposition on $X$, guided by \eqref{eq:sod_Y}.

\begin{prop}[cf. \cite{AK25}*{(A.2)}]
We have a semi-orthogonal decomposition 
\begin{equation} \label{eq:sod_X}
\DC^b(X) = \langle \widetilde{\Ku}(Y), \O_X(-H), \O_X, \O_Q \rangle.
\end{equation}
The pushforward $R\pi_\ast\colon \DC^b(X) \to \DC^b(Y)$ induces a Verdier quotient $\widetilde{\Ku}(Y) \to \Ku(Y)$, whose kernel is generated by the two objects $\O_Q(-1, 0), \O_Q(0, -1) \in \widetilde{\Ku}(Y)$.
\end{prop}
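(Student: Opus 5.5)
We prove the two claims separately: first that the four components exist and are semi-orthogonal, then that $R\pi_\ast$ identifies $\Ku(Y)$ with a Verdier quotient of $\widetilde{\Ku}(Y)$.

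\textbf{Step 1: the exceptional collection on $X$.} The objects $\O_X(-H)$, $\O_X$, $\O_Q$ must form an exceptional collection in $\DC^b(X)$; then \eqref{eq:sod_X} holds with $\widetilde{\Ku}(Y)$ defined as their left orthogonal. Since $X$ is a smooth projective rational threefold, $\O_X$ is exceptional, and so is its twist $\O_X(-H)$.

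The divisor $Q\cong \P^1\times\P^1$ is contracted to the node $p$. Its normal bundle is $\O_Q(-1,-1)$, because $Q$ is the exceptional divisor of the blow-up of an ordinary double point. The sequence $0\to \O_X(-Q)\to \O_X\to \O_Q\to 0$ and Serre duality reduce $\Ext^\bullet(\O_Q,\O_Q)$ to cohomology of $\O_Q$ and $\O_Q(-1,-1)$, giving $\Ext^\bullet(\O_Q,\O_Q)=\C$.

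For the semi-orthogonality we need:
\begin{itemize}
\item[(a)] $\Ext^\bullet(\O_X,\O_X(-H))=H^\bullet(X,\O_X(-H))=H^\bullet(Y,\O_Y(-1))=0$. This uses $R\pi_\ast\O_X=\O_Y$, valid because $Y$ has rational singularities.
\item[(b)] $\Ext^\bullet(\O_Q,\O_X)$ and $\Ext^\bullet(\O_Q,\O_X(-H))$. By Grothendieck duality these equal $H^\bullet(Q,\O_Q(Q)\otimes\O_X(\ast))[-1]$. Since $H|_Q=0$, this is $H^\bullet(Q,\O(-1,-1))=0$.
\end{itemize}

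\textbf{Step 2: the quotient.} Since $R\pi_\ast\O_X=\O_Y$, the projection formula makes $\pi^\ast$ fully faithful, so $\DC^b(Y)$ sits inside $\DC^b(X)$ via $L\pi^\ast$. We then realize $\DC^b(Y)$ as a Verdier quotient of $\DC^b(X)$. The kernel of $R\pi_\ast$ is the subcategory of objects supported on $Q$ with $R\Gamma(Q,-)=0$. On $\P^1\times\P^1$ this kernel is generated by $\O_Q(-1,0)$ and $\O_Q(0,-1)$. Indeed, $\DC^b(Q)=\langle \O(-1,-1),\O(-1,0),\O(0,-1),\O\rangle$ and $R\pi_\ast$ kills exactly the acyclic pieces; this is the known description of categorical resolutions of nodes (Kuznetsov's result on $\DC^b$ of a nodal variety). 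Both generators lie in $\widetilde{\Ku}(Y)$, because
\[ \Ext^\bullet(\O_X(-H),\O_Q(a,b))=\Ext^\bullet(\O_X,\O_Q(a,b))=H^\bullet(\O_Q(a,b))=0, \]
and the analogous computation with $\O_Q$ in the first slot, via the sequence above, vanishes as well.

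Because $R\pi_\ast\O_X(-H)=\O_Y(-1)$ and $R\pi_\ast\O_Q=\O_p$, the functor $R\pi_\ast$ sends $\widetilde{\Ku}(Y)$ into $\langle\O_Y(-1),\O_Y\rangle^\perp=\Ku(Y)$. This inclusion has to be checked from the adjunction $L\pi^\ast\dashv R\pi_\ast$ applied to perfect objects, which is legitimate because $\O_Y$ and $\O_Y(-1)$ are perfect.

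\textbf{Main obstacle.} Essential surjectivity and the identification of the kernel together show that the induced functor $\widetilde{\Ku}(Y)/\langle \O_Q(-1,0),\O_Q(0,-1)\rangle\to\Ku(Y)$ is an equivalence, and this is where the real difficulty lies. I would take the Verdier quotient $\DC^b(X)/\ker R\pi_\ast\simeq\DC^b(Y)$ from Kuznetsov's theory of nodal singularities, which is the cited input from \cite{AK25}*{App. A}.

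It remains to show that the semi-orthogonal decomposition is compatible with the quotient. Since $\O_X(-H)$ and $\O_X$ are pulled back from $Y$, the decomposition \eqref{eq:sod_X} descends. The component $\O_Q$ maps to $\O_p$, which the quotient absorbs into the other pieces. I would run this check via the mutation of $\O_Q$ past $\widetilde{\Ku}(Y)$, and I expect it to be the most delicate bookkeeping in the argument.
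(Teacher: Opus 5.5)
Your Step 1 is correct and is the ``direct computation'' the paper refers to. Your overall plan is also the paper's plan: quote a localization theorem for $R\pi_\ast$, then check compatibility with the decomposition of $\DC^b(X)$.

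The gap is in Step 2, where you identify $\ker R\pi_\ast$. It is \emph{not} generated by $\O_Q(-1,0)$ and $\O_Q(0,-1)$. By your own criterion, three of the four objects in $\langle\O(-1,-1),\O(-1,0),\O(0,-1),\O\rangle$ are acyclic, so $R\pi_\ast\O_Q(-1,-1)=0$ as well. The result the paper quotes from \cite{KS23} is that $R\pi_\ast$ is a Verdier localization whose kernel is generated by the \emph{three} objects $\O_Q(-1,-1),\O_Q(-1,0),\O_Q(0,-1)$. The third generator is genuinely extra. Since $\omega_X|_Q\cong\O_X(Q)|_Q\cong\O_Q(-1,-1)$ (discrepancy one at the node), Serre duality gives $\Ext^3(\O_Q,\O_Q(-1,-1))\cong\Hom(\O_Q(-1,-1),\O_Q(-1,-1))^\vee=\C$. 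Hence $\O_Q(-1,-1)\notin\widetilde{\Ku}(Y)$, whereas $\langle\O_Q(-1,0),\O_Q(0,-1)\rangle\subset\widetilde{\Ku}(Y)$ by your own computation.

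This is not cosmetic: the missing generator is exactly what makes your ``absorption'' of $\O_Q$ work, and with your kernel the last step fails. The quotient of $\DC^b(X)$ by $\langle\O_Q(-1,0),\O_Q(0,-1)\rangle$ still has a fourth semi-orthogonal component generated by the image of $\O_Q$. Moreover $R\pi_\ast\O_Q=\O_p$ is not in $\Ku(Y)$, so nothing in your argument explains why the image of $\widetilde{\Ku}(Y)$ exhausts $\Ku(Y)$.

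The correct bookkeeping is short. Move $\O_Q$ to the far left with the Serre functor $S_X=(-)\otimes\omega_X[3]$, which gives
\[ \DC^b(X)=\langle \O_Q(-1,-1),\ \widetilde{\Ku}(Y),\ \O_X(-H),\ \O_X\rangle . \]
The kernel $\langle\O_Q(-1,-1),\O_Q(-1,0),\O_Q(0,-1)\rangle$ is compatible with this decomposition:
\begin{enumerate}
\item its first generator spans the first component;
\item the other two generators lie in $\widetilde{\Ku}(Y)$;
\item it meets $\langle\O_X(-H),\O_X\rangle$ trivially, since $R\pi_\ast$ is fully faithful there.
\end{enumerate}
Passing to the Verdier quotient yields $\DC^b(Y)=\langle \widetilde{\Ku}(Y)/\langle\O_Q(-1,0),\O_Q(0,-1)\rangle,\O_Y(-1),\O_Y\rangle$, which is the statement.

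A smaller inaccuracy: the projection formula makes $L\pi^\ast$ fully faithful only on $\DC^{\mathrm{perf}}(Y)$. In fact $L\pi^\ast\O_p$ is unbounded; otherwise adjunction would force $\Ext^\bullet_Y(\O_p,\O_p)$ to be finite-dimensional, which fails at a singular point. So $\DC^b(Y)$ does not sit inside $\DC^b(X)$ via $L\pi^\ast$.
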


\begin{proof}
The first part is a direct computation. Note from \cite{KS23}*{Corollary 5.6 and Lemma 5.7} that $R\pi_\ast\colon \DC^b(X) \to \DC^b(Y)$ is a Verdier quotient, and the kernel is generated by the three objects $\O_Q(-1, -1), \O_Q(-1, 0), \O_Q(0, -1)$. The result follows immediately by the compatibility of the semi-orthogonal decompositions on $\DC^b(X)$ and $\DC^b(Y)$.
\end{proof}

\begin{remark} \label{remark:sod_crepant}
Note that each of the two objects $\O_Q(-1, 0), \O_Q(0, -1)$ are exceptional. This way, it is possible to refine \eqref{eq:sod_X} by writing
\[ \widetilde{\Ku}(Y) = \langle \O_Q(-1, 0), \D_1 \rangle = \langle \O_Q(0, -1), \D_2 \rangle. \]
In this case, the main result of \cite{AK25}*{App. A} ensures that each map $\D_i \to \Ku(Y)$ is a \emph{crepant} categorical resolution of singularities. We will go back to this point in Section \ref{sec:questions}
\end{remark}

A second semi-orthogonal decomposition on $X$ can be obtained using Orlov's semi-orthogonal decomposition for the blow-up of $C \subset \P^3$, namely
\[ \DC^b(X) = \langle i_\ast \beta^\ast \DC^b(C) \otimes \O_X(B), \O_X, \O_X(F), \O_X(2F), \O_X(3F) \rangle. \]
After some mutations, we get the following result.

\begin{teo}[\cite{AK25}*{Theorem A.1}]
Let $L_1, L_2$ be the two line bundles on $C$ induced by the two rulings of $Q \subset \P^3$. Then, we have an equivalence $\widetilde{\Ku}(Y) \cong \DC^b(\T_C)$, in such a way that the two objects $\O_Q(-1, 0), \O_Q(0, -1)$ are mapped to $\afrak(L_1), \afrak(L_2)$.
\end{teo}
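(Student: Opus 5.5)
The plan is to compose an explicit chain of semi-orthogonal decompositions and mutations in $\DC^b(X)$. Start from Orlov's decomposition
\[ \DC^b(X) = \langle i_\ast \beta^\ast \DC^b(C) \otimes \O_X(B), \O_X, \O_X(F), \O_X(2F), \O_X(3F) \rangle \]
and transform it into one of the form $\langle \mathcal{C}, \O_X(-H), \O_X, \O_Q \rangle$. Comparing with \eqref{eq:sod_X}, this identifies $\mathcal{C}$ with $\widetilde{\Ku}(Y)$. Separately, show that $\mathcal{C}$ is equivalent to $\DC^b(\T_C)$ by exhibiting $\mathcal{C}$ as glued from $\DC^b(C)$ and $\DC^b(\Vect_\C)$ with the gluing bimodule $R\Hom(\O_C\otimes -, -)$ of Lemma \ref{lemma:aug_functors}(3).

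\textbf{Step 1: rewrite the exceptional line bundles.} Use the relations $H=3F-B$ and $Q=2H-B=3F-B+(3F-B)-B$ in $\Pic(X)$. Rewrite the exceptional collection $\O_X, \O_X(F), \O_X(2F), \O_X(3F)$ so that $\O_X(-H)$, $\O_X$ and $\O_Q$ appear at the right end:
\begin{enumerate}
\item Twist the whole decomposition by $\O_X(-3F)$, so the line bundles become $\O_X(-3F),\dots,\O_X$.
\item Mutate $\O_X(-3F)$ to the far right via Serre duality; since $K_X=-4F+B$, it becomes $\O_X(F-B)$.
\item Use the exact sequences $0\to \O_X(-Q)\to\O_X\to\O_Q\to 0$ and $0\to\O_X(-B)\to\O_X\to\O_B\to 0$. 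Left-mutating $\O_X$ through appropriate pieces should produce $\O_Q$ as the rightmost object, with $\O_X(-H)=\O_X(B-3F)$ and $\O_X$ just before it.
\end{enumerate}
What remains on the left is the image of $\DC^b(C)$, together with one leftover exceptional line bundle, say $\O_X(-F)$ or $\O_X(-2F)$ after twisting. This leftover object is what will play the role of $i_\ast\DC^b(\Vect_\C)$.

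\textbf{Step 2: identify the gluing with $\DC^b(\T_C)$.} Compute $R\Hom$ from the leftover line bundle $\mathcal{L}$ into the image $\Psi(\DC^b(C))$ of the embedded copy of $\DC^b(C)$, where $\Psi$ denotes the embedding functor (possibly after mutation). The aim is to show
\[ R\Hom(\mathcal{L}, \Psi(F)) \cong R\Hom_C(\O_C, F) \]
up to a shift and a twist by a line bundle on $C$. Composing with the autoequivalence $F\mapsto F\otimes M^{-1}$ of $\DC^b(C)$, and using the adjunction $\beta^\ast \dashv \beta_\ast$ together with $R\beta_\ast\O_B=\O_C$, this yields exactly the gluing functor of Lemma \ref{lemma:aug_functors}(3). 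Since both categories are glued along isomorphic bimodules, they are equivalent; here one can use that a semi-orthogonal decomposition is determined by its gluing bimodule, for instance via dg-enhancements.

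\textbf{Step 3: track $\O_Q(-1,0)$ and $\O_Q(0,-1)$.} Compute the images of these two objects under the equivalence through their projections. Since $Q\cap B$ is the curve $C$ and the rulings restrict to $L_1,L_2$, the $\DC^b(C)$-component should be $L_i$ (up to the normalizing twist). Its $\Vect$-component is $R\Hom$ against $\mathcal{L}$, which should give $H^0(C,L_i)\cong\C^2$ with the evaluation map as gluing datum. This gives $\afrak(L_i)$, consistent with both objects being exceptional.

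\textbf{Main obstacle.} The hardest part is Step 1: choosing the correct sequence of mutations and verifying the needed orthogonality and cohomology vanishings on $X$. The line bundle cohomology must be computed via $\alpha_\ast$ on $\P^3$, with corrections supported on $B$. I would first try to guess the final decomposition from the numerics: the numerical $K$-theory of $\widetilde{\Ku}(Y)$ has rank $3$, matching $\DC^b(\T_C)$. Then I would verify each mutation by checking Hom-vanishings of the form $H^\bullet(X,\O_X(aF-bB))$ with $b\in\{0,1\}$.
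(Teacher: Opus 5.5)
Your strategy (Orlov's decomposition, mutate until $\O_X(-H),\O_X,\O_Q$ sit on the right, then read off the gluing) is the paper's. The paper only says ``after some mutations'' and defers to Alexeev--Kuznetsov. The plan can be completed, but Step 1 as you set it up would not close.

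\textbf{The divisor class of $Q$ is wrong.} The relation $Q=2H-B=6F-3B$ is false; the paper's ``$Q=2H-B$'' is a misprint. The quadric is smooth along $C$, so its strict transform has class $Q=2F-B$. Equivalently $F=H-Q$, from projecting away from the node, and $2H-B$ is $3Q$. With your class, $\O_X(-Q)=\O_X(3B-6F)$ is not one of the line bundles your mutations produce. So the sequence $0\to\O_X(-Q)\to\O_X\to\O_Q\to 0$ cannot be used to manufacture $\O_Q$.

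\textbf{The Serre move is applied to the wrong piece.} In the twisted Orlov decomposition the leftmost piece is the curve block, not $\O_X(-3F)$. You must first move the curve block to the right end with $S_X^{-1}=-\otimes\O_X(4F-B)[-3]$, where it becomes $i_\ast\beta^\ast\DC^b(C)$. Only then can you send $\O_X(-3F)$ to $\O_X(F-B)$.

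\textbf{With these fixes the plan closes.} You get
\[ \DC^b(X)=\langle \O_X(-2F),\ \O_X(-F),\ \O_X,\ i_\ast\beta^\ast\DC^b(C),\ \O_X(F-B)\rangle . \]
Twisting by $\O_X(B-F)$ turns $\O_X(-2F),\O_X(-F),\O_X(F-B)$ into $\O_X(-H),\O_X(-Q),\O_X$. Next, left-mutate the middle block $\langle \O_X(B-F),\ i_\ast\beta^\ast\DC^b(C)\otimes\O_X(B-F)\rangle$ through $\langle\O_X(-H),\O_X(-Q)\rangle$. Then replace $\langle\O_X(-Q),\O_X\rangle$ by $\langle\O_X,\O_Q\rangle$. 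This works because $R\Hom(\O_X(-Q),\O_X)=H^\bullet(\P^3,I_C(2))=\C$ for a $(2,3)$ complete intersection, so the right mutation of $\O_X(-Q)$ through $\O_X$ is $\O_Q[-1]$.

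The result is $\widetilde{\Ku}(Y)\simeq\langle \O_X,\ i_\ast\beta^\ast\DC^b(C)\rangle$, glued by $R\Hom(\O_X,i_\ast\beta^\ast G)=R\Gamma(C,G)$. This is exactly Lemma~\ref{lemma:aug_functors}(3) with no twist by a line bundle; send $(\C,0;0)\mapsto \O_X[1]$ to match the shift. So your leftover exceptional object is $\O_X$, not $\O_X(-F)$ or $\O_X(-2F)$.

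\textbf{Step 3 needs the inverse mutation.} You must push $\O_Q(-1,0)$ through the inverse mutation before projecting. Naively, $R\Hom(\O_Q(-1,0),\O_X(B-F))=H^0(\O_Q(2,1))[-1]=\C^6[-1]$, which is the wrong answer. Since $R\Hom(\O_Q(-1,0),\O_X(-Q))=H^0(\O_Q(1,0))[-1]$, the right mutation through $\langle\O_X(-H),\O_X(-Q)\rangle$ is the universal extension $\mathcal{E}$ of $\O_Q(-1,0)$ by $\O_X(-Q)^{\oplus 2}$. Then $R\Hom(\mathcal{E},\O_X(B-F))$ is concentrated in degree $0$. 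It equals the $2$-dimensional kernel of the surjective multiplication map $H^0(\O_{\P^3}(1))\otimes H^0(\O_Q(1,0))\to H^0(\O_Q(2,1))$, matching the two sections of $L_1$ needed for $\afrak(L_1)$.
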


\section{Proof of Theorem \ref*{teo:intro_stab}} \label{sec:pfmain}

In this section we will prove Theorem \ref{teo:intro_stab}. We will divide our exposition in two steps. In Step 1 we will construct the weak stability condition $\sigma_{3, 2}$ in $\DC^b(\T_C)$. Then, in Step 2 we will descend this to a stability condition $\overline{\sigma}_{3, 2}$ in $\Ku(Y)$.

\subsubsection*{Step 1: Constructing $\sigma_{3, 2}$}

Let us start by constructing $\sigma_{3, 2}$ as a weak pre-stability condition on $\DC^b(\T_C)$. By Theorem \ref{teo:stab_FN}, together with the explicit bound of Proposition \ref{prop:genfour_bound}, we have that $\sigma_{3, w} = (\A(3), Z_{3, w})$ defines a stability condition on $\DC^b(\T_C)$ for any $w>3$. We point out that the stable objects of phase 1 are $(\C, 0; 0)[1]$ and $(H^0(C, E), E; \ev_E)$ for any stable vector bundle $E \in \Coh(C)$ of slope $3$. 

This way, we set $\sigma_{3, 2} = (\A(3), Z_{3, 2})$. By continuity, we have that any $T \in \A(3)$ has $\Im Z_{3, 2}(T) \geq 0$, and if equality holds then $\Re Z_{3, 2}(T) \leq 0$.

\begin{claim}
Any object in $\A(3)$ admits a Harder--Narasimhan filtration by $\sigma_{3, 2}$-semistable objects. This follows by the same argument of \cite{FN25}*{pp. 13--4}; see \cite{Cho24}*{Proof of Proposition 6.7} for a discussion on why the argument still works for weak central charges. 
\end{claim}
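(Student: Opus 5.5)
The claim asserts that every object of $\A(3)$ has a Harder--Narasimhan filtration with respect to the weak central charge $Z_{3,2}$. I would use the standard criterion for existence of HN filtrations on the heart of a (weak) pre-stability condition, as in Bridgeland's Proposition 2.4 and its weak variants used in \cite{Cho24}. It requires two things. First, $\A(3)$ has no infinite chains of subobjects $\cdots \subset T_{j+1} \subset T_j \subset \cdots \subset T_0$ with $\phi(T_{j+1}) > \phi(T_j/T_{j+1})$. Second, it has no infinite chains of quotients $T_0 \twoheadrightarrow T_1 \twoheadrightarrow \cdots$ with $\phi(K_j) > \phi(T_{j+1})$, where $K_j$ is the kernel. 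A convenient sufficient condition is that $\A(3)$ is noetherian and that the image of $\Im Z_{3,2}$ on $K^{\num}$ is discrete. The second holds trivially: $\Im Z_{3,2}(T) = \dbf(T) - 3\rbf(T) \in \Z$.

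\textbf{Step 1: noetherianity.} I would first show that $\A(3)$ is noetherian. This is the argument of \cite{FN25}*{pp. 13--4}, and it depends only on the heart and on $\Im Z$, not on $\Re Z$. The heart $\A(3)$ is the tilt of the noetherian category $\T_C$ at a rational slope. The usual tilting argument (as for Bridgeland's curve/surface hearts) works as follows. Given an ascending chain in $\A(3)$, the integer $\Im Z$ is non-negative and additive, so it stabilizes. After that, the subquotients all have $\Im Z = 0$, meaning they lie in the phase-one part. That part is generated by $\Ftors^3[1]$ objects of slope exactly $3$ and by $(\C,0;0)[1]$. One then concludes using noetherianity of $\T_C$ applied to $H^0$ and $H^{-1}$ of the objects.

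\textbf{Step 2: finiteness of HN chains.} Weak central charges may send nonzero objects to $0$; here $Z_{3,2}((H^0(E),E;\ev_E)) = h^0(E) - 2\rk(E)$ can vanish or be positive. The convention is to assign phase $1$ whenever $\Im Z = 0$, and the positivity statement proven above guarantees $\Re Z_{3,2} \le 0$ on such objects. With noetherianity and discreteness of $\Im Z$, I would run the proof of \cite{Cho24}*{Proposition 6.7} as follows.
\begin{itemize}
\item A descending chain of quotients with decreasing phases has $\Im Z$ eventually constant.
\item Beyond that point, the kernels have $\Im Z = 0$ and hence phase $1$, which is maximal. This contradicts the phase inequality unless the chain stabilizes.
\item Ascending chains of subobjects stabilize by noetherianity.
\end{itemize}
Bridgeland's construction of maximal destabilizing quotients then goes through unchanged, since it uses only the see-saw property. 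The see-saw property holds for weak central charges because $Z$ of each object lies in the closed upper half plane, nonzero objects with $Z = 0$ are assigned phase $1$, and phases are compared as usual.

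\textbf{Main obstacle.} The delicate point is the phase-one objects $T$ with $Z_{3,2}(T) = 0$, i.e.\ $(H^0(E),E;\ev)$ with $h^0(E) = 2\rk E$, such as $\afrak(L_i)$. I expect Step 2's bookkeeping to survive them, because the argument only uses the ordering in which phase $1$ is top. I would therefore check carefully that the see-saw inequalities remain valid, in the weakened non-strict sense, when one term has $Z = 0$.
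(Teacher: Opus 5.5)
Your overall route is the argument the paper defers to via \cite{FN25} and \cite{Cho24}: noetherianity of $\A(3)$, plus discreteness of $\Im Z_{3,2}=\dbf-3\rbf$, fed into the weak version of Bridgeland's HN criterion. But Step 2 as written assigns the two hypotheses to the wrong chain conditions, so your first (subobject) condition is never actually proved. For a chain of quotients $T_j\twoheadrightarrow T_{j+1}$ the forbidden inequality is $\phi(K_j)>\phi(T_{j+1})$. A kernel of phase $1$ \emph{satisfies} this inequality rather than contradicting it, because subtracting $Z(K_j)\in\R_{\le 0}$ only moves $Z(T_{j+1})$ to the right. Your third bullet is what actually handles quotients: the kernels of $T_0\twoheadrightarrow T_j$ form an ascending chain in $T_0$, so $K_j=0$ for $j\gg 0$. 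The discreteness argument belongs to descending chains $T_{j+1}\subsetneq T_j$ with $\phi(T_{j+1})>\phi(T_j/T_{j+1})$. There $\Im Z(T_j)$ is a non-increasing sequence of non-negative integers, so eventually $\Im Z(T_j/T_{j+1})=0$. This forces $\phi(T_j/T_{j+1})=1$ and makes the inequality impossible. With the roles swapped, Step 2 is fine, and your weak see-saw remarks are correct.

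In Step 1, the phrase ``noetherianity of $\T_C$ applied to $H^0$ and $H^{-1}$'' hides the only nontrivial point: after $\Im Z$ stabilizes, the maps on $H^{-1}$ are not monotone in $\T_C$. Work with quotients $Q_i\twoheadrightarrow Q_{i+1}$ with kernels $L_i$, and suppose $\Im Z(Q_i)$ and $H^0(Q_i)$ have stabilized. Then $H^0(L_i)=(V_i,0;0)$, $H^{-1}(L_i)$ is $\mu$-semistable of slope $3$, and there is an exact sequence $0\to H^{-1}(L_i)\to H^{-1}(Q_i)\to H^{-1}(Q_{i+1})\to (V_i,0;0)\to 0$. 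Applying $j^!$ gives surjections of vector bundles, so ranks stabilize and $H^{-1}(L_i)=0$. After that only the vector-space part grows. It is bounded by $h^0$ of the now fixed bundle, because every $(U,F;\phi)\in\Ftors^3$ has $U\hookrightarrow H^0(F)$. Hence $L_i=0$ eventually.

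There is also a shift slip, which also appears in the paper's Step 1. The object $(\C,0;0)$ has slope $+\infty$ and $Z_{3,2}=-1$, so it lies in $\A(3)$ unshifted. Conversely, the objects $(H^0(C,E),E;\ev_E)$ of slope $3$ enter $\A(3)$ with a shift, e.g.\ $\afrak(L_i)[1]$.
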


Our strategy for the support property is based on reducing it to the support property of $\sigma_{3, w}$ for $w>2$. The first step is the following observation.

\begin{lemma} \label{lemma:pfmain_presupp}
Let $T \in \A(3)$ be a $\sigma_{3, 2}$-stable object with $\Im Z_{3, 2}(T)>0$. Then there exists some $\delta_0$ such that $E$ is $\sigma_{3, 2+\delta}$-stable for every $0<\delta<\delta_0$.
\end{lemma}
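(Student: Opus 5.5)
The idea is to use that all the $\sigma_{3,w}$ share the heart $\A(3)$, with $\Im Z_{3,w}=\dbf-3\rbf$ independent of $w$; only $\Re Z_{3,w}=-\nbf+w\rbf$ moves, linearly in $w$. (We read the ``$E$'' in the statement as $T$.) Stability can then only fail for $w$ slightly larger than $2$ if $T$ is destabilized by a subobject, and we rule this out with a finiteness statement for walls near $w=2$.

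\textbf{Step 1: potential destabilizers.} Suppose, for contradiction, that there are $\delta_k\to 0^+$ and subobjects $A_k\hookrightarrow T$ in $\A(3)$ that destabilize $T$ for $\sigma_{3,2+\delta_k}$. Consider first a subobject $A$ with $\Im Z(A)=0$. Then $A$ lies in the phase-one part of $\A(3)$, so $\Re Z_{3,w}(A)\le 0$ for all $w\ge 2$. If $\Re Z_{3,2}(A)<0$, then $A$ destabilizes $T$ already at $w=2$, which is impossible. So the only danger from such $A$ is $Z_{3,2}(A)=0$. Subobjects with $\Im Z(A)=\Im Z(T)$ are handled in the same way, using the quotient $T/A$ in place of $A$. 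Hence we may assume $0<\Im Z(A_k)<\Im Z(T)$. Since $\Im Z$ takes values in $\Z$ on $K^{\num}$, only finitely many values of $\Im Z(A_k)$ occur.

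\textbf{Step 2: finitely many classes via the support property.} Choose $s,t$ as in Lemma~\ref{lemma:stab_quad}, with $b=3$ and some $w_0=2+\delta'$. Corollary~\ref{cor:genfour_bound} supplies the required quadratic inequality near $x=3$. For every $w\ge w_0$, Lemma~\ref{lemma:stab_quad}(2) gives $Q\ge 0$ on semistable objects with positive imaginary part. The standard argument (a Bogomolov-type bound together with a bounded imaginary part) then shows that the classes $\vbf(A)$ of $\sigma_{3,w}$-semistable factors in the Harder--Narasimhan filtration of $T$, for $w\in(w_0,w_0+\epsilon)$, lie in a finite set. Consequently the walls for the class $\vbf(T)$ are locally finite on $(2,3)$, and $T$ is $\sigma_{3,w}$-stable on some interval $(2,2+\delta_0)$ unless a sequence of walls accumulates at $w=2$.

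\textbf{Step 3: no accumulation at $w=2$ (main obstacle).} Because $w_0$ cannot be taken equal to $2$ ($\Phi_C(3)=2$), one needs a uniform bound as $w\to 2^+$. Here I would use Lemma~\ref{lemma:genfour_slope3}: apart from $(\C,0;0)[1]$ and $(H^0(C,E),E;\ev_E)$ with $E$ stable of slope $3$, it gives $\nbf/\rbf\le 3/2<2$ on the phase-one objects. This lets one choose $s,t$ so that $w_0-t$ is compared with $3/2$ instead of with $\Phi_C(3)=2$. The resulting quadratic form then works uniformly for all $w>2$ and bounds $\vbf(A_k)$ independently of $k$. With only finitely many classes, the wall conditions $\arg Z_{3,2+\delta}(A_k)\ge\arg Z_{3,2+\delta}(T)$, taken as $\delta\to 0$, give by continuity $\arg Z_{3,2}(A)\ge\arg Z_{3,2}(T)$ for some fixed subobject class $A$. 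This contradicts the $\sigma_{3,2}$-stability of $T$. The case where equality of phases holds in the limit is excluded because $T$ is stable, not merely semistable.
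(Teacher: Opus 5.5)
There is a genuine gap in your Step 3, which carries the whole argument.

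\textbf{The mechanism you propose is wrong.} Lemma~\ref{lemma:genfour_slope3} only covers stable bundles of rank $\geq 2$. The phase-one objects $\afrak(L_i)[1]$, of class $-(1,3,2)$, have $\nbf/\rbf=2$; indeed $Z_{3,2}(\afrak(L_i)[1])=0$. So no choice of $s,t$ gets ``compared with $3/2$''.

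The detour is also unnecessary. Lemma~\ref{lemma:stab_quad}(2) only concerns objects with $\Im Z>0$, and its hypothesis involves $\Phi_C(x)$ only for $x\neq 3$. Moreover $Q$ depends on $w_0,t$ only through $w_0-t$. So Corollary~\ref{cor:genfour_bound} already gives a single form $Q=s(d-3r)^2+1.9r^2-nr$ valid for every $w>2$.

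\textbf{Such a $Q$ does not bound $\vbf(A_k)$ independently of $k$.} The same objection applies to ``Bogomolov plus bounded imaginary part'' in Step 2. Consider classes with $\rbf=-R$, $\dbf-3\rbf=y\in[1,\Im Z(T))$ and $\nbf=-\tfrac{19}{10}R$, where $R\in 10\Z_{>0}$. They satisfy $Q=sy^2\geq 0$, and their slope is $(\nbf-w\rbf)/(\dbf-3\rbf)=(w-1.9)R/y$. Hence they pass the destabilizing test at every $w>2$ once $R$ is large.

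To exclude them you need more input. You would have to take $A_k$ maximal destabilizing, so that $Q(A_k)\geq 0$ applies at all. You would then also use that $A_k$ is a subobject of $T$ at $w=2$: its $\sigma_{3,2}$-slope $(\nbf+2R)/y\geq(0.1R-sy^2/R)/y$ must stay below that of $T$, and this bounds $R$. None of this appears in your proposal.

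\textbf{Minor point.} In Step 1 the case $Z_{3,2}(A)=0$ is left open. It is excluded because such an $A$ has slope $+\infty$ for the weak stability condition $\sigma_{3,2}$, contradicting stability of $T$.

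\textbf{The missing idea.} What makes any support property or finiteness of classes unnecessary is a uniform lower bound on ranks of subobjects. If $A\subset T$ in $\A(3)$, then $H^{-1}(A)\hookrightarrow H^{-1}(T)$, so
\[
\rbf(A)\geq -\rbf(H^{-1}(A))\geq -\rbf(H^{-1}(T)).
\]
Write $\nu_{3,w}=(\nbf-w\rbf)/(\dbf-3\rbf)$. Since $\Im Z(A)\geq 1$, this gives
\[
\nu_{3,2+\delta}(A)=\nu_{3,2}(A)-\delta\,\rbf(A)/\Im Z(A)\leq \nu_{3,2}(A)+\delta\,\rbf(H^{-1}(T)).
\]

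Now combine this with your own Step 1 observation that $\Im Z(A)$ is an integer in $[1,\Im Z(T)]$. The values $\nu_{3,2}(A)<\nu_{3,2}(T)$ are then rationals with bounded denominators, so they lie below $\nu_{3,2}(T)-2\epsilon$ for a uniform $\epsilon>0$. For all small $\delta$ this yields $\nu_{3,2+\delta}(A)\leq\nu_{3,2}(T)-\epsilon<\nu_{3,2+\delta}(T)$. This is exactly the paper's proof.
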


\begin{proof}
Note that $Z_{3, 2}$ has discrete image. This way, given $T \in \A(3)$ $\sigma_{3, 2}$-stable with positive slope, we have that there is an $\epsilon>0$ such that
\[ \frac{\nbf(T')-2\rbf(T')}{\dbf(T')-3\rbf(T')} \leq \frac{\nbf(T)-2\rbf(T)}{\dbf(T)-3\rbf(T)}-2\epsilon \]
for every $0 \neq T' \subsetneq T$. On the other hand, it is clear that for every $T' \subsetneq T$, we have $\rbf(T') \geq - \rbf(H^{-1}(T')) \geq -\rbf(H^{-1}(T))$. This way, for every $\delta<\epsilon/(\rbf(H^{-1}(T))+1)$ we have that
\[ \frac{\nbf(T')-(2+\delta)\rbf(T')}{\dbf(T')-3\rbf(T')} \leq \frac{\nbf(T)-2\rbf(T)}{\dbf(T)-3\rbf(T)}-\epsilon. \]
By continuity, this is smaller than the $Z_{3, 2+\delta}$-slope of $T$ for $0<\delta \ll 1$. 
\end{proof}

\begin{cor} \label{cor:pfmain_supp}
The weak pre-stability condition $\sigma_{3, 2}$ satisfies the support property with respect to $Q(r, d, n)=(d-3r)^2+r^2\cdot 1.9-nr$.
\end{cor}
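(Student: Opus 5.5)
The plan is to check the two conditions that make up the support property for a weak pre-stability condition. First, $Q$ is negative definite on $\ker Z_{3,2}$. Second, $Q(T)\geq 0$ for every $\sigma_{3,2}$-semistable $T\in\A(3)$ with $Z_{3,2}(T)\neq 0$. In both, the stable case is reduced to the known stability conditions $\sigma_{3,w}$ with $w>2$ via Lemma \ref{lemma:pfmain_presupp}.

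\emph{Kernel.} A class $(r,d,n)$ lies in $\ker Z_{3,2}$ exactly when $d=3r$ and $n=2r$. On such classes
\[ Q = 1.9r^2-2r^2=-0.1\,r^2, \]
which is negative unless $r=0$, and $r=0$ forces the zero vector. So $Q$ is negative definite on the kernel, as in Lemma \ref{lemma:stab_quad}(1).

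\emph{Stable objects with $\Im Z_{3,2}(T)>0$.} By Lemma \ref{lemma:pfmain_presupp}, such a $T$ is $\sigma_{3,2+\delta}$-stable for all small $\delta>0$; fix a rational such $\delta$. Apply Lemma \ref{lemma:stab_quad}(2) with $b=3$, $w_0=2+\delta>2=\Phi_C(3)$, $s=1$ and $t=0.1+\delta+\eta$ for a small $\eta>0$. Its hypothesis $(x-3)^2+1.9-\eta>\Phi_C(x)$ for $x\neq 3$ follows from Corollary \ref{cor:genfour_bound}.

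The main subtlety is that the corollary only gives a non-strict inequality, which is why I introduce $\eta$. The conclusion is
\[ Q_\eta(T):=(d-3r)^2+r^2(1.9-\eta)-nr\geq 0. \]
Since $Q=Q_\eta+\eta r^2\geq Q_\eta$, this gives $Q(T)\geq 0$; alternatively one can let $\eta\to 0$.

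\emph{Stable objects of phase $1$ with nonzero central charge.} As noted at the start of Step 1, these are limits of the $\sigma_{3,w}$-phase-$1$ stable objects, namely $(\C,0;0)[1]$ and the shifts $(H^0(C,E),E;\ev_E)[1]$ with $E$ stable of slope $3$. For $(\C,0;0)[1]$ we have $\vbf=(0,0,-1)$, so $Q=0$.

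For $(H^0(C,E),E;\ev_E)[1]$ with $E$ of rank $r\geq 2$, we get $Q=1.9r^2-h^0(E)\,r\geq r^2(1.9-1.5)>0$ by Lemma \ref{lemma:genfour_slope3}. For line bundles of degree $3$, Clifford and the genus-four geometry give $h^0\leq 2$. If $h^0\leq 1$, then $Q\geq 0.9>0$. If $h^0=2$, the bundle is $L_1$ or $L_2$; the object is then $\afrak(L_i)[1]$ and lies in $\ker Z_{3,2}$, so it is excluded from the requirement. These massless objects are precisely the ones killed in Step 2.

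\emph{Semistable objects and the main obstacle.} Reduce to the stable case through Jordan--Hölder factors, which have the same phase and nonzero charge after discarding massless factors. Use the standard fact (Bayer--Macrì--Stellari) that, because $Q$ is negative definite on $\ker Z_{3,2}$, the set of classes with $Q\geq 0$ lying over a fixed ray of $Z_{3,2}$ is a convex cone. Hence $Q$ stays non-negative on sums of such classes.

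I expect the main obstacle to be this last step in the weak setting. A semistable object may contain factors in $\ker Z_{3,2}$, where $Q<0$, and one must check that adding multiples of $\vbf(\afrak(L_i)[1])$ does not destroy non-negativity. My first attempt is to use negative definiteness on the kernel together with the cone argument applied after projecting along $\ker Z_{3,2}$. If that fails, I would restrict the support property, as is customary for weak stability conditions, to objects whose Jordan--Hölder factors have nonzero central charge.
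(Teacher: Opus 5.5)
Your treatment of stable objects follows the paper's proof. You split into $\Im Z_{3,2}(T)>0$, handled by Lemma \ref{lemma:pfmain_presupp} and Lemma \ref{lemma:stab_quad}, and $\Im Z_{3,2}(T)=0$, handled by the phase-one classification, Clifford and Lemma \ref{lemma:genfour_slope3}. Your check that $Q(1,3,2)=-0.1$ on $\ker Z_{3,2}$ is a correct addition.

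\textbf{The $\eta$-perturbation.} It runs the wrong way. You rightly notice that Lemma \ref{lemma:stab_quad} needs a strict inequality, while Corollary \ref{cor:genfour_bound} is only non-strict; the paper glosses over this. But $t=0.1+\delta+\eta$ turns the hypothesis into $(x-3)^2+1.9-\eta>\Phi_C(x)$. That is strictly stronger than the corollary and does not follow from it. There are two fixes.
\begin{enumerate}
\item Take $t=0.1+\delta-\eta$. The hypothesis $(x-3)^2+1.9+\eta>\Phi_C(x)$ then does follow from the corollary. You get $Q(T)+\eta r^2\geq 0$ for all small $\eta>0$, and letting $\eta\to 0$ gives $Q(T)\geq 0$.
\item Use the piecewise bounds of Proposition \ref{prop:genfour_bound}, extended to $x>3$ via $\Phi_C(x)=\Phi_C(6-x)+x-3$. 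They stay at least $7/80$ below $(x-3)^2+1.9$, with the worst point at $x=13/4$. So $t=0.1+\delta$ already satisfies the strict hypothesis, which is what the paper tacitly uses.
\end{enumerate}

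\textbf{The semistable formulation.} Requiring $Q\geq 0$ for all $\sigma_{3,2}$-semistable objects with nonzero charge is false here, so no cone argument can rescue your first attempt. Let $S$ be a phase-one stable object with $Z_{3,2}(S)\neq 0$. Then $S\oplus\afrak(L_1)[1]^{\oplus k}$ lies in $\A(3)$ with $\Im Z_{3,2}=0$, so it is $\sigma_{3,2}$-semistable with $Z_{3,2}=Z_{3,2}(S)\neq 0$. However, $Q$ of its class is a quadratic polynomial in $k$ with leading coefficient $Q(1,3,2)=-0.1$, so it is negative for $k\gg 0$. Negative definiteness on $\ker Z_{3,2}$ is exactly what breaks the inequality once massless Jordan--H\"older factors are allowed.

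What the paper proves, and what the statement means for this weak stability condition, is $Q(\vbf(T))\geq 0$ for $\sigma_{3,2}$-\emph{stable} $T$ with $Z_{3,2}(T)\neq 0$. By your convex-cone remark this extends to semistable objects with no massless factors. That is your fallback: commit to it and drop the reduction from arbitrary semistable objects. With that and the sign of $\eta$ corrected, your argument is complete.
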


\begin{proof}
We need to show that if $T \in \A(3)$ is a $\sigma_{3, 2}$-stable object with $Z_{3, 2}(T) \neq 0$, then $Q(\vbf(T)) \geq 0$. We divide into two cases.

First, assume that $\Im Z_{3, 2}(T)>0$. In this case, we proved in Lemma \ref{lemma:pfmain_presupp} that $T$ is $\sigma_{3, w}$-stable for some $w>2$; and $\Im Z_{3, w}(T)= \Im Z_{3, 2}(T)>0$. The result then follows by Lemma \ref{lemma:stab_quad} and Corollary \ref{cor:genfour_bound}.

Second, assume that $\Im Z_{3, 2}(T) =0$. In this case either $T=(\C, 0; 0)[1]$, or $T=(H^0(C, E), E; \ev_E)$ for some stable vector bundle $E$ of slope $\mu(E)=3$. The first case is clear. For the second one, note that either $\rbf(T)=1$ and $\nbf(T) \leq 2$ (as $Z_{3, 2}(E) \neq 0$), or $\rbf(T)\geq 2$. The first case is direct, and the second one follows from Lemma \ref{lemma:genfour_slope3}.
\end{proof}

\subsubsection*{Step 2: Descent to $\overline{\sigma}_{3, 2}$}

The last remaining part to conclude the proof is to show that $\sigma_{3, 2}$ descends to a stability condition on $\Ku(Y) = \DC^b(\T_C)/\K$, where $\K=\langle \afrak(L_1), \afrak(L_2) \rangle$ is the category generated by the BN-exceptional objects associated to the two trigonal line bundles $L_1, L_2$ on $C$.

First of all, let us note that the natural map $K(\T_C) \to \Z^3$ induced by the numerical Grothendieck group induces a map
\[ K(\DC^b(\T_C)/\K) \to \Z^3/\vbf(\afrak(L_i)) = \Z^3/(1, 3, 2)\cong \Z^2. \]
Moreover, it is clear that the central charge $Z_{3, 2}$ descends to a central charge $\overline{Z}_{3, 2}\colon \Z^3/(1, 3, 2) \to \C$, as $Z_{3, 2}(1, 3, 2)=0$.

The next step is to construct a heart on $\DC^b(\T_C)/\K$. To do so, the basic idea is to descend the heart $\A(3)$.

\begin{lemma} \label{lemma:pfmain_heart}
The category $\K \cap \A(3)$ is a Serre subcategory of $\A(3)$. Thus, the quotient $\A(3)/(\K \cap \A(3))$ is an abelian category, and the heart of a bounded t-structure on $\DC^b(\T_C)/\K$.
\end{lemma}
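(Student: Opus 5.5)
The plan is to identify $\K \cap \A(3)$ with the subcategory of objects of $\A(3)$ killed by the weak central charge, namely
\[ \A_0 := \{ T \in \A(3) : Z_{3,2}(T) = 0 \}. \]
Once this identification is made, the Serre property follows formally. Take a short exact sequence $0 \to A \to T \to B \to 0$ in $\A(3)$. Then $Z_{3,2}(T) = Z_{3,2}(A) + Z_{3,2}(B)$. By Step 1, both summands lie in $\{\Im > 0\} \cup \{\Im = 0,\ \Re \leq 0\}$. Hence $Z_{3,2}(T) = 0$ holds if and only if $Z_{3,2}(A) = Z_{3,2}(B) = 0$. So $\A_0$ is closed under subobjects, quotients and extensions.

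\emph{Step 1: $\K \cap \A(3) \subseteq \A_0$.} Each line bundle $L_i$ has slope $3 \leq b$, so $\afrak(L_i) \in \Ftors^3$ and $\afrak(L_i)[1] \in \A(3)$. Both $\afrak(L_i)$ have class $\vbf = (1,3,2)$. Consequently every object of $\K$ has class a multiple of $(1,3,2)$, and $Z_{3,2}(1,3,2) = 0$. This gives the inclusion.

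\emph{Step 2: $\A_0 \subseteq \K$.} Let $T \in \A_0$. Then $\Im Z_{3,w}(T) = 0$ for every $w$, so for any fixed $w > 3$ the object $T$ is $\sigma_{3,w}$-semistable of phase $1$. It therefore has a Jordan--Hölder filtration whose factors are the phase-$1$ stable objects listed in Step 1, namely shifts of $(\C,0;0)$ and of $(H^0(C,E),E;\ev_E)$ with $E$ stable of slope $3$.

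These factors lie in the $\Im = 0$ part of $\A(3)$, so each has $\Re Z_{3,2} \leq 0$. Since the values of $Z_{3,2}$ on the factors sum to $Z_{3,2}(T) = 0$, every factor must have $Z_{3,2} = 0$. The object $(\C,0;0)[1]$ has $Z_{3,2} \neq 0$, so it cannot occur. A factor coming from $E$ has $Z_{3,2} = 0$ exactly when $h^0(C,E) = 2\rk E$. By Lemma \ref{lemma:genfour_slope3} this forces $\rk E = 1$, so $E$ is a degree $3$ line bundle with $h^0 = 2$. By the lemma in Section \ref{sec:genfour}, $E \cong L_1$ or $E \cong L_2$. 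Thus every factor is some $\afrak(L_i)[1]$, and $T$ is an iterated extension of these, so $T \in \K$.

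\emph{Step 3: descent of the heart.} Let $\D' \subseteq \DC^b(\T_C)$ consist of the objects whose $\A(3)$-cohomology objects all lie in $\A_0$. Because $\A_0$ is Serre, the long exact cohomology sequence shows that $\D'$ is a thick triangulated subcategory. It contains $\afrak(L_i)$, hence contains $\K$. Conversely, each object of $\D'$ is built from its cohomologies, which are objects of $\A_0 \subseteq \K$ by Step 2, so $\D' \subseteq \K$ and therefore $\D' = \K$.

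Consequently the bounded t-structure with heart $\A(3)$ restricts to $\K$, with heart $\A_0 = \K \cap \A(3)$. At this point I would invoke the standard result (as used in \cite{Cho24}) that a bounded t-structure restricting to a thick subcategory $\K$ induces a bounded t-structure on the Verdier quotient, with heart the Serre quotient $\A(3)/\A_0$. I expect this last step to be the main obstacle. The citation must apply in the needed generality; otherwise one checks directly that the quotient functor is t-exact and that the induced heart is exactly the Serre quotient.
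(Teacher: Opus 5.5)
Your argument is correct, but it is organized differently from the paper's. The paper's proof is a reduction to the degeneration framework. It states, without further justification, that $\K$ consists exactly of the objects whose $\sigma_{3,w}$-mass tends to $0$ as $w \to 2^+$, and then defers to the argument of \cite{Bol23}*{Proposition 5.5} for such massless subcategories.

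You instead stay inside the limiting heart. You identify $\K \cap \A(3)$ with $\ker Z_{3,2} \cap \A(3)$, so the Serre property becomes a formal consequence of the weak positivity of $Z_{3,2}$ on $\A(3)$. You then show that $\K$ is exactly the class of objects whose $\A(3)$-cohomology lies in this kernel. Hence $\K$ is stable under the $\A(3)$-truncations, and the standard lemma on Verdier quotients by thick subcategories compatible with a t-structure applies directly.

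The two routes are closely linked. Since $Z_{3,w}(T) = Z_{3,2}(T) + (w-2)\rbf(T)$, an object of $\A(3)$ has mass tending to zero precisely when $Z_{3,2}(T)=0$. So your Step 2 is exactly the Brill--Noether verification hidden in the paper's ``key observation''. That step uses the phase-one Jordan--H\"older factors, Lemma \ref{lemma:genfour_slope3}, and the fact that a degree-three line bundle with two sections on a non-hyperelliptic curve is a base-point-free $g^1_3$, hence $L_1$ or $L_2$. Your route is longer, but it is self-contained and pinpoints where the genus-four geometry enters. The paper's route is shorter and places the lemma within the general picture of \cite{BPPW22} and \cite{Bol23}.

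Two minor points:
\begin{enumerate}
\item In your Step 1, the claim that every object of $\K$ has class in $\Z(1,3,2)$ is only immediate for the triangulated hull, not the thick hull. You do not need it, since Step 3 already gives $\K \subseteq \D'$ and hence $\K \cap \A(3) \subseteq \A_0$.
\item Following a typo in the paper, you wrote $(\C,0;0)[1]$. The phase-one stable objects of $\A(3)$ are actually $(\C,0;0)$ and $(H^0(C,E),E;\ev_E)[1]$. This does not affect your argument.
\end{enumerate}
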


\begin{proof}
The key observation is that $\K= \{ T \in \DC^b(T_C): \lim_{w \to 2^+}m_{\sigma_{3, w}}(T) =0 \}$. This way, the argument of \cite{Bol23}*{Proposition 5.5} applies almost verbatim.
\end{proof}

This way, we define $\overline{\sigma}_{3, 2} \in \Stab(\DC^b(\T_C)/\K)$ via $\overline{\sigma}_{3, 2} = (\overline{Z}_{3, 2}, \A(3)/(\K \cap \A(3)))$. The support property is immediate as our lattice has rank two. This gives us a Bridgeland stability condition on $\Ku(Y)$.

\begin{remark}
Note that one could have use the main result of \cite{Bol23} directly to construct the Bridgeland stability condition on $\Ku(Y)$. In fact, the \emph{limiting support property} of \cite{Bol23}*{Definition 4.3} follows directly by the proof of Corollary \ref{cor:pfmain_supp}.
\end{remark}

\section{A moduli space} \label{sec:moduli}

In this section we will prove Corollary \ref{cor:intro_moduli}. Let $\vbf=\vbf((\C, \O_C(p+q); \id)[1])=(-1, -2, -1)$. For each $w>2$, we let $M_w=M_{\sigma_{3, w}}(\vbf)$ be the moduli space of $\sigma_{3, w}$-semistable objects with numerical class $\sigma_{3, w}$.

\begin{lemma} \label{lemma:moduli_stable}
For each $w>2$ and each $p, q \in C$, the object $(\C, \O_C(p+q); \id)[1] \in \A(3)$ is $\sigma_{3, w}$-stable. 
\end{lemma}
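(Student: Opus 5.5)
The plan is to use that $\Im Z_{3,w}(T)$ is the smallest possible positive value on $\A(3)$. Then any destabilizing subobject would have to lie in phase $1$, and I will rule such subobjects out by a cohomological argument with respect to the torsion pair $(\Ttors^3,\Ftors^3)$.

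\emph{Membership in $\A(3)$.} Write $T_0=(\C,\O_C(p+q);s)$, where $s$ is the section vanishing on $p+q$, so that $T=T_0[1]$.

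I first check that $T_0$ is $\mu$-semistable of slope $2$. A nonzero subobject of $T_0$ in $\T_C$ has the form $(V',E';\phi')$ with $E'\subseteq \O_C(p+q)$ and $V'\subseteq \C$.
\begin{itemize}
\item If $V'=0$, then either $E'=0$ (so the subobject is zero) or $E'$ has rank $1$ and degree at most $2$, so its slope is $\le 2$.
\item If $V'=\C$, then $E'$ must contain the image $\O_C\subseteq\O_C(p+q)$ of $s$. So again $\rk E'=1$ and $\deg E'\le 2$.
\end{itemize}
Note that $(\C,0;0)$ is not a subobject, since $s\neq 0$. Hence $T_0\in \Ftors^3$ and $T\in\A(3)$.

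\emph{Reduction to phase-one subobjects.} Next I compute $Z_{3,w}(T)=(1-w)+i$. Since $\Im Z_{3,w}=\dbf-3\rbf$ takes integer values that are $\ge 0$ on $\A(3)$, any short exact sequence $0\to A\to T\to B\to 0$ in $\A(3)$ with $A,B\neq 0$ has $\{\Im Z(A),\Im Z(B)\}=\{0,1\}$.

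If $\Im Z(A)=1$ and $\Im Z(B)=0$, then $Z(B)\neq 0$ because $\sigma_{3,w}$ is a stability condition. So $B$ has phase $1$, which is larger than the phase of $T$, and this sequence does not destabilize $T$.

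It therefore suffices to show that there is no nonzero subobject $A\hookrightarrow T$ in $\A(3)$ with $\Im Z(A)=0$.

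\emph{Excluding such $A$.} Take the long exact cohomology sequence (with respect to $\T_C$) of $0\to A\to T\to B\to 0$, using $H^{-1}(T)=T_0$ and $H^0(T)=0$. This gives an injection $H^{-1}(A)\hookrightarrow T_0$.

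Every nonzero subobject of $T_0$ has $\dbf-3\rbf<0$, by the slope computation above. Also $H^0(A)\in\Ttors^3$ contributes $\ge 0$ to $\Im Z$. Since
\[\Im Z(A)=\Im Z(H^0(A))-(\dbf-3\rbf)(H^{-1}(A)),\]
the condition $\Im Z(A)=0$ forces $H^{-1}(A)=0$.

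So $A\in\Ttors^3$, and the long exact sequence becomes $0\to T_0\to H^{-1}(B)\to A\to 0$, with $H^0(B)=0$. Now $H^{-1}(B)\in\Ftors^3$ and $A\in \Ttors^3$. Since $\Hom(\Ftors^3,\Ttors^3)=0$, the surjection $H^{-1}(B)\to A$ vanishes, so $A=0$.

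This shows that $T$ has no destabilizing subobjects, so $T$ is $\sigma_{3,w}$-stable.

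\emph{Main point to check.} The step needing the most care is the bookkeeping in the cohomology sequence: that $H^{-1}$ of a subobject in the tilted heart injects into $H^{-1}(T)$, and that subobjects of $T_0$ strictly decrease $\dbf-3\rbf$. Both follow from the tilt structure and from the slope check in the first step. Notably, the argument does not use the Brill--Noether bounds and works uniformly for all $w>2$.
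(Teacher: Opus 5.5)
\textbf{There is a genuine gap, in the last step.} Everything before it is correct. Integrality of $\Im Z_{3,w}$ reduces you to excluding a nonzero $A\hookrightarrow T$ with $\Im Z(A)=0$. You then correctly obtain $H^{-1}(A)=0$, which leaves a short exact sequence $0\to T_0\to F\to A\to 0$ in $\T_C$ with $F:=H^{-1}(B)\in\Ftors^3$ and $A\in\Ttors^3$.

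The vanishing you invoke next is the wrong way round. A torsion pair $(\Ttors^3,\Ftors^3)$ gives $\Hom(\Ttors^3,\Ftors^3)=0$, whereas $\Hom(\Ftors^3,\Ttors^3)$ is in general nonzero. Already the projection $T_0\to(\C,0;0)$ is a nonzero map from an object of $\Ftors^3$ to one of $\Ttors^3$. So the case you need to exclude is still open, and it cannot be closed by formal arguments. Suppose $C$ were hyperelliptic and $\O_C(p+q)$ were the $g^1_2$. Then $F=(H^0(\O_C(p+q)),\O_C(p+q);\ev)$ is $\mu$-semistable of slope $2$ and contains $T_0$ with quotient $(\C,0;0)$. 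The sequence $0\to(\C,0;0)\to T\to F[1]\to 0$ in $\A(3)$ then destabilizes $T$ for every $w$ at which $\sigma_{3,w}$ is defined. In particular, your closing remark that the argument needs no input from the curve is not right.

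\textbf{The missing input is non-hyperellipticity, i.e.\ $h^0(\O_C(p+q))=1$.} Each $\mu$-HN factor of $A\in\Ttors^3$ has $\dbf-3\rbf\ge 0$, with equality only when its sheaf part vanishes. Hence $\Im Z(A)=0$ forces $A\cong(\C^m,0;0)$ with $m\ge 1$, and so $F\cong(\C^{1+m},\O_C(p+q);\phi)$. The kernel $K$ of the induced map $\C^{1+m}\to H^0(\O_C(p+q))\cong\C$ is therefore nonzero. Then $(K,0;0)\subseteq F$ is a nonzero subobject of slope $+\infty$, which contradicts $F\in\Ftors^3$; this uses the correct vanishing $\Hom(\Ttors^3,\Ftors^3)=0$.

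With this repair, your route is a clean alternative to the paper's. The paper splits according to whether $j^!(H^{-1}(S))$ is $0$ or $\O_C(p+q)$ and compares slopes of the sheaf parts; you use integrality of $\Im Z_{3,w}$ instead. Note that the paper's case (2) contains the same configuration. Its inequalities $(d+2)/(r+1)\le 3<d/r$ also allow $r=d=0$, i.e.\ $S\cong(\C^m,0;0)$, which does not satisfy $d=3r+1$. So the same non-hyperelliptic input is needed to complete that argument as well.
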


\begin{proof}
Assume that there exists a destabilizing sequence
\begin{equation} \label{eq:moduli_destab}
0 \to S \to (\C, \O_C(p+q); \id)[1] \to T \to 0
\end{equation}
in $\A(3)$. Here $T=H^{-1}(T)[1]$, and we have an exact sequence
\[ 0 \to H^{-1}(S) \to (\C, \O_C(p+q); \id) \to H^{-1}(T) \to H^0(S) \to 0 \]
in $\T_C$. Note that $j^!(H^{-1}(S)) = 0$ or $\O_C(p+q)$, as otherwise $j^!(H^{-1}(T))$ will have a torsion subsheaf.
\begin{enumerate}
\item If $j^!(H^{-1}(S))=\O_C(p+q)$, then $j^!(H^{-1}(T)) = j^!(H^0(S))=0$, by the construction of the torsion pair $\Ttors^3, \Ftors^3$. This forces $T=0$, and so \eqref{eq:moduli_destab} does not destabilize. 
\item If $j^!(H^{-1}(S))=0$, then we have a short exact sequence
\[ 0 \to \O_C(p+q) \to j^!(H^{-1}(T)) \to j^!(H^0(S)) \to 0. \]
Write $r=\rk j^!(H^0(S))$, $d=\deg j^!(H^0(S))$, so that $j^!(H^{-1}(T))$ has degree $d+2$ and rank $r+1$. This implies $(d+2)/(r+1)\leq 3 < d/r$ by the construction of the torsion pair, and so $d=3r+1$ by an elementary argument. But then $S$ has phase 1, and so \eqref{eq:moduli_destab} does not destabilize. \qedhere
\end{enumerate}
\end{proof}

\begin{cor}
For every $w>2$, there is an irreducible component of $M_w$ isomorphic to $\Sym^2(C)$
\end{cor}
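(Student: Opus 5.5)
We will construct a family over $\Sym^2(C)$, show the induced map to $M_w$ is injective, and show its image is open and closed. Let $D \subset \Sym^2(C) \times C$ be the universal divisor, and let $p_1$ denote the projection to $\Sym^2(C)$. Consider the family of coherent systems
\[ \mathcal{T} = (\O_{\Sym^2(C)}, \O(D); s_D)[1] \]
over $\Sym^2(C)$, where $s_D$ is the canonical section. This is a flat family of objects of $\DC^b(\T_C)$. Its fibre over $p+q$ is $(\C, \O_C(p+q); \id)[1]$, where $\id$ is identified with the canonical section. By Lemma \ref{lemma:moduli_stable} each fibre is $\sigma_{3,w}$-stable of class $\vbf$. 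Hence $\mathcal{T}$ induces a morphism $f\colon \Sym^2(C) \to M_w$.

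\emph{Injectivity.} A nonzero morphism between two stable objects of the same phase is an isomorphism. So it suffices to recover $p+q$ from the object $(\C, \O_C(p+q); \id)[1]$. Applying $j^!$ gives $\O_C(p+q)[1]$, and the map $\C \to H^0(\O_C(p+q))$ picks out the section vanishing exactly on $p+q$. Since $C$ is not hyperelliptic, $h^0(\O_C(p+q)) = 1$, so this section, and hence the divisor, is unique. Thus $f$ is injective on points. To upgrade to a closed immersion, we check injectivity on tangent spaces. We compute $\Ext^1$ of $T = (\C, \O_C(p+q); \id)$ with itself using Lemma \ref{lemma:aug_Exts}:
\[ 0 \to \Hom(T,T) \to \C \oplus \C \to H^0(\O_C(p+q)) = \C \to \Ext^1(T,T) \to H^1(\O_C) \to H^1(\O_C(p+q)) \to \Ext^2(T,T) \to 0. \]
Here $\Hom(T,T) = \C$ and the middle map $\C \oplus \C \to \C$ is surjective. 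The map $H^1(\O_C) \to H^1(\O_C(p+q))$ is surjective with $2$-dimensional kernel, because $h^1(\O_C(p+q)) = 4 - 2 + 1 = 3$ by Riemann--Roch. This gives $\dim \Ext^1(T,T) = 2$ and $\Ext^2(T,T) = 0$. In particular, $M_w$ is smooth of dimension $2$ at these points. The Kodaira--Spencer map of the family $\mathcal{T}$ is the natural map $T_{p+q}\Sym^2(C) = H^0(\O_D(D)) \to \Ext^1(T,T)$. It is compatible with the above sequence, so it is injective. Hence $f$ is an injective étale map onto an open subset.

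\emph{Open and closed.} Openness follows from étaleness and the equality of dimensions. Closedness follows because $\Sym^2(C)$ is proper. Therefore $f$ identifies $\Sym^2(C)$ with a connected component of $M_w$, which is in particular an irreducible component.

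The main obstacle is making the deformation theory rigorous. One needs $M_w$ to exist as an algebraic space whose tangent space is $\Ext^1$. I would obtain this from the general existence results for moduli of Bridgeland semistable objects with respect to stability conditions satisfying the support property, which applies here by Lemma \ref{lemma:stab_quad}. One also has to verify that the Kodaira--Spencer map is really identified with $H^0(\O_D(D)) \to \Ext^1(T,T)$ via the long exact sequence. If this identification turns out to be delicate, a fallback is to observe that $M_w$ is smooth of dimension $2$ at every point of the image. An injective morphism from a proper normal surface onto an open subset of a smooth surface is then an isomorphism onto that subset by Zariski's main theorem.
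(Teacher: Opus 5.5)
Your proposal is correct and follows essentially the paper's argument: the map is well defined by Lemma \ref{lemma:moduli_stable} and injective on closed points, $M_w$ is smooth of dimension $2$ at these objects because the Ext sequence of Lemma \ref{lemma:aug_Exts} shows they are simple with $\Ext^2=0$, and the map is an isomorphism on tangent spaces; your explicit family and open-and-closed argument fill in details the paper leaves implicit. One arithmetic slip: since $h^0(\O_C(p+q))=1$, Riemann--Roch gives $h^1(\O_C(p+q)) = 1-(2+1-4) = 2$, not $3$, and it is this value that makes the kernel of the surjection $H^1(\O_C)\to H^1(\O_C(p+q))$ two-dimensional, as you claim.
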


\begin{proof}
Note that $\Sym^2(C) \to M_w$ defined via $p+q \mapsto (\C, \O_C(p+q); \id)[1]$ is well defined by Lemma \ref{lemma:moduli_stable}, and injective on closed points as $C$ is not hyperelliptic. A fast computation using Lemma \ref{lemma:aug_functors} shows that the moduli space $M_w$ is smooth of dimension 2 at each $(\C, \O_C(p+q); \id)[1]$ (as these objects are simple and with vanishing $\Ext^2$), and that the map at the level of tangent spaces from $\Sym^2(C)$ to $M_w$ is an isomorphism.     
\end{proof}

A natural question is to see what happens when $w=2$. We will not discuss the subtleties of defining the moduli space $M_2$ and $\overline{M}_2$ of objects on $\A(3)$, resp. $\overline{\A}(3)$. Instead, we will content ourselves with the following observation.

\begin{remark}
Let $r \in C$ be a point. Recall that $C \subset \P^3$ lies in a quadric surface; thus, there are two lines passing through $r$ contained in the quadric surface. In other words, we have isomorphisms $L_i \cong \O_C(p_i+q_i+r)$ for some $p_1+q_1, p_2+q_2 \in \Sym^2(C)$. This way, we get the short exact sequence
\[ 0 \to (\C, \O_r; \id) \to (\C, \O_C(p_i+q_i); \id)[1] \to \afrak(L_i)[1] \to 0 \]
in $\A(3)$. Thus, in $\Ku(Y)$ we get the isomorphisms
\[ (\C, \O_C(p_1+q_1); \id)[1] \cong (\C, \O_r; \id) \cong (\C, \O_C(p_2+q_2); \id)[1]. \]

It is easy to see that these are the only objects parametrized by $M_2$ that get identified in $\Ku(Y)$. In fact, assume that $T_j=(\C, \O_C(p_j+q_j); \ev)[1]$, $j=1,2$ are two objects in $\A(3)$, whose images are isomorphic in $\overline{\A(3)}$. Assume that there is no $r \in C$ and $i=1, 2$ such that $\O_C(p_j+q_j+r)\cong L_i$. We claim that $T_1 \cong T_2$ in $\A(3)$.

By Lemma \ref{lemma:pfmain_heart}, we get that there exists some $C \in \A(3)$ and morphisms $f_j\colon C \to T_j$ such that $\ker f_j, \coker f_j \in \K \cap \A(3)$. We immediately get that the $f_j$ are surjective, and that $\ker f_j$ are an iterated extension of $\afrak(L_i)[1]$. The fact that $\Hom(\afrak(L_i)[1], T_j)=0$ gives us that $\ker f_1 = \ker f_2$, from where the result follows. Small modifications apply for the objects $(\C, \O_r; \id)$.

We point out that the objects identified in $M_2 \to \overline{M}_2$ are exactly the identifications that relate $\Sym^2(C)$ with the Fano variety of lines of the associated nodal cubic threefold, cf. \citelist{\cite{vdGK10}*{p. 29} \cite{Huy23}*{Remark 5.5.2}}.
\end{remark}

\section{Questions} \label{sec:questions}

Let us finish our exposition with two future directions.
\begin{enumerate}
\item Recall that for a smooth cubic threefold $Y$ there are two ways of constructing a Bridgeland stability condition on $\Ku(Y)$: using the techniques of \cite{BLMS23}, or by relating $\Ku(Y)$ with the derived category of a conic fibration (cf. \cite{BMMS12}). The main result of \cite{FP23} ensures that these two stability conditions lie in the same $\tilde{\GL}_2^+$ orbit. 

It would be interesting to adapt the constructions above to a nodal cubic threefold, and to compare them with the stability condition from Theorem \ref{teo:intro_stab}. We point out that the results of \cite{FP23} do not apply directly to this situation, as the category $\Ku(Y)$ is not proper if $Y$ is singular.

\item As we mentioned in Remark \ref{remark:sod_crepant}, the main result of \cite{AK25}*{App. A} proves that the category $\DC^b(\T_C)$ admits two semi-orthogonal decompositions
\[ \DC^b(\T_C) = \langle \afrak(L_1), \K_1 \rangle = \langle \afrak(L_2), \K_2 \rangle; \]
and each $\K_i$ provides a \emph{crepant} categorical resolution of $\Ku(Y)$. It would be interesting to see whether the categories $\K_i$ admit stability conditions. Compare this to the introduction of \cite{Sun25}, where similar categories are expected to \emph{not} admit Bridgeland stability conditions.
\end{enumerate}

\bibliography{Cohgenfour}
\bibliographystyle{plain}

\end{document}